\newcommand{\bh}[1] {\mathcal{B}(\mathcal{#1})}
\newcommand{\pref}[1] {(\ref{#1})}
\newcommand{\lspan} {\operatorname{span}}
\newcommand*{\medcup}{\mathbin{\scalebox{1.2}{\ensuremath{\cup}}}}%
\newcommand{\sminus}{\scalebox{0.5}[1.0]{\( - \)}}
\newtheorem{theorem}{Theorem}[section]
\newtheorem{corollary}[theorem]{Corollary}
\newtheorem{proposition}[theorem]{Proposition}    
\newtheorem{lemma}[theorem]{Lemma}
\theoremstyle{definition}
\newtheorem{definition}[theorem]{Definition}
\newtheorem{example}[theorem]{Example}
\newtheorem{remark}[theorem]{Remark}
\numberwithin{equation}{section}
\begin{document}

\title[Regular Dilation and Nica-covaraint Representations]{Regular Dilation and Nica-covariant Representation on Right LCM Semigroups}

\author{Boyu Li}
\address{Pure Mathematics Department\\University of Waterloo\\Waterloo, ON\\Canada \ N2L--3G1}
\email{b32li@math.uwaterloo.ca}
\date{\today}

\subjclass[2010]{43A35 ,47A20 ,20F36}
\keywords{regular dilation, Nica covariant, Artin monoid, right LCM semigroup, graph product}

\begin{abstract} Regular dilation has recently been extended to graph product of $\mathbb{N}$, where having a $\ast$-regular dilation is equivalent to having a minimal isometric Nica-covariant dilation. In this paper, we first extend the result to right LCM semigroups, and establish a similar equivalence among $\ast$-regular dilation, minimal isometric Nica-covariant dilation, and a Brehmer-type condition. This result can be applied to various semigroups to establish conditions for $\ast$-regular dilation.
\end{abstract}

\maketitle

\section{Introduction}

Regular dilation is a special type of dilation result first studied by Brehmer in \cite{Brehmer1961}, as a generalization to the celebrated Sz.Nagy's dilation. It has since been studied by many authors \cite{Halperin1962, SFBook, Gaspar1997} and has been generalized to product systems \cite{Solel2008, Shalit2010} and lattice ordered semigroups \cite{BLi2014}. Recently, the author further generalized it to graph products of $\mathbb{N}$ \cite{Li2017}, which is a special class of quasi-lattice ordered groups \cite{CrispLaca2002,CrispLaca2007}. It has now become evident that there is a connection between regular dilation, and Nica-covariant representation. 

Isometric Nica-covariant representations were first studied by Nica \cite{Nica1992} on quasi-lattice ordered semigroups where he studied its $C^*$-algebra as a natural generalization of the Toeplitz-Cuntz algebra. It has been intensively studied since then\cite{LacaRaeburn1996, CrispLaca2002} and has been generalized to other classes of semigroups \cite{XLi2012}.

This paper fully characterizes all representations of right LCM semigroups that are $\ast$-regular by establishing a Brehmer type condition. Our result unifies many prior results on regular dilation. This includes Brehmer's theorem, Frazho-Bunce-Popescu's dilation of row contractions, regular dilation on lattice ordered semigroups, and regular dilation on graph products of $\mathbb{N}$. 

In Section \ref{sec.background}, we briefly go over the background of right LCM semigroups and Nica-covariant representations. In Section \ref{sec.regular}, we first extend the definition of $\ast$-regular dilation to the context of right LCM semigroups. We then establish the equivalence among having a $\ast$-regular dilation, having a minimal isometric Nica-covariant dilation, and a Brehmer type condition. The Brehmer type condition we obtain in Section \ref{sec.regular} requires for every finite subset $F$ of a right LCM semigroup $P$, certain operator $Z(F)\geq 0$. We would like to reduce it to a much smaller collection of finite subsets. 

In Section \ref{sec.desc}, we set up a few technical lemmas that help us reduce the positivity of the operator $Z(F)$ to other subsets. We then define the descending chain condition on right LCM semigroups, and showed that it suffices to verify the Brehmer type condition for all finite subset $F$ of the set of minimal elements when the right LCM semigroup satisfies the descending chain condition. 

Many well studied right LCM semigroups satisfy the descending chain condition. This includes the Artin monoids, the Thompson's monoid, $\mathbb{N}\rtimes\mathbb{N}^\times$, and the Baumslag-Solitar monoids. We derive their corresponding Brehmer type condition in Section \ref{sec.examples}.

Finally, in Section \ref{sec.gp}, we study representations of graph products of right LCM semigroups. We used the techniques that were developed in Section \ref{sec.desc} to reduce the Brehmer type condition in this context. We also consider an application on doubly commuting representations of direct sums of right LCM semigroups. 

\section{Background}\label{sec.background}

\subsection{Semigroups} 

A semigroup is a set with an associative multiplication. For our purpose, a semigroup $P$ is always unital, meaning that there exists $e\in P$ so that $x=xe=ex$ for all $x\in P$. Recently, there is a lot of research interest on the study of $C^*$-algebras associated with left cancellative semigroups \cite{XLi2016}. 

\begin{definition} A semigroup $P$ is called left cancellative if for any $p,a,b\in P$ with $pa=pb$, we have $a=b$. 
\end{definition} 

In this paper, we focus on a special class of left-cancellative semigroups that are called the right LCM semigroups. 

\begin{definition}
A unital semigroup $P$ is called right LCM if it is left cancellative and for any $p,q\in P$, either $pP\cap qP=rP$ for some $r\in P$ or $pP\cap qP=\emptyset$. 

We denote $P^\ast$ be the set of invertible elements in $P$. 
\end{definition} 

In the case when $pP\cap qP=rP$, we can treat $r$ as a least common right multiple of $p,q$ (and hence the name right LCM). There might be many such least common multiples, but it is clear that if $r,r'$ are both least common multiples of $p,q$, then there exists an invertible $u$ with $r\cdot u=r'$. For each $p,q\in P$, let us denote $p\vee q=\{r: pP\cap qP=rP\}$. Similarly, for a finite subset $F\subset P$, let $\vee F=\{r: \bigcap_{x\in F} xP = rP\}$. Notice that $\bigcap_{x\in F} xP=\emptyset$ if and only if $\vee F=\emptyset$.

Right LCM semigroups are considered a natural generalization of the well studied quasi-lattice ordered groups. Quasi-lattice ordered groups were first defined by Nica in \cite{Nica1992}, where he studied isometric covariant representations and their $C^*$-algebras. These representations are now known as isometric Nica-covariant representations, and they have been intensively studied since then \cite{LacaRaeburn1996, CrispLaca2002, CrispLaca2007, Starling2015}. 

Given a group $G$ and a unital semigroup $P\subseteq G$ with $P\cap P^{-1}=\{e\}$, the semigroup $P$ defines a partial order on $G$ via $x\leq y$ if $x^{-1}y\in P$. In other words, $x\leq y$ if there exists $p\in P$ with $y=xp$. One can check that $x\leq y$ if and only if $yP\subseteq xP$. This also defines a partial order on the semigroup $P$. Dually, the semigroup $P$ also defines a partial order $x\leq_r y$ if $yx^{-1} \in P$. Notice that for a right LCM semigroup $P$, we can similarly define an order $x\leq y$ if $x^{-1}y\in P$. However, this order is only a pre-order on the semigroup $P$ since $x\leq y\leq x$ implies $y=xu$ for some $u\in P^\ast$. 

\begin{definition} The partial order $\leq$ defined by $P$ on $G$ is called a quasi-lattice order if any finite set $F\subset G$ with an upper bound in $G$ has a least upper bound in $G$, denoted by $\vee F$. In this case, the pair $(G,P)$ is called a \emph{quasi-lattice ordered group}. We often refer to $P$ a quasi-lattice ordered semigroup. 
\end{definition}

In a quasi-lattice ordered group $(G,P)$, it is often convenient to add an element $\infty$ where $x\cdot \infty = \infty\cdot x = \infty$ for all $x\in G$. Then, $x\leq \infty$ for all $x\in G$. Therefore, whenever $F\subset G$ has no upper bound in $G$, we can denote $\vee F=\infty$. 

One may immediately notice that given a quasi-lattice ordered group $(G,P)$, the semigroup $P$ is right LCM since for any $x,y\in P$, either $x\vee y=z$ and $xP\cap yP=zP$, or $x\vee y=\infty$ and $xP\cap yP=\emptyset$. However, a right LCM semigroup is not necessarily a quasi-lattice ordered semigroup. Firstly, the set of invertible elements in a right LCM semigroup may not be $\{e\}$, in which case, there is no way to embed $P$ inside a group $G$. Secondly, even if $P^\ast=\{e\}$, it is often hard to check if one can embed $P$ inside a group $G$ (see for example, Artin monoids \cite{Paris2002}). Finally, even if $P$ embeds injectively inside a group $G$, it is hard to verify the partial order on $G$ defined by $P$ is a quasi-lattice order (though the partial order on $P$ is a `quasi-lattice order' due to the right LCM condition). 

\begin{example}\label{ex.quasi} The class of right LCM semigroups covers a wide range of examples.
\begin{enumerate}
\item $P$ is called an \emph{$\ell$-semigroup} (often called the lattice ordered semigroup \cite{LatticeOrderBookIntro}) if $P$ is normal inside $G$ so that every pair of elements in $G$ has a greatest lower bound and a least upper bound. In other words, the partial order on $G$ is a lattice order. A $\ell$-semigroup $P$ together with $G$ is always a quasi-lattice ordered group.
\item In \cite{CrispLaca2002}, there is another notion of \emph{lattice ordered group} where any pair of elements in $G$ has a least upper bound. It was shown in \cite[Lemma 27]{CrispLaca2002}, this is the same as $P$ being quasi-lattice ordered and $G=PP^{-1}$. This definition does not require the semigroup $P$ to be normal, and thus contains a wider class of examples. For example, we shall see that Artin monoids of finite type are important examples that fall under this class. Lattice ordered groups are quasi-lattice ordered.
\item\label{ex.quasi.1} $(\mathbb{Z}^k,\mathbb{N}^k)$ and the free group $(\mathbb{F}_k,\mathbb{F}_k^+)$ are both quasi-lattice ordered groups. Here, $(\mathbb{Z}^k,\mathbb{N}^k)$ is in fact an $\ell$-group, but the free group is not.
\item\label{ex.quasi.2} Given a simple graph $\Gamma$ on $k$ vertices, one can define $P_\Gamma$, the graph product associated with the graph to be the unital semigroup generated by $k$ generators where $e_i,e_j$ commute whenever there is an edge between the vertices $i,j$. This is also known as the graph product of $\mathbb{N}$. It is also called the right angled Artin monoid or the graph semigroup. It is a quasi-lattice ordered semigroup inside the group generated by the same set of generators. Notice that in the special case when the graph is the complete graph, $P_\Gamma$ is simply $\mathbb{N}^k$. When the graph contains no edge, $P_\Gamma$ is the free semigroup on $k$ generators. 
\item\label{ex.quasi.thompson} The Thompson's monoid is closely related to the well-known Thompson's group. There is a great interest in whether the Thompson's group is amenable or not. The Thompson's monoid can be written as $$F^+=\left<x_0,x_1,\cdots | x_nx_k=x_kx_{n+1}, k<n\right>.$$

The Thompson's monoid embeds injectively in the Thompson group, and it is a right LCM semigroup \cite{XLi2016} (it follows from the discussion after \cite[Lemma 6.32]{XLi2016} that every constructible right ideal of $F^+$ is principle and thus it has the right LCM property). 
\end{enumerate}
\end{example} 

An important class of right LCM semigroups that we shall focus on is the class of Artin monoids.

\begin{example}\label{ex.Artin} We first denote $\langle s,t\rangle_m=stst\cdots$, where we write $s,t$ alternatively for a total of $m$ times. For example, $\langle s,t\rangle_3=sts$.

Consider a symmetric $n\times n$ matrix $M$ where $m_{i,i}=1$ for all $i$, and $m_{i,j}\in\{2,\cdots,+\infty\}$ when $i\neq j$. One can define $A_M^+$, the Artin monoid associated with $M$ to be the unital semigroup generated by $e_1,\cdots,e_n$, where each $e_i,e_j$, $i\neq j$, satisfy the relation $\langle e_i, e_j\rangle_{m_{i,j}}=\langle e_j, e_i\rangle_{m_{i,j}}$. In particular, when $m_{i,j}=+\infty$, this means there is no relation between $e_i$ and $e_j$. One can similarly define the Artin group $A_M$ be the group generated by the same set of generators. 

The Artin monoid is said to be right-angled if each $m_{i,j}=2$ or $+\infty$ for all $i\neq j$. One may define a graph $\Gamma$ on $n$ vertices where $i,j$ are adjacent whenever $m_{i,j}=2$. The graph product associated with $\Gamma$ discussed in the Example \ref{ex.quasi} \pref{ex.quasi.2} is precisely the right-angled Artin monoid. 

The Artin monoid is said to be of finite type if each $m_{i,j}<\infty$. For example, if for all $i\neq j$, $m_{i,j}=3$ when $|i-j|=1$ and $m_{i,j}=2$ otherwise, then the Artin group is the familiar Braid group on $(n+1)$-strings. 

It is known that $(A_M, A_M^+)$ is a quasi-lattice ordered group when it is right angled or of finite type. In fact, these two cases are the only known examples to form a quasi-lattice ordered group \cite{CrispLaca2002}. However, it is known that the Artin monoid $A_M^+$ itself is always a right LCM semigroup (see \cite{XLi2016}). 
\end{example}

In \cite{BRRW}, it is shown that the Zappa-Sz\'{e}p product of semigroups provide a way to construct a rich class of right LCM semigroups. Let $A,U$ be two unital semigroup with identities $e_A,e_U$ respectively. Suppose there are two maps $A\times U\to U$ by $(a,u)\to a\cdot u$ and $A\times U\to A$ by $(a,u)\to a|_u$ that satisfy:
\begin{align*}
& (B1) e_A \cdot u = u;  & &(B5) a\cdot (uv)=(a\cdot u)(a|_u \cdot v);  \\
& (B2) (ab)\cdot u=a\cdot (b\cdot u);  & &(B6) a|_{uv}=(a|_u)|_v; \\
& (B3) a\cdot e_U = e_U; & &(B7) e_A|u=e_A; \\
& (B4) a|_{e_U}=a; & &(B8) (ab)|u=a|_{b\cdot u} b|_u. 
\end{align*}

Then the external Zappa-Sz\'{e}p product $U\bowtie A$ is the cartesian product $U\times A$ with multiplication defined by $$(u,a)(v,b)=(u(a\cdot v), (a|_v) b).$$

This allows us to build more right LCM semigroups from existing ones. 

\begin{lemma}[Lemma 3.3, \cite{BRRW}]\label{lm.zappa} Suppose $U,A$ are left cancellative semigroups with maps $(a,u)\to a\cdot u$ and $(a,u)\to a|_u$ that defines a Zappa-Sz\'{e}p product $U\bowtie A$. Suppose $U$ is a right LCM semigroup, and the set of constructible right ideals of $A$ is totally ordered by inclusion, and $u\to a\cdot u$ is a bijection from $U$ to $U$ for each $a\in A$. Then $U\bowtie A$ is a right LCM semigroup. 
\end{lemma}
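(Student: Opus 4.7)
The plan is to verify the two defining conditions of the right LCM property for $U\bowtie A$: left cancellativity, and that the intersection of any two principal right ideals is either empty or principal.

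For left cancellativity, suppose $(u,a)(v_1,b_1)=(u,a)(v_2,b_2)$. The first coordinates give $u(a\cdot v_1)=u(a\cdot v_2)$; left cancellation in $U$ yields $a\cdot v_1=a\cdot v_2$, and the bijectivity hypothesis forces $v_1=v_2$. The second coordinates then reduce to $(a|_{v_1})b_1=(a|_{v_1})b_2$, and left cancellation in $A$ yields $b_1=b_2$.

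For the LCM property, note that the first-coordinate projection of $(u_1,a_1)(U\bowtie A)$ is $u_1(a_1\cdot U)=u_1U$, again by bijectivity. If $u_1U\cap u_2U=\emptyset$ in the right LCM semigroup $U$, the intersection of principal right ideals is empty. Otherwise fix $r$ with $u_1U\cap u_2U=rU$, write $r=u_1p_1=u_2p_2$, and pick the unique $v_0,w_0\in U$ with $a_1\cdot v_0=p_1$ and $a_2\cdot w_0=p_2$. Set $\alpha:=a_1|_{v_0}$ and $\beta:=a_2|_{w_0}$, so that $(u_1,a_1)(v_0,e_A)=(r,\alpha)$ and $(u_2,a_2)(w_0,e_A)=(r,\beta)$. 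A reparametrization using (B5), (B6) and the bijectivity hypothesis then shows that any $(v,b)\in U\bowtie A$ with $u_1(a_1\cdot v)\in rU$ must satisfy $v=v_0s$ for a unique $s$, and that $(u_1,a_1)(v_0s,b)=(r,\alpha)(s,b)$. Consequently the restriction of the $(u_1,a_1)$-principal ideal to first coordinate in $rU$ equals $(r,\alpha)(U\bowtie A)$, and symmetrically on the $(u_2,a_2)$ side, so the full intersection equals $(r,\alpha)(U\bowtie A)\cap(r,\beta)(U\bowtie A)$.

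At this stage I invoke the hypothesis on $A$: principal right ideals are constructible, hence $\alpha A$ and $\beta A$ are comparable. Assume $\alpha A\subseteq\beta A$, so $\alpha=\beta\delta$ for some $\delta\in A$ (the other case is symmetric). Then $(r,\beta)(e_U,\delta)=(r,\beta\delta)=(r,\alpha)$ by (B3) and (B4), whence $(r,\alpha)(U\bowtie A)\subseteq(r,\beta)(U\bowtie A)$ and the intersection is the principal ideal $(r,\alpha)(U\bowtie A)$. I expect the main technical obstacle to be the reparametrization step in the previous paragraph: the identification of the restriction of $(u_i,a_i)(U\bowtie A)$ to first coordinate in $rU$ with a single principal ideal in $U\bowtie A$. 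This is where the Zappa--Sz\'ep axioms (B5) and (B6) are used essentially, via $a_1\cdot(v_0s)=p_1(\alpha\cdot s)$ and $a_1|_{v_0s}=\alpha|_s$, together with bijectivity of both $u\mapsto a_1\cdot u$ and $u\mapsto\alpha\cdot u$. Once this structure is in place, the total-order hypothesis is exactly what converts the remaining $A$-comparison into a containment of principal ideals.
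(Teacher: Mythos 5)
The paper does not prove this lemma; it is quoted verbatim as \cite[Lemma 3.3]{BRRW}, so there is no in-paper argument to compare against. Your proof is correct and self-contained: left cancellativity follows from cancellativity in $U$ and $A$ together with bijectivity of $u\mapsto a\cdot u$; the reparametrization $(u_1,a_1)(v_0s,b)=(r,\alpha)(s,b)$ is exactly what (B5) and (B6) give, and the passage from $u_1(a_1\cdot v)\in rU$ to $v\in v_0U$ uses left cancellation in $U$ plus injectivity and surjectivity of both $v\mapsto a_1\cdot v$ and $s\mapsto\alpha\cdot s$, all of which you have. The final step is also sound, since principal right ideals $\alpha A$, $\beta A$ are constructible and hence comparable by hypothesis, and $\alpha=\beta\delta$ yields $(r,\alpha)=(r,\beta)(e_U,\delta)$ by (B3) and (B4). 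This is essentially the argument of \cite{BRRW}, and I see no gap.
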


Here, the set of constructible right ideals of left-cancellative semigroups is an important concept introduced by Xin Li in his work on semigroup $C^*$-algebras. One may refer to \cite{XLi2016} for more detail. 

\begin{example}\label{ex.Zappa} Zappa-Sz\'{e}p product provides more examples of right LCM semigroups.
\begin{enumerate}
\item\label{ex.Zappa.BS} Baumslag-Solitar monoids form another class of quasi-lattice ordered groups recently studied in \cite{Spielberg2012, CHR2016}. For $n,m\geq 1$, the Baumslag-Solitar monoid $B_{n,m}$ is the monoid generated by $a,b$ with the relation $ab^n=b^m a$. It is pointed out in \cite[Section 3.1]{BRRW} that they are Zappa-Sz\'{e}p product of $$U=\langle e, a, ba, \cdots, b^{m-1} a\rangle, A=\langle e, b\rangle.$$
\item\label{ex.Zappa.NN} The semigroup $\mathbb{N} \rtimes \mathbb{N}^\times$ where $$(x,a)(y,b)=(x+qy, ab).$$
One can similarly define $\mathbb{Q} \rtimes \mathbb{Q}^\times_+$. It is known that the pair $(\mathbb{Q} \rtimes \mathbb{Q}^\times_+, \mathbb{N} \rtimes \mathbb{N}^\times)$ is quasi-lattice ordered \cite[Proposition 2.1]{LacaRaeburn2010}. It is also shown in \cite[Section 3.2]{BRRW} that this semigroup is a Zappa-Sz\'{e}p product. 
\item One can construct a right LCM semigroup that is not quasi-lattice ordered using Zappa-Sz\'{e}p product. Take $U=\mathbb{N}^\times$ and $A=\mathbb{T}$, and let $a\cdot u=u$, $a|_u=a^u$ for all $a\in A, u\in U$. Their Zappa-Sz\'{e}p product can be described as $$(n,e^{i\alpha})(m,e^{i\beta})=(nm, e^{i(m\alpha+\beta)}).$$

One can easily verify that $U\bowtie A$ is a right LCM semigroup using the Lemma \ref{lm.zappa}. In $U\bowtie A$, $(1,1)$ is the identity. Moreover, the set of invertible elements consists of $(1,e^{i\alpha})$, where the inverse of $(1,e^{i\alpha})$ is $(1,e^{-i\alpha})$. Since it has non-trivial invertible elements, $U\bowtie A$ cannot be a quasi-lattice ordered semigroup (since we cannot embed it injectively inside a group). 
\end{enumerate}
\end{example}

We now briefly discuss a few important properties of right LCM semigroups which will be useful later. For the rest of this section, we fix a right LCM semigroup $P$. 

Let $a\in P$ and let $F\subset P$ be a finite subset. Denote $a\cdot F=\{a\cdot p: p\in F\}$. If $bP\supseteq\bigcap_{x\in F} xP$, we often write $b^{-1} \vee F=\{b^{-1} r: r\in \vee F\}$. Notice that since $bP\supseteq\bigcap_{x\in F} xP$, for each $r\in \vee F$, $bP\supseteq rP$ and $r=bp$ for some $p\in P$. This implies that $b^{-1}r\in P$ and $b^{-1} \vee F\subset P$, even though $b^{-1}$ is not part of the semigroup. 

\begin{lemma}\label{lm.left.inv} Let $a\in P$ and $F\subset P$ be a finite subset, $\vee\left( a\cdot F\right) = a\cdot \vee F$.
\end{lemma}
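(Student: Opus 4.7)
The proof plan centers on a single set-theoretic identity that reduces the lemma to bookkeeping with the right LCM property and left cancellation.

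First I would establish the auxiliary identity
\[
a\cdot\bigcap_{x\in F}xP \;=\; \bigcap_{x\in F}(ax)P.
\]
The inclusion $\subseteq$ is immediate: if $s\in\bigcap_{x\in F}xP$, then for each $x\in F$ we may write $s=xp_x$, so $as=(ax)p_x\in(ax)P$. For the reverse inclusion, suppose $t\in\bigcap_{x\in F}(ax)P$. Pick any $x_0\in F$ and write $t=ax_0p_{x_0}$, so $t\in aP$, say $t=at'$. For every other $x\in F$ we also have $t=ax p_x$, and left cancellativity of $P$ gives $t'=xp_x\in xP$. Hence $t'\in\bigcap_{x\in F}xP$ and $t=at'\in a\cdot\bigcap_{x\in F}xP$.

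Next I would derive the lemma from this identity. If $\vee F=\emptyset$, then $\bigcap_{x\in F}xP=\emptyset$, and by the identity $\bigcap_{x\in F}(ax)P=\emptyset$, whence $\vee(a\cdot F)=\emptyset=a\cdot\vee F$. Otherwise pick $r\in\vee F$, so $\bigcap_{x\in F}xP=rP$; applying the identity, $\bigcap_{x\in F}(ax)P=a\cdot rP=(ar)P$, which shows $ar\in\vee(a\cdot F)$ and therefore $a\cdot\vee F\subseteq\vee(a\cdot F)$. Conversely, given $s\in\vee(a\cdot F)$, in particular $s\in(ax_0)P\subseteq aP$ for some (any) $x_0\in F$, so $s=ar$ for some $r\in P$. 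Then $(ar)P=\bigcap_{x\in F}(ax)P=a\cdot\bigcap_{x\in F}xP$ by the identity, and a second application of left cancellation at the level of subsets (if $a\cdot S=a\cdot T$ then $S=T$) yields $rP=\bigcap_{x\in F}xP$, i.e.\ $r\in\vee F$ and $s\in a\cdot\vee F$.

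The only real content is the auxiliary identity, which is a direct consequence of left cancellation; once it is in hand, the equality of the two sets of least common right multiples is a transparent chase. The main pitfall to watch for is treating $\vee F$ as a single element rather than a set of elements differing by right multiplication by invertibles, and making sure the empty case is handled separately; otherwise the argument requires no use of the right LCM axiom beyond the definition of $\vee F$.
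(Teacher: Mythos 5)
Your proof is correct and follows essentially the same route as the paper: the whole content is the identity $a\cdot\bigcap_{x\in F}xP=\bigcap_{x\in F}(ax)P$, whose nontrivial inclusion is exactly the left-cancellation argument the paper gives. You merely spell out more explicitly (and correctly) the passage from that identity to the equality of the sets $\vee(a\cdot F)$ and $a\cdot\vee F$, including the empty case, which the paper leaves implicit.
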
 

\begin{proof} It suffices to show $\bigcap_{x\in F} axP = a\cdot \bigcap_{x\in F} xP$. The containment $\supseteq$ is obvious. For the $\subseteq$ direction, take $r\in \bigcap_{x\in F} axP$ and let $F=\{x_1,\cdots,x_n\}$. We can find $p_1,\cdots,p_n\in P$ so that $r=ax_i p_i$. By the left cancellative property, $x_i p_i=x_jp_j$ for all $i,j$, and thus $r\in a \cdot \bigcap_{x\in F} xP$. 
\end{proof} 

Let $F$ be a finite subset of $P$ and $x\in \vee F$. Consider the set $x\vee y$ for some $y\in P$. Notice for any $s\in\vee F$, $x=su$ for some invertible element $u\in P^\ast$. Therefore, $xP=suP=sP$ and thus $$x\vee y=\{r: rP=xP\cap yP\}=\{r: rP=sP\cap yP\}.$$

Therefore, $x\vee y$ is independent on the choice of $x\in\vee F$. For simplicity, we shall write it as $(\vee F)\vee y$. 

\begin{lemma}\label{lm.union} Let $F_1,F_2\subset P$ be two finite sets. Then $$\vee (F_1\cup F_2) = \left(\vee F_1\right) \vee \left(\vee F_2\right).$$
\end{lemma}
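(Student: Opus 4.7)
The plan is to unwind both sides to the single intersection $\bigcap_{x\in F_1\cup F_2} xP$ and show that they are parameterizing exactly the same set of generators for this principal right ideal. The key elementary observation is the set-theoretic identity
\[
\bigcap_{x\in F_1\cup F_2} xP = \Bigl(\bigcap_{x\in F_1} xP\Bigr)\cap \Bigl(\bigcap_{x\in F_2} xP\Bigr),
\]
which holds in any semigroup with no reference to the right LCM property.

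First I would dispose of the degenerate case. If either $\vee F_1$ or $\vee F_2$ is empty, then by definition the corresponding intersection is empty, hence by the identity above $\bigcap_{x\in F_1\cup F_2} xP=\emptyset$, so $\vee(F_1\cup F_2)=\emptyset$ as well; the convention that $(\vee F_1)\vee(\vee F_2)$ is empty in this situation makes both sides equal. For the main case, I would pick arbitrary representatives $s_1\in\vee F_1$ and $s_2\in\vee F_2$, which by definition satisfy $s_iP=\bigcap_{x\in F_i} xP$. By the right LCM property applied to the pair $s_1,s_2$, either $s_1P\cap s_2P$ is empty (in which case both sides are again empty by the identity), or there exists $r\in P$ with $rP=s_1P\cap s_2P$, i.e. $r\in s_1\vee s_2$.

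Then the computation is immediate: using the identity above,
\[
rP = s_1P\cap s_2P = \Bigl(\bigcap_{x\in F_1} xP\Bigr)\cap \Bigl(\bigcap_{x\in F_2} xP\Bigr) = \bigcap_{x\in F_1\cup F_2} xP,
\]
so $r\in\vee(F_1\cup F_2)$, and conversely any $r$ whose principal right ideal equals $\bigcap_{x\in F_1\cup F_2} xP$ equals $s_1P\cap s_2P$ and so lies in $s_1\vee s_2$. The fact that $s_1\vee s_2$ does not depend on the choices of $s_i\in\vee F_i$ was already recorded in the paragraph just preceding the lemma, which legitimizes the notation $(\vee F_1)\vee(\vee F_2)$. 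No obstacle of substance is expected: the proof is a bookkeeping exercise about principal right ideals, with the only mild care needed in the empty/non-existence case and in invoking the well-definedness of the notation.
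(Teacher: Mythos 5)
Your proof is correct and follows essentially the same route as the paper: both arguments fix representatives $s_i\in\vee F_i$, reduce everything to the set-theoretic identity $\bigcap_{x\in F_1\cup F_2}xP=\bigl(\bigcap_{x\in F_1}xP\bigr)\cap\bigl(\bigcap_{x\in F_2}xP\bigr)$, and note that the empty case causes no trouble. Your treatment of the degenerate and well-definedness points is slightly more explicit than the paper's, but the substance is identical.
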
 

\begin{proof} Fix $s_i\in\vee F_i$, we have 
\begin{align*}
\left(\vee F_1\right) \vee \left(\vee F_2\right) &= s_1 \vee s_2 \\
&= \{r: rP=s_1P\cap s_2 P\} \\
&= \{r: rP=\Big(\bigcap_{x\in F_1} xP\Big)\cap \Big(\bigcap_{x\in F_2} xP\Big) \} \\
&= \{r: rP=\bigcap_{x\in F_1\cup F_2} xP\} \\
&= \vee (F_1\cup F_2).
\end{align*}

The argument still works when one of $\vee F_i=\emptyset$. \end{proof}

\begin{lemma}\label{lm.pull} Let $p_1,\cdots,p_n\in P$ and $a\in P$. Let $F_1=\{p_1\cdot a, p_2, \cdots, p_n\}$ and $F_2=\{a, p_1^{-1}(p_1\vee p_2), \cdots, p_1^{-1}(p_1\vee p_n)\}$. Then $$\vee F_1 = p_1\cdot \vee F_2.$$
\end{lemma}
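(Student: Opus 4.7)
The plan is to reduce the claim to a straightforward intersection computation using the two preceding lemmas. First I would apply Lemma \ref{lm.left.inv} to rewrite the right-hand side as $p_1\cdot\vee F_2=\vee(p_1\cdot F_2)$. Unpacking the set $p_1\cdot F_2$: its first entry is $p_1 a$, and each remaining entry is of the form $p_1\cdot p_1^{-1}(p_1\vee p_i)$, which by left cancellativity equals $p_1\vee p_i$. So $\vee(p_1\cdot F_2)=\vee\big(\{p_1 a\}\cup (p_1\vee p_2)\cup\cdots\cup(p_1\vee p_n)\big)$, where each $p_1\vee p_i$ contributes any representative (the value of $\vee$ is independent of this choice).

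Next I would iterate Lemma \ref{lm.union}, noting that $p_1\vee p_i=\vee\{p_1,p_i\}$, to collapse the outer join into a single one:
\begin{equation*}
\vee(p_1\cdot F_2)=\vee\big(\{p_1 a\}\cup\{p_1,p_2\}\cup\cdots\cup\{p_1,p_n\}\big)=\vee\{p_1 a,\,p_1,\,p_2,\ldots,p_n\}.
\end{equation*}
Since $p_1 a\,P\subseteq p_1 P$, the factor $p_1 P$ is absorbed in the intersection $\bigcap_{x} xP$, so removing $p_1$ from the set above does not change the join. Hence $\vee(p_1\cdot F_2)=\vee\{p_1 a,p_2,\ldots,p_n\}=\vee F_1$, and the identity $\vee F_1=p_1\cdot\vee F_2$ follows.

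The only subtle point is notational: $p_1^{-1}(p_1\vee p_i)$ is a set of representatives rather than a single element, so I must check that everything is well-defined up to the right action of $P^{\ast}$. But as observed right before Lemma \ref{lm.union}, $\vee$ does not depend on the chosen representative, so this is harmless. Beyond that the argument is essentially bookkeeping — no real obstacle, just careful assembly of Lemmas \ref{lm.left.inv} and \ref{lm.union} together with the trivial inclusion $p_1 a P\subseteq p_1 P$.
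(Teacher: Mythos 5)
Your proposal is correct and follows essentially the same route as the paper: apply Lemma \ref{lm.left.inv} to write $p_1\cdot\vee F_2=\vee(p_1\cdot F_2)$, identify the entries $p_1\cdot p_1^{-1}(p_1\vee p_i)$ with representatives of $p_1\vee p_i$ (i.e.\ with the intersection $p_1P\cap p_iP$), and then absorb the redundant factor $p_1P$ via $p_1aP\subseteq p_1P$. The only cosmetic difference is that you invoke Lemma \ref{lm.union} to collapse the joins, whereas the paper writes out the same ideal-intersection computation explicitly.
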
 

\begin{proof} Take $s_i\in p_1\vee p_i$ for all $2\leq i\leq n$. Since $s_i\in p_1P$, $p_1^{-1} s_i\in P$ for all $i$. 

If $s_i'\in p_1\vee p_i$, then $s_i=s_i' u$ for some invertible $u$, and thus $s_i P= s_i' P$. Therefore, $\vee F_2=\vee \{a, p_1^{-1} s_i\}$.  Hence, by Lemma \ref{lm.left.inv},
\begin{align*}
p_1 \cdot \vee F_2 &= p_1\cdot \vee \{a, p_1^{-1} s_i\} \\
&= \vee \big(p_1\cdot \{a, p_1^{-1} s_i\}  \big)\\ 
&= \vee \{p_1 a, s_i\}
\end{align*}

But $s_iP=p_1P\cap p_i P$ since $s_i\in p_1\vee p_i$. Therefore,
\begin{align*}
p_1 \cdot \vee F_2 &= \vee \{p_1 a, s_i\} \\
&= \big\{r: rP=p_1aP \cap \big(\bigcap_{i=2}^n s_iP \big)\big\} \\
&= \big\{r: rP=p_1aP \cap \big(\bigcap_{i=2}^n p_1P\cap p_i P \big)\big\} \\
&= \big\{r: rP=p_1aP \cap \big(\bigcap_{i=1}^n p_i P \big)\big\}
\end{align*}

Notice that $p_1 aP\subseteq p_1 P$, and thus $$\big\{r: rP=p_1aP \cap \big(\bigcap_{i=1}^n p_i P \big)\big\} = \big\{r: rP=p_1aP \cap \big(\bigcap_{i=2}^n p_i P \big)\big\} = \vee F_1.\qedhere$$ \end{proof} 

\subsection{Nica-covariance}

The Nica-covariance representations play a central role in our study of regular dilation. The study of isometric Nica-covariant representations was originated from Nica's work on certain covariant representations of the quasi-lattice ordered groups and their $C^*$-algebras \cite{Nica1992}. It has since been generalized to right LCM semigroups, and more recently, to left cancellative semigroups via constructible ideals based on Xin Li's work \cite{XLi2016}.

\begin{definition}\label{df.NC} Given a right LCM semigroup $P$, a representation $V:P\to\bh{K}$ is called an isometric Nica-covariant representation if for each $p\in P$, $V(p)\in\bh{K}$ is an isometry, and for any $p,q\in P$, we have 
\begin{align*}V(p)V(p)^*V(q)V(q)^*=
\begin{cases} 
V(r)V(r)^*, & \mbox{ if }rP=pP\cap qP\\
0, & \mbox{ otherwise.}
\end{cases}
\end{align*}
\end{definition} 

Notice that for any invertible element $u\in P^\ast$, $V(u)$ is a unitary. When $rP=sP=pP\cap qP$, we can find an invertible $u\in P^\ast$ so that $r=su$. Hence, $$V(r)V(r)^*=V(su)V(su)^*=V(s)V(s)^*.$$
Therefore, the Nica-covariance condition is well defined on right LCM semigroups. 

In the case of a quasi-lattice ordered group $(G,P)$, it is often convenient to add $\infty$ to the quasi-lattice ordered semigroup $P$ and define $p\cdot\infty=\infty=\infty\cdot p$ for all $p\in P$. Moreover, for any $p,q\in P$ without a common upper bound, we may define $p\vee q=\infty$. Under this notation, an isometric Nica-covariant representation is a representation $V:P\to\bh{K}$ with $V(\infty)=0$ and $V(p)V(p)^*V(q)V(q)^*=V(p\vee q)V(p\vee q)^*$ for all $p,q\in P$. 

\begin{example} The Nica-covariance condition can be checked easily in many cases.
\begin{enumerate} 
\item Consider the semigroup $\mathbb{N}^k$ inside $\mathbb{Z}^k$. A representation $V:\mathbb{N}^k\to\bh{H}$ is uniquely determined by its image on $k$ generators $V_i=V(e_i)$. The Nica-covariance condition implies that for all $i\neq j$, $$V_i V_i^* V_j V_j^* = V_i V_j V_i^* V_j^*.$$
By multiplying $V_i^*$ on the left and $V_j$ on the right, we obtain $V_i^* V_j=V_j V_i^*$. Since $V$ is a representation, $V_i, V_j$ commute with one another. Therefore, we see that $\{V_1,\cdots,V_k\}$ is a family of doubly commuting isometries. In fact, every such family of doubly commuting isometries defines an isometric Nica-covariant representation on $\mathbb{N}^k$.
\item Consider the free semigroup $\mathbb{F}_k^+$. A representation $V:\mathbb{N}^k\to\bh{H}$ is uniquely determined by its image on $k$ generators $V_i=V(e_i)$. Since for all $i\neq j$, $e_i \vee e_j=\infty$, we have $$V_i V_i^* V_j V_j^* = 0.$$
Therefore, $V_i^* V_j=0$, which means $\{V_1,\cdots,V_k\}$ is a family of isometries with orthogonal ranges. Every such family defines an isometric Nica-covariant representation on $\mathbb{F}_k^+$. 
\end{enumerate} 
\end{example} 

\section{Regular Dilation}\label{sec.regular}

The study of dilation theory started when Sz.Nagy proved the celebrated Sz.Nagy dilation theorem, which states that for any contraction $T\in\bh{H}$, there exists an isometry $V\in\bh{K}$ with $\mathcal{H}\subset\mathcal{K}$ co-invariant, so that $T=P_\mathcal{H} V|_\mathcal{H}$. Soon, Ando \cite{Ando1963} extended Sz.Nagy's dilation to a pair of commuting contractions. However, extension to more commuting contractions fails due to an example of Parrott \cite{Parrott1970} where he gave a triple of commuting contractions that fails to have a commuting dilation. 

There are many ways to generalize Sz.Nagy's dilation beyond a pair of commuting contractions. Brehmer first studied regular dilations of commuting contractions in \cite{Brehmer1961}. Frazho-Bunce-Popescu also studied dilations of non-commutative row contractions. Recently, Brehmer's dilation and Frazho-Bunce-Popescu's dilation were unified as regular dilation on graph products of $\mathbb{N}$ \cite{Li2017}. It also turns out that having regular dilation in these cases corresponds to having a minimal isometric Nica-covariant dilation \cite{Li2017}. This motivates us to extend regular dilation further to representations of right LCM semigroups.  

Suppose $T:P\to\bh{H}$ is a representation of a semigroup $P$. Suppose there exists a larger Hilbert space $\mathcal{K}\supset\mathcal{H}$ and an isometric representation $V:P\to\bh{K}$ so that for all $p\in P$, $V(p)$ is an isometry and, $$T(p)=P_\mathcal{H} V(p)\big|_\mathcal{H}.$$

This representation $V$ of $P$ is called an isometric dilation of $T$. A result of Sarason \cite{Sarason1966} states that we can decompose $\mathcal{K}=\mathcal{H}_-\oplus\mathcal{H}\oplus\mathcal{H}^+$, so that under such decomposition, the isometric dilation $V(p)$ has the form: $$V(p)=\begin{bmatrix} * & 0 & 0 \\ * & T(p) & 0 \\ * & * & *\end{bmatrix}$$

$V$ is called minimal if $$\mathcal{K}=\overline{\lspan}\{V(p)h: p\in P,h\in\mathcal{H}\}$$
When $V$ is minimal, $\mathcal{H}_-$ must be $\{0\}$ and thus $\mathcal{H}$ is co-invariant for $V$. For each $p\in P$, we can write $V(p)$ as a $2\times 2$ block matrix with respect to the decomposition $\mathcal{K}=\mathcal{H}\oplus\mathcal{H}^\perp$: $$V(p)=\begin{bmatrix} T(p) & 0 \\ * & * \end{bmatrix}.$$
Notice that when $V$ is a minimal dilation of $T$, $\|T(p)\|\leq \|V(p)\|=1$, and thus $T$ is always a contractive representation. Throughout this paper, we assume every representation of semigroups is contractive. 

We are interested in the case when $T$ has a minimal isometric dilation $V$ that is also Nica-covariant. In such case, we say $V$ is a minimal isometric Nica-covariant dilation of $T$.  

A common tool in studying regular dilation is the completely positive definite kernel \cite{Popescu1996,Popescu1999b}. Given a unital semigroup $P$, a unital Toeplitz kernel is a map $K:P\times P\to\bh{H}$ so that $K(e,e)=I$, $K(p,q)=K(q,p)^*$, and $K(ap,aq)=K(p,q)$. It is called completely positive definite if for any $p_1,\cdots,p_n\in P$, the operator matrix $[K(p_i,p_j)]\geq 0$. A classical result known as the Naimark dilation theorem \cite{Naimark1943} can be restated as the following theorem (\cite[Theorem 3.2]{Popescu1999b}):

\begin{theorem}\label{thm.Naimark} If $K:P\times P\to\bh{H}$ is a completely positive definite kernel, then there exists a Hilbert space $\mathcal{K}\supset\mathcal{H}$ and an isometric representation $V:P\to\bh{K}$ so that $$K(p,q)=P_\mathcal{H} V(p)^* V(q)\big|_\mathcal{H} \mbox{ for all }p,q\in P.$$
Moreover, there is a unique minimal dilation $V$, up to unitary equivalence, that satisfies 
$$\overline{\lspan}\{V(p)h: p\in P,h\in\mathcal{H}\}=\mathcal{K},$$
and $\mathcal{H}$ is co-invariant for $V$. The minimal dilation $V$ is called the Naimark dilation of $K$. 

Conversely, if $V:P\to\bh{K}$ is a minimal isometric dilation of $T:P\to\bh{H}$, then let $$K(p,q)=P_\mathcal{H} V(p)^* V(q)\big|_\mathcal{H}$$
We have $K(p,q)$ is a completely positive definite Toeplitz kernel with $K(e,p)=T(p)$ for all $p\in P$.
\end{theorem}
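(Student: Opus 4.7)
The plan is to carry out a Kolmogorov/GNS-type construction tailored to the semigroup setting, mirroring the classical Naimark proof. I would first form the algebraic vector space $\mathcal{V}$ of finite formal sums $\sum_i \delta_{p_i}\otimes h_i$ with $p_i\in P$, $h_i\in\mathcal{H}$, and equip it with the sesquilinear form
$$\Big\langle \sum_i \delta_{p_i}\otimes h_i,\ \sum_j \delta_{q_j}\otimes k_j\Big\rangle = \sum_{i,j}\langle K(q_j,p_i)h_i,k_j\rangle,$$
so that the complete positive definiteness hypothesis is exactly the statement that this form is positive semidefinite. Quotienting by the null space $\mathcal{N}=\{\xi:\langle\xi,\xi\rangle=0\}$ and completing yields a Hilbert space $\mathcal{K}$, and the map $h\mapsto\delta_e\otimes h$ embeds $\mathcal{H}$ isometrically into $\mathcal{K}$ since $K(e,e)=I$.

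Next I would define, for each $a\in P$, a left-shift $V(a)$ by $V(a)(\delta_p\otimes h)=\delta_{ap}\otimes h$ on the algebraic level and extend linearly. The Toeplitz identity $K(ap,aq)=K(p,q)$ makes $V(a)$ preserve the form, hence it descends to a well-defined isometry on $\mathcal{V}/\mathcal{N}$ and extends by continuity to an isometry on $\mathcal{K}$. The semigroup law $V(a)V(b)=V(ab)$ is immediate from the definition, and a direct computation verifies $P_\mathcal{H} V(p)^*V(q)|_\mathcal{H}=K(p,q)$. By construction $\mathcal{K}$ is the closed linear span of $\{V(p)h:p\in P,h\in\mathcal{H}\}$, so $V$ is minimal, and as the excerpt notes this forces $\mathcal{H}$ to be co-invariant. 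Uniqueness of the minimal dilation up to unitary equivalence is then a standard intertwining argument: given two minimal dilations $V_1,V_2$ inducing the same kernel $K$, the prescription $\sum V_1(p_i)h_i\mapsto\sum V_2(p_i)h_i$ is well-defined and isometric on a dense subspace, since the norm $\bigl\|\sum V_\ell(p_i)h_i\bigr\|^2$ is computable from $K$ alone.

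The converse is a short verification. Starting from a minimal isometric dilation $V$ of $T$, set $K(p,q)=P_\mathcal{H} V(p)^*V(q)|_\mathcal{H}$; the Toeplitz identity then follows from $V(a)^*V(a)=I$, complete positive definiteness from
$$\sum_{i,j}\langle K(p_i,p_j)h_j,h_i\rangle=\Big\|\sum_i V(p_i)h_i\Big\|^2\geq 0,$$
and $K(e,p)=P_\mathcal{H} V(p)|_\mathcal{H}=T(p)$ by the definition of dilation. The main delicate point in the whole argument is the passage from the algebraic construction to the well-defined isometry $V(a)$ on the completion: one has to verify in tandem that $V(a)$ respects the null space and preserves lengths on the quotient, both of which follow cleanly from the Toeplitz condition but must be organized together since $V(a)$ only makes sense on $\mathcal{V}/\mathcal{N}$ once it is shown to preserve the form.
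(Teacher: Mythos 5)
Your proposal is correct and follows essentially the same route as the paper, which likewise constructs the Naimark dilation via the degenerate inner product $\langle \sum \delta_p\otimes h_p, \sum \delta_q\otimes k_q\rangle = \sum_{p,q}\langle K(q,p)h_p,k_q\rangle$ on $P\otimes\mathcal{H}$, quotients by the null space, completes, and defines $V(p)$ as the left shift $\delta_q\otimes h\mapsto\delta_{pq}\otimes h$; the converse verification via $\sum_{i,j}\langle K(p_i,p_j)h_j,h_i\rangle=\bigl\|\sum_i V(p_i)h_i\bigr\|^2\geq 0$ matches the paper's factorization of the operator matrix $[V(p_i)^*V(p_j)]$ as a column times a row. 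Your added care about checking that $V(a)$ respects the null space before descending to the quotient is a point the paper delegates to the cited reference, but it is handled correctly.
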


\begin{proof} The proof of the theorem can be found in \cite[Theorem 3.2]{Popescu1999b}. However, it is worthwhile to briefly go over the proof since it explicitly constructs the minimal Naimark dilation that are useful later. 

First let $\mathcal{K}_0=P\otimes \mathcal{H}$ and define a degenerate inner product by $$\left\langle \sum \delta_p\otimes h_p, \sum \delta_q\otimes k_q\right\rangle = \sum_{p,q} \langle K(q,p)h_p,k_q\rangle.$$

Let $\mathcal{N}=\{k\in\mathcal{K}_0:\langle k,k\rangle=0\}$ and $\mathcal{K}$ be the completion of $\mathcal{K}_0/\mathcal{N}$ with respect to the inner product. $\mathcal{H}$ is naturally embedded in $\mathcal{K}$ as $\delta_e\otimes\mathcal{H}$. For each $p\in P$, define $V(p)\delta(q)\otimes h=\delta(pq)\otimes h$. One can check $V:P\to\bh{K}$ is the minimal Naimark dilation of $T$. 

For the converse, it is simple to check that $K$ is indeed a Toeplitz kernel. To show it is completely positive definite, take any $p_1,\cdots,p_n\in P$, the operator matrix
\begin{align*}
 &[K(p_i,p_j)] \\
=&P_{\mathcal{H}^n} [V(p_i)^* V(p_j)]\big|_{\mathcal{H}^n} \\
=&P_{\mathcal{H}^n} \left(\begin{bmatrix} V(p_1)^* \\ \vdots \\ V(p_n)^*\end{bmatrix} \begin{bmatrix} V(p_1) & \cdots & V(p_n)\end{bmatrix}\right) \Big|_{\mathcal{H}^n}\geq 0.
\end{align*}

Therefore, $K$ is a completely positive definite Toeplitz kernel. Moreover, $K(e,p)=P_\mathcal{H} V(p)\big|_\mathcal{H}=T(p)$ for all $p\in P$. 
\end{proof}

Given a contractive representation $T:P\to\bh{H}$: if there is a completely positive definite kernel $K$ so that for all $p\in P$, $K(e,p)=T(p)$, then the minimal Naimark dilation of $K$ is a minimal isometric dilation of $T$. It follows from Theorem \ref{thm.Naimark} that every minimal isometric dilation for $T$ corresponds to a minimal Naimark dilation of a completely positive definite Toeplitz kernel $K$. 

However, it is often difficult to find such a kernel $K$ explicitly from $T$. For example, take $P=\mathbb{N}^2$ whose representation is determined by a pair of commuting contractions $T_i=T(e_i)$. It follows from the Ando's dilation that $T$ has an isometric dilation $V$ and thus there exists a completely positive definite kernel $K$ with $K(e,p)=T(p)$. However, one can hardly ever write out $K$ explicitly.

This motivated the study of regular dilation where we can write a kernel $K$ based on $T$ and ask whether such kernel is completely positive definite. Suppose $T$ has a minimal isometric Nica-covariant dilation $V$, then the Toeplitz kernel $K$ defined by $V$ can be written out in terms of $T$ in an explicit way.

\begin{proposition}\label{pn.regular.NC} Let $T:P\to\bh{H}$ be a contractive representation of a right LCM semigroup $P$ so that it has a minimal isometric Nica-covariant dilation $V$. Then, the completely positive definite Toeplitz kernel $K$ associated with $V$ is given by $$K(p,q)=P_\mathcal{H} V(p)^* V(q) \big|_\mathcal{H}=T(p^{-1}s) T(q^{-1}s)^*$$ for all $p,q\in P$, $s\in p\vee q$. 
\end{proposition}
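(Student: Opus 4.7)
The plan is to compute $V(p)^* V(q)$ explicitly via Nica-covariance and then compress to $\mathcal{H}$ using the co-invariance of $\mathcal{H}$ that comes from minimality. Theorem \ref{thm.Naimark} already supplies $K(p,q) = P_\mathcal{H} V(p)^* V(q)|_\mathcal{H}$, so the only thing that needs to be shown is the second equality $P_\mathcal{H} V(p)^* V(q)|_\mathcal{H} = T(p^{-1}s) T(q^{-1}s)^*$.

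First I would fix $s \in p \vee q$ and set $a = p^{-1}s$, $b = q^{-1}s$, so that $s = pa = qb$ and hence $V(s) = V(p)V(a) = V(q)V(b)$. The Nica-covariance relation in Definition \ref{df.NC} then reads
$$V(p)V(p)^* V(q)V(q)^* \;=\; V(s)V(s)^* \;=\; V(p)\, V(a) V(b)^*\, V(q)^*.$$
Multiplying on the left by $V(p)^*$ and on the right by $V(q)$, and using that every $V(r)$ is an isometry so that $V(p)^*V(p) = I$ and $V(q)^*V(q) = I$, the identity collapses to the clean formula $V(p)^* V(q) = V(a) V(b)^*$. This is the core algebraic step.

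The final step is to compress to $\mathcal{H}$. Since $V$ is a minimal dilation, $\mathcal{H}_- = \{0\}$ and $\mathcal{H}$ is co-invariant for $V$, i.e.\ $V(r)^* \mathcal{H} \subseteq \mathcal{H}$ for every $r \in P$. In particular $V(b)^*|_\mathcal{H}$ already lands in $\mathcal{H}$ and therefore equals $T(b)^*$ on $\mathcal{H}$. Compressing once more,
$$P_\mathcal{H}\, V(a) V(b)^* h \;=\; P_\mathcal{H}\, V(a)\bigl(T(b)^* h\bigr) \;=\; T(a) T(b)^* h$$
for every $h \in \mathcal{H}$, which is the desired formula after substituting $a = p^{-1}s$ and $b = q^{-1}s$.

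I do not anticipate a genuine obstacle; this is the standard Nica-covariance manipulation together with co-invariance. The only point worth flagging is that the right-hand side looks as if it depends on the choice of representative $s \in p \vee q$, but the left-hand side $P_\mathcal{H} V(p)^* V(q)|_\mathcal{H}$ is manifestly independent of $s$, so this apparent dependence is illusory (and can be verified directly by noting that two choices $s,s'$ differ by an invertible $u \in P^*$, for which $T(u)$ is unitary by the usual norm argument, and the extra factor $T(u)T(u)^*$ cancels).
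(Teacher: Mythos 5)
Your proposal is correct and follows essentially the same route as the paper: the key step of multiplying the Nica-covariance identity by $V(p)^*$ on the left and $V(q)$ on the right to obtain $V(p)^*V(q)=V(p^{-1}s)V(q^{-1}s)^*$ is exactly the paper's argument, and your compression to $\mathcal{H}$ via co-invariance is just a pointwise rephrasing of the paper's $2\times 2$ block-matrix computation. The remark on independence of the choice of $s\in p\vee q$ is a nice touch that the paper records separately as a remark after Definition \ref{df.regular}.
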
 

\begin{proof} By the Nica-covariance, $V(p)V(p)^*V(q)V(q)^*=V(s) V(s)^*$ for $s\in p\vee q$, where by convention, $V(s)=0$ if $p\vee q=\emptyset$. Multiplying $V(p)^*$ on the left and $V(q)$ on the right gives us $$V(p)^* V(q)=V(p^{-1} s)V(q^{-1} s)^*.$$

Since $V$ is minimal, $\mathcal{H}$ is co-invariant. With respect to the decomposition $\mathcal{K}=\mathcal{H}\oplus\mathcal{H}^\perp$, each $V(a)$ can be written as $$V(a)=\begin{bmatrix} T(a) & 0 \\ * & * \end{bmatrix}.$$

Therefore, for any $a,b\in P$, $V(a) V(b)^*$ can be written as $$V(a)V(b)^* = \begin{bmatrix} T(a)T(b)^* & * \\ * & * \end{bmatrix}.$$

Therefore,
\begin{align*}
K(p,q) &= P_\mathcal{H} V(p)^* V(q)\big|_\mathcal{H} \\
&= P_\mathcal{H} V(p^{-1}s)V(q^{-1} s)^*\big|_\mathcal{H} \\
&= T(p^{-1}s)T(q^{-1} s)^*
\end{align*}

This proves the desired result. \end{proof} 

This motivates our definition of $\ast$-regular dilation. 

\begin{definition}\label{df.regular} Let $P$ be a right LCM semigroup and $T:P\to\bh{H}$ a unital contractive representation. Define a Toeplitz kernel $K:P\times P\to\bh{H}$ by $$K(p,q)=T(p^{-1}s) T(q^{-1}s)^*$$ for all $p,q\in P$, $s\in p\vee q$. Here, we assume by convention that when $p\vee q=\emptyset$, $K(p,q)=0$. 

We say $T$ has a $\ast$-regular dilation if this kernel $K$ is completely positive definite. In such case, the minimal Naimark dilation $V$ of the kernel $K$ is called the $\ast$-regular dilation of $T$. 
\end{definition}

\begin{remark} This kernel $K$ is well defined since for any $s,t\in p\vee q$, there exists an invertible $u$ with $s=tu$. Therefore, $$T(p^{-1} s)T(q^{-1} s)^*=T(p^{-1} t)T(q^{-1} t)^*.$$
\end{remark} 

\begin{remark}\label{rm.regular.Toeplitz} This kernel $K$ is indeed a Toeplitz kernel. It is clear that $K(e,e)=I$, $K(p,q)=K(q,p)^*$. If $a\in P$, by Lemma \ref{lm.left.inv}, we have $ap\vee aq=a(p\vee q)$ and therefore $(ap)^{-1}(ap\vee aq)=p^{-1}(p\vee q)$ and similarly $(aq)^{-1}(ap\vee aq)=q^{-1}(p\vee q)$. 
\end{remark}

It is now evident from the Proposition \ref{pn.regular.NC} that the kernel in the Definition \ref{df.regular} is our only choice for $T$ to have a minimal isometric dilation that is also Nica-covariant. We shall soon see that the converse is also true (Theorem \ref{thm.reg.NC}): if this kernel $K$ is completely positive definite, then the minimal Naimark dilation is Nica-covariant. We first note that our definition of $\ast$-regular dilation coincides with the definition in the context of $\ell$-semigroup and graph product of $\mathbb{N}$. 

\begin{example} In the case that $P$ is an $\ell$-semigroup, regular dilation was first defined and studied in \cite{DFK2014} and a necessary and sufficient condition was given in \cite{BLi2014}. In such case, for every $p,q\in P$, there exists a unique pair $g_+,g_-\in P$ with $p^{-1} q=g_-^{-1} g_+$ and $g_-\wedge g_+=e$. The definition of $\ast$-regularity on an $\ell$-semigroup is equivalent to the kernel $K(p,q)=T(g_+)T(g_-)^*$ being completely positive definite. 

In fact, $g_+=(p\wedge q)^{-1} q=p^{-1}(p\vee q)$ and $g_-=(p\wedge q)^{-1} p = q^{-1}(p\vee q)$, and it is clear that these two definitions coincide. 

Historically, Brehmer's original definition of regular dilation on $\mathbb{N}^k$ requires the kernel $K(p,q)=T(g_-)^* T(g_+)$ to be completely positive, which is equivalent to $T^*$ being $\ast$-regular. This is why we adopt the notion of $\ast$-regular dilation instead of regular dilation. 
\end{example}

\begin{example} In the case that $P$ is a graph product of $\mathbb{N}$, $\ast$-regular dilation was recently defined in \cite{Li2017} as a generalization of the Brehmer dilation and Frazho-Bunce-Popescu dilation. The definition of $\ast$-regular dilation in this case can be summarized as follow: given $p,q\in P$, one first identifies the largest $a\in P$ so that $p=a\cdot p',q=a\cdot q'$ via repeatedly removing a common initial syllable. This procedures ends when there is no $e\neq b\in P$ with $p'=b\cdot p''$ and $q'=b\cdot q''$. Then the kernel is defined as $$K(p,q)=K(p',q')=\begin{cases} T(q')T(p')^*, \mbox{ if }p',q'\mbox{ commute}; \\
0, \mbox{ otherwise}.\end{cases}$$

Now if $p', q'$ do not commute, then $p'\vee q'=\infty$ and similarly $p\vee q=\infty$. Otherwise, since they have no common initial syllable, $p'\vee q'=p'q'$. Therefore,
\begin{align*}
p^{-1}(p\vee q) &= p'^{-1}(p'\vee q') \\
&= p'^{-1} p'\cdot q' = q' 
\end{align*}
Similarly, $q^{-1}(p\vee q)=p'$. Again, the Definition \ref{df.regular} coincides with that in \cite{Li2017}.
\end{example}  

\begin{theorem}\label{thm.reg.NC} $T$ has a $\ast$-regular dilation if and only if it has a minimal isometric Nica-covariant dilation. 
\end{theorem}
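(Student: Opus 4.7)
The ``only if'' direction is immediate. If $V$ is a minimal isometric Nica-covariant dilation of $T$, then Proposition~\ref{pn.regular.NC} identifies its associated Toeplitz kernel as $K(p,q)=T(p^{-1}s)T(q^{-1}s)^*$, and Theorem~\ref{thm.Naimark} guarantees that this $K$ is completely positive definite, so $T$ has a $\ast$-regular dilation.

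For the converse, assume $K$ is completely positive definite and let $V\colon P\to\bh{K}$ be the minimal Naimark dilation of $K$ provided by Theorem~\ref{thm.Naimark}. I would work in the explicit model from the proof of that theorem, in which $\mathcal{K}$ is the completion of $P\otimes\mathcal{H}/\mathcal{N}$, the action is $V(p)(\delta_q\otimes h)=\delta_{pq}\otimes h$, and $\mathcal{H}$ embeds as $\delta_e\otimes\mathcal{H}$. Using $K(e,q)=T(q)$, a direct inner-product calculation yields $V(q)^*h=T(q)^*h$ for every $h\in\mathcal{H}$.

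The crux of the argument is to prove the identity
$$
V(p)^* V(u)\,h \;=\; \delta_{p^{-1}s}\otimes T(u^{-1}s)^* h \qquad (h\in\mathcal{H},\; s\in p\vee u),
$$
with $V(p)^*V(u)h=0$ if $p\vee u=\emptyset$. I verify this by testing against the spanning family $\delta_w\otimes l$: the left side becomes $\langle K(u,pw)\,l,h\rangle$ and the right side becomes $\langle T(u^{-1}s)K(p^{-1}s,w)\,l,h\rangle$, reducing the problem to the kernel identity $T(u^{-1}s)K(p^{-1}s,w)=K(u,pw)$. If $t\in (p^{-1}s)\vee w$, the right LCM structure forces $pt\in u\vee pw$ with $u^{-1}(pt)=(u^{-1}s)\cdot((p^{-1}s)^{-1}t)$ and $(pw)^{-1}(pt)=w^{-1}t$, so both sides unfold to $T(u^{-1}s)T((p^{-1}s)^{-1}t)T(w^{-1}t)^*$. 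If $(p^{-1}s)\vee w=\emptyset$ then $u\vee pw=\emptyset$ by the same chain of inclusions and both sides vanish; when $p\vee u=\emptyset$, the inclusion $pwP\cap uP\subseteq pP\cap uP=\emptyset$ makes every test inner product zero.

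With the identity in hand, Nica-covariance falls out by iteration on the dense spanning set $\{V(u)h : u\in P, h\in\mathcal{H}\}$. A first application gives $V(q)V(q)^*V(u)h=\delta_{s'}\otimes T(u^{-1}s')^* h$ for $s'\in q\vee u$; a second application, together with the multiplicativity of $T$, yields
$$
V(p)V(p)^*V(q)V(q)^*V(u)h \;=\; \delta_t\otimes T(u^{-1}t)^* h
$$
for any $t\in\vee\{p,q,u\}$. Applying the identity once more to $V(r)V(r)^*V(u)h$ with $r\in p\vee q$, and invoking Lemma~\ref{lm.union} to equate $\vee\{r,u\}$ with $\vee\{p,q,u\}$, produces exactly the same vector. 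The case $p\vee q=\emptyset$ collapses at the second application since then $p\vee s'\subseteq pP\cap qP=\emptyset$. This verifies the Nica-covariant relation on a dense subspace of $\mathcal{K}$, hence on all of $\mathcal{K}$. The main technical obstacle is the bookkeeping behind the kernel identity $T(u^{-1}s)K(p^{-1}s,w)=K(u,pw)$, which requires simultaneously tracking three semigroup operations against the Toeplitz shift in $K$; everything beyond this reduces to routine iteration.
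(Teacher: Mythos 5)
Your proof is correct and follows essentially the same route as the paper's: the forward direction is Proposition~\ref{pn.regular.NC} together with the converse half of Theorem~\ref{thm.Naimark}, and for the converse you test the minimal Naimark dilation against the spanning family, with your kernel identity $T(u^{-1}s)K(p^{-1}s,w)=K(u,pw)$ being precisely the Lemma~\ref{lm.pull} computation the paper carries out. The only difference is organizational and harmless: the paper first upgrades the $\mathcal{H}$-restricted identity to the full operator identity $V(p)^*V(q)=V(p^{-1}s)V(q^{-1}s)^*$ (via a second density computation on $V(r)\mathcal{H}$) and then multiplies, whereas you iterate the explicit vector formula directly on $V(u)\mathcal{H}$, using that $V(q)V(q)^*V(u)h$ lands back in $\{V(s')k:k\in\mathcal{H}\}$.
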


\begin{proof} It follows from the Proposition \ref{pn.regular.NC} that if $V$ is the minimal isometric Nica-covariant dilation, then for any $p,q\in P$ and $s\in p\vee q$,
$$K(p,q) = T(p^{-1}s)T(q^{-1} s)^* = P_\mathcal{H} V(p)^* V(q)\big|_\mathcal{H}.$$

Since $V$ is a minimal isometric dilation of $T$, it follows from the second half of Theorem \ref{thm.Naimark} that $K$ is completely positive definite, which is exactly how we defined $T$ to have a $\ast$-regular dilation. 

Conversely, suppose that $T$ has a $\ast$-regular dilation so that the kernel $K$ in the Definition \ref{df.regular} is completely positive definite. Let $V:P\to\bh{K}$ be the minimal Naimark dilation as constructed in the proof of Theorem \ref{thm.Naimark}. We first show that for any $p,q\in P$ and $s\in p\vee q$, $V(p)^* V(q)|_\mathcal{H}=V(p^{-1} s) V(q^{-1} s)^*|_\mathcal{H}$. Since $\lspan\{V(r)h:r\in P, h\in\mathcal{H}\}$ is dense in $\mathcal{K}$, it suffices to prove for any $r\in P$ and $h,k\in\mathcal{H}$, we have $$\langle V(p)^* V(q) h, V(r) k\rangle = \langle V(p^{-1} s) V(q^{-1} s)^* h, V(r) k\rangle.$$

Starting from the left hand side:
\begin{align*}
\langle V(p)^* V(q) h, V(r) k\rangle &= \langle  V(pr)^* V(q) h,  k\rangle \\
&= \langle  K(pr,q) h, k\rangle_\mathcal{H} \\
&= \langle  T((pr)^{-1} t) T(q^{-1} t)^* h, k\rangle_\mathcal{H}
\end{align*}

Here, $t\in pr \vee q$. In the special case when $pr\vee q=\emptyset$, $K(pr,q)=0$ and $V(pr)^* V(q)=0$ by the Nica-covariance condition, and thus the result follows. Otherwise, $pr\vee q\neq\emptyset$ and thus $p\vee q\neq\emptyset$. 

Take $s\in p\vee q$ and $w=p^{-1} s$. Notice now, by the Lemma \ref{lm.pull},
$$p^{-1}(pr \vee q) = r \vee (p^{-1}s) = r\vee w .$$
Hence,
\begin{align*}
q^{-1}(pr \vee q) &= q^{-1}s \cdot s^{-1}(pr\vee q) \\
&= q^{-1} s \cdot w^{-1} p^{-1}(pr \vee q) \\
&= q^{-1} s\cdot w^{-1} (r\vee w).
\end{align*}
Therefore, take $v=p^{-1} t\in r\vee w$, 
\begin{align*}
 & \langle  T((pr)^{-1} t) T(q^{-1} t)^* h, k\rangle_\mathcal{H} \\
=& \langle  T(r^{-1} v) T(w^{-1} v)^* T(q^{-1}s)^* h, k\rangle_\mathcal{H} \\
=& \langle K(r, w) V^*(q^{-1}s)^* h, k\rangle_\mathcal{H} \\
=& \langle V(p^{-1}s)V^*(q^{-1}s)^* h, V(r) k\rangle.
\end{align*}
Here, we used the fact that for all $s\in P$, $\mathcal{H}$ is co-invariant for $V$ and thus $h'=V^*(q^{-1}s)^* h\in\mathcal{H}$. Since $h',k\in\mathcal{H}$, $$\langle K(r,w) h', k\rangle = \langle V(r)^* V(w) h', k\rangle = \langle V(w) h', V(r) k\rangle.$$

Now it suffices to show for all $r\in P$ and $s\in p\vee q$, $$V(p)^* V(q) V(r)|_\mathcal{H}=  V(p^{-1} s) V(q^{-1}s)^* V(r)|_\mathcal{H}.$$ 

Denote $w=q^{-1}s$ and similar to the computation earlier, observe that $w\vee r=q^{-1}(p\vee qr)$. Take $t\in p\vee qr$ and $v=q^{-1}t\in w\vee r$, and start from the left, 
\begin{align*}
V(p)^* V(q) V(r)|_\mathcal{H} =& V(p^{-1} t) V(r^{-1}q^{-1} t)^*|_\mathcal{H} \\
=& V(p^{-1}s) V(w^{-1} v) V^*(r^{-1} v)|_\mathcal{H} \\
=& V(p^{-1}s) V(w)^* V(r)|_\mathcal{H} \\
=& V(p^{-1}s) V(q^{-1}s)^* V(r)|_\mathcal{H}
\end{align*}

This proves for any $p,q\in P$ and $s\in p\vee q$, $V(p)^* V(q)=V(p^{-1}s) V(q^{-1}s)^*$. Multiplying $V(p)$ on the left and $V(q)^*$ on the right proves that $V$ is Nica-covariant. \end{proof}

It has been observed that the kernel $K$ being completely positive is often equivalent to a Brehmer-type condition where a collection of operators (instead of a collection of operator matrices) are positive. This is the case in Brehmer's dilation, Frazho-Bunce-Popescu's dilation, and more recently, dilation on graph products of $\mathbb{N}$. We first establish a Brehmer-type condition in the case of an arbitrary right LCM semigroup.

For simplicity, we shall denote $TT^*(p)=T(p)T(p)^*$. It is clear that $TT^*(pq)=T(p)TT^*(q)T(p)^*$. Since $T$ is contractive, for each invertible $u\in P^*$, $T(u)$ must be an unitary. For a finite subset $F\subset P$, we define $TT^*(\vee F)=TT^*(p)$ for some $p\in\vee F$. This is well-defined since for any two $p,q\in\vee P$, $p=qu$ for some invertible $u\in P^*$. Therefore, $TT^*(p)=T(q)TT^*(u)T(q)^*=TT^*(q)$.

\begin{theorem}\label{thm.main} Let $T:P\to\bh{H}$ be a unital representation of a right LCM semigroup. The following are equivalent:
\begin{enumerate}
\item\label{thm.main.1} $T$ has a $\ast$-regular dilation;
\item\label{thm.main.2} $T$ has a minimal isometric Nica-covariant dilation;
\item\label{thm.main.3} For any finite set $F\subset P$, $$Z(F)=\sum_{U\subseteq F} (-1)^{|U|} TT^*(\vee U) \geq 0.$$
\end{enumerate}
\end{theorem}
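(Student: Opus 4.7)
The equivalence $(1) \Leftrightarrow (2)$ is Theorem \ref{thm.reg.NC}, so the real work is proving $(2) \Leftrightarrow (3)$.

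For $(2) \Rightarrow (3)$: I take a minimal isometric Nica-covariant dilation $V : P \to \bh{K}$ and set $E_p := V(p) V(p)^*$. Nica-covariance gives $E_p E_q = V(s)V(s)^*$ for $s \in p \vee q$ (and $0$ when $p\vee q = \emptyset$); since the right-hand side is symmetric in $p, q$, the projections $E_p$ pairwise commute. An induction on $|U|$ then yields $\prod_{p \in U} E_p = V(s_U) V(s_U)^*$ for any $s_U \in \vee U$, and co-invariance of $\mathcal{H}$ compresses this to $P_\mathcal{H} V(s_U) V(s_U)^*|_\mathcal{H} = TT^*(\vee U)$. Therefore
\[
Z(F) \;=\; P_\mathcal{H}\!\left(\prod_{p\in F}(I - E_p)\right)\!\Big|_\mathcal{H} \;\geq\; 0,
\]
because the operator in parentheses is a product of commuting projections, hence itself a projection.

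For $(3) \Rightarrow (1)$: by Theorem \ref{thm.Naimark}, it suffices to show the Toeplitz kernel $K$ of Definition \ref{df.regular} is completely positive definite, since the argument of Theorem \ref{thm.reg.NC} will then promote the resulting minimal Naimark dilation to a Nica-covariant one. Given $F = \{p_1, \ldots, p_n\}$ and $h_1, \ldots, h_n \in \mathcal{H}$, the aim is $\sum_{i,j} \langle K(p_i, p_j) h_j, h_i\rangle \geq 0$. Since $K(p_i,p_j) = 0$ whenever $p_i\vee p_j = \emptyset$, the matrix $[K(p_i,p_j)]$ is block-diagonal with respect to the equivalence relation on $F$ generated by ``having nonempty join,'' reducing matters to the case $\vee F \neq \emptyset$. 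Picking $r \in \vee F$ and using Lemmas \ref{lm.left.inv}, \ref{lm.union}, and \ref{lm.pull} to realize each $s_{ij}\in p_i\vee p_j$ as a left factor of $r$, I plan to expand the quadratic sum via an inclusion-exclusion identity until it takes the form $\langle Z(G)\xi, \xi\rangle$ for an auxiliary finite $G \subset P$ and vector $\xi \in \mathcal{H}$ assembled from the $h_i$'s and appropriate $T$-operators; positivity then follows immediately from the hypothesis $Z(G) \geq 0$. A natural backup strategy is to construct the dilation directly on $\mathcal{K}_0 = P \otimes \mathcal{H}$ with pseudo-inner product $\langle \delta_p\otimes h, \delta_q\otimes k\rangle := \langle K(q,p)h, k\rangle$ and left-shift operators $V(p)\colon \delta_q\otimes h \mapsto \delta_{pq}\otimes h$, verifying positivity of the pseudo-inner product by means of the Brehmer-type condition and then completing modulo null vectors.

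The principal obstacle is this final combinatorial identification: in a right LCM semigroup the pairwise join is only defined up to the invertible group $P^*$, so one must verify throughout that neither the choice of $s_{ij} \in p_i\vee p_j$ nor the choice of $r\in\vee F$ affects the identity. I expect the lemmas of Section \ref{sec.background}, together with the observation in Remark \ref{rm.regular.Toeplitz} that the kernel $K$ is genuinely Toeplitz, to make this bookkeeping go through cleanly.
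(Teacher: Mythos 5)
Your treatment of $(1)\Leftrightarrow(2)$ and of $(2)\Rightarrow(3)$ matches the paper: citing Theorem \ref{thm.reg.NC}, then expanding $\prod_{p\in F}(I-V(p)V(p)^*)$ via Nica-covariance and compressing to $\mathcal{H}$ is exactly the argument given, and your remark that the commuting projections make the product positive is correct. The problem is $(3)\Rightarrow(1)$, which is the substantial direction and which your proposal only sketches; the sketch contains two genuine gaps. First, the reduction to the case $\vee F\neq\emptyset$ does not work. The relation ``$p_i\vee p_j\neq\emptyset$'' is not transitive, so the equivalence relation it generates can put all of $F$ into a single block even when $\vee F=\emptyset$ (e.g.\ $p_1\vee p_2\neq\emptyset$ and $p_2\vee p_3\neq\emptyset$ but $p_1\vee p_3=\emptyset$). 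Consequently you cannot pick $r\in\vee F$ and realize the joins $s_{ij}$ as left factors of $r$; the paper's proof never assumes $\vee F\neq\emptyset$ and instead carries the convention $TT^*(\vee U)=0$ for $\vee U=\emptyset$ through the whole computation.

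Second, the central combinatorial step --- ``expand the quadratic sum \ldots until it takes the form $\langle Z(G)\xi,\xi\rangle$ for an auxiliary finite $G$'' --- is both unexecuted and, as stated, not achievable: positivity of $[K(p_i,p_j)]$ is not witnessed by a single positive operator $Z(G)$. What the paper actually does is enlarge the index set to $F_1=\{s_U:U\subseteq F_0\}$ and exhibit a Cholesky-type factorization $K[F_1]=RR^*$, where $R(A_i,A_j)=T\bigl(s_{A_i}^{-1}s_{A_j}\bigr)Z(F_{A_j})^{1/2}$ for $A_i\subseteq A_j$ (and $0$ otherwise), with $F_{A}=\{s_A^{-1}s_{A\cup\{q\}}:q\in F_0\setminus A\}$. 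This uses hypothesis \pref{thm.main.3} once for \emph{each} subset $A\subseteq F_0$ to take the square roots $Z(F_A)^{1/2}$, and then a binomial cancellation $\sum_{k=0}^m(-1)^k\binom{m}{k}=\delta_{m,0}$ to verify $RR^*=K[F_1]$. Your backup strategy (build $\mathcal{K}_0=P\otimes\mathcal{H}$ and check the pseudo-inner product is positive ``by means of the Brehmer-type condition'') merely restates the goal, since positivity of that pseudo-inner product is precisely complete positive definiteness of $K$. Without the factorization (or an equivalent argument), the implication $(3)\Rightarrow(1)$ is not established.
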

 
\begin{proof} First of all, the equivalence between \pref{thm.main.1} and \pref{thm.main.2} is shown in the Theorem \ref{thm.reg.NC}. 

To show \pref{thm.main.2} implies \pref{thm.main.3}, let $V:P\to\bh{K}$ be the minimal isometric Nica-covariant dilation for $T:P\to\bh{H}$. Consider the product $\prod_{p\in F}(I-V(p)V(p)^*)$: notice that for any subset $U\subseteq F$, by the Nica-covariance, $$\prod_{p\in U} V(p)V(p)^* = V(\vee U)V(\vee U)^*.$$

Hence, $$\prod_{p\in F}(I-V(p)V(p)^*)=\sum_{U\subseteq F} (-1)^{|U|} V(\vee U)V(\vee U)^*.$$

Now since $\mathcal{H}$ is co-invariant for $V$, we have $$P_\mathcal{H} V(\vee U)V(\vee U)^* \big|_\mathcal{H} = T(\vee U) T(\vee U)^*.$$

By restricting to $\mathcal{H}$, we have
\begin{align*}
Z(F) &=\sum_{U\subseteq F} (-1)^{|U|} T(\vee U) T(\vee U)^* \\
&= P_\mathcal{H}\Big(\sum_{U\subseteq F} (-1)^{|U|} V(\vee U) V(\vee U)^*\Big)\Big|_\mathcal{H} \\
&= P_\mathcal{H}\Big(\prod_{p\in F}(I-V(p)V(p)^*)\Big)\Big|_\mathcal{H} \geq 0
\end{align*}

Now to show \pref{thm.main.3} implies \pref{thm.main.1}, it suffices to show for any $F_0=\{p_1,\cdots,p_n\}\subset P$, the operator matrix $K[F_0]$ is positive. Now for each $U\subseteq F_0$, pick $s_U\in \vee U$. Since $$\vee \{p_i\}=\{r:rP=p_i P\}=p_i P^\ast,$$ we can pick $s_{p_i}=p_i$ for all $i$. Let $F_1=\{s_U: U\subseteq F_0\}$. $F_1$ is still a finite subset of $P$, and  $F_0\subset F_1$ since each $s_{p_i}=p_i\in F_1$. Therefore, it suffices to show $K[F_1]\geq 0$. Let us now show that $K[F_1]\geq 0$ given the condition \pref{thm.main.3}.

First, rows and columns of $K[F_1]$ are indexed by subsets of $F_0$. For any subsets $A_i,A_j\subseteq F_0$, the $(A_i,A_j)$-entry of $K[F_1]$ can be expressed as
$$K(s_{A_i}, s_{A_j}) = T\left(s_{A_i}^{-1} s \right) T\left(s_{A_j}^{-1} s\right)^* $$
for some $s\in s_{A_i}\vee s_{A_j}$ (the choice does not affect the value). By Lemma \ref{lm.union}, $$s_{A_i\cup A_j}\in \vee(A_i\cup A_j)=(\vee A_i)\vee (\vee A_j)=s_{A_i}\vee s_{A_j}.$$ 

Hence,
$$K(s_{A_i}, s_{A_j}) = T\left(s_{A_i}^{-1} s_{A_i\cup A_j} \right) T\left(s_{A_j}^{-1} s_{A_i\cup A_j}\right)^* $$

Now define an operator matrix $R$ with the same dimension as $K[F_1]$. For any subsets $A_i, A_j\subseteq F_0$, define the $(A_i,A_j)$-entry of $R$ to be $0$ if $A_i$ is not a subset of $A_j$. Otherwise, define $R(A_i,A_j)$ to be:
$$T\left(s_{A_i}^{-1} s_{A_j}\right)\Big(\sum_{A_j\subseteq U\subseteq F_0} (-1)^{|U\backslash A_j|} TT^*\left(s_{A_j}^{-1} s_U\right)\Big)^{1/2} $$

We first show that this is well defined given the Condition \pref{thm.main.3}. For a fixed $A\subseteq F_0$, let $F_0\backslash A=\{q_1,\cdots,q_k\}$ and define 
$$F_A=\{s_A^{-1} s_{A\cup\{q_j\}}: 1\leq j\leq k\}.$$

Then,
\begin{align*}
Z(F_A) =& \sum_{W\subseteq F_A} (-1)^{|W|} TT^*(\vee W) \\
=& \sum_{W_0\subseteq F_0\backslash A} (-1)^{|W_0|} TT^*\Big(\bigvee_{q\in W_0} s_A^{-1} s_{A\cup\{q\}}\Big) \\
=& \sum_{W_0\subseteq F_0\backslash A} (-1)^{|W_0|} TT^*\Big(s_A^{-1} \left(\vee (A\cup W_0)\right)\Big) \\
=& \sum_{A\subseteq U\subseteq F_0} (-1)^{|U\backslash A|} TT^*\left(s_A^{-1} (\vee U)\right) \\
=& \sum_{A\subseteq U\subseteq F_0} (-1)^{|U\backslash A|} TT^*\left(s_A^{-1} s_U \right)
\end{align*}

Therefore, $R(A_i,A_j)$ is in fact equal to $T\left(s_{A_i}^{-1} s_{A_j}\right) Z(F_{A_j})^{1/2}$, where $Z(F_{A_j})\geq 0$ by the Condition \pref{thm.main.3}. We now claim that $$K[F_1]=R\cdot R^*\geq 0.$$ 

Fix $A_i, A_j\subset F_0$ for which we compute the $(A_i,A_j)$-entry of $R\cdot R^*$, which is equal to $\sum_{U\subseteq F_0} R(A_i, U) R(A_j, U)^*$. By the construction of $R$, $R(A_i,U)R(A_j,U)^*\neq 0$ only when $A_i, A_j$ are subsets of $U$. Therefore, 
\begin{align*}
RR^*[A_i,A_j] = &\sum_{U\subseteq F_0} R(A_i, U) R(A_j, U)^* \\
=& \sum_{A_i\cup A_j\subseteq U\subseteq F_0} R(A_i, U) R(A_j, U)^* \\
=& \sum_{A_i\cup A_j\subseteq U\subseteq F_0} T\left(s_{A_i}^{-1} s_U\right) Z(F_U) T\left(s_{A_j}^{-1} s_U\right)^*
\end{align*}

Replacing $Z(F_U)$ using the earlier computation, we obtain 
\begin{align*}
&RR^*[A_i,A_j] \\
=&\sum_U\sum_{U\subseteq W} (-1)^{|W\backslash U|} T\left(s_{A_i}^{-1} s_U\right) 
    TT^*\left(s_U^{-1} s_W\right)
    T\left(s_{A_j}^{-1} s_U\right)^* \\
=& \sum_U\sum_{U\subseteq W} (-1)^{|W\backslash U|} T\left(s_{A_i}^{-1} s_W\right) 
    T\left(s_{A_j}^{-1} s_W\right)^*
\end{align*}

Consider the term $T\left(s_{A_i}^{-1} s_W\right) T\left(s_{A_j}^{-1} s_W\right)^*$ in the double summation. It occurs whenever $A_i\cup A_j \subseteq U \subseteq W$. Let $m=\left|W\backslash(A_i\cup A_j)\right|$ and $k=|W\backslash U|$, $U$ has to contain all the elements in $A_i\cup A_j$ and $m-k$ elements in $W\backslash(A_i\cup A_j)$. There are precisely ${m \choose k}$ choices of $U$. Therefore,
\begin{align*}
 & RR^*[A_i,A_j] \\
=& \sum_{W: U\subseteq W} \left(\sum_{k=0}^m (-1)^{k} {m\choose k}\right) T\left(s_{A_i}^{-1} s_W\right) 
    T\left(s_{A_j}^{-1} s_W\right)^*
\end{align*}

Notice that $$\sum_{k=0}^m (-1)^{k} {m\choose k}=\begin{cases} 1,\mbox{ if }m=0; \\0,\mbox{ otherwise}.\end{cases}$$

Hence, the only non-zero term in the summation occurs when $m=0$ and thus $W_0=A_i\cup A_j$. Therefore,
\begin{align*}
 & \sum_{U\subseteq F_0} R(A_i, U) R(A_j, U)^* \\
=& T\left(s_{A_i}^{-1} s_{W_0} \right) 
    T\left(s_{A_j}^{-1} s_{W_0}\right)^*\\
=& T\left(s_{A_i}^{-1} s_{A_i\cup A_j} \right) 
    T\left(s_{A_j}^{-1} s_{A_i\cup A_j}\right)^*\\
=& K(s_{A_i}, s_{A_j})
\end{align*}

This finishes the proof. \end{proof}

\begin{remark} As observed in \cite{BLi2014, Li2017}, the matrix $R$ is a Cholesky decomposition of the operator matrix $K[F_1]$. Given two subsets $A_i, A_j\subseteq F_0$, $R(A_i, A_j)=0$ whenever $|A_j|>|A_i|$. When $|A_j|=|A_i|$, the only case when $R(A_i, A_j)\neq 0$ is when $A_i\subseteq A_j$ and thus $A_i=A_j$. Hence by arranging $F_1=\{\vee A: A\subseteq F_0\}$ according to $|A|$ in the decreasing order, the matrix $R$ becomes a lower triangular matrix. 
\end{remark} 

As a quick corollary, every co-isometric representation of a lattice ordered semigroup is $\ast$-regular. This generalizes \cite[Corollary 3.8]{BLi2014} in the case of $\ell$-semigroups. 

\begin{corollary} Suppose that $P$ is a lattice ordered semigroup in the sense that any finite subset of $P$ has a least upper bound. If $T:P\to\bh{H}$ is a co-isometric representation (i.e. $T(p)T(p)^*=I$ for all $p\in P$), then $T$ is $\ast$-regular. 
\end{corollary}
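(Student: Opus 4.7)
The plan is to verify condition \pref{thm.main.3} of Theorem \ref{thm.main}. Since $P$ is lattice ordered, for every non-empty finite subset $U \subset P$ the set $\vee U$ is non-empty, so $TT^*(\vee U)$ is well-defined. By the co-isometric hypothesis, $T(p)T(p)^* = I$ for every $p \in P$, so $TT^*(\vee U) = I$ whenever $U \neq \emptyset$. For $U = \emptyset$, since $\bigcap_{x \in \emptyset} xP = P = eP$ we have $e \in \vee \emptyset$, and then $TT^*(\vee \emptyset) = T(e)T(e)^* = I$ because $T$ is unital.

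Consequently, for any finite subset $F \subset P$,
\begin{equation*}
Z(F) = \sum_{U \subseteq F}(-1)^{|U|} TT^*(\vee U) = \left(\sum_{U \subseteq F}(-1)^{|U|}\right) I.
\end{equation*}
When $F = \emptyset$ this yields $Z(F) = I \geq 0$. When $|F| = n \geq 1$, the binomial identity $\sum_{k=0}^{n}\binom{n}{k}(-1)^k = (1-1)^n = 0$ gives $Z(F) = 0 \geq 0$. Either way, $Z(F) \geq 0$.

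Thus condition \pref{thm.main.3} of Theorem \ref{thm.main} is satisfied, and by the equivalence with \pref{thm.main.1} the representation $T$ admits a $\ast$-regular dilation. There is essentially no obstacle here: the proof is a direct substitution into the Brehmer-type condition, with the only minor subtlety being the convention for $\vee \emptyset$, which is handled by the fact that $T$ is unital.
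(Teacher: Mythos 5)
Your proof is correct and follows essentially the same route as the paper: verify the Brehmer-type condition \pref{thm.main.3} of Theorem \ref{thm.main} by noting that the lattice order forces $\vee U\neq\emptyset$ for every $U\subseteq F$, so each term $TT^*(\vee U)=I$ and the alternating sum collapses to $0$ (or $I$ for $F=\emptyset$) by the binomial identity. Your extra remark on the convention for $\vee\emptyset$ is a harmless refinement the paper leaves implicit.
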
 

\begin{proof} It suffices to check that $T$ satisfies the Condition \pref{thm.main.3} in the Theorem \ref{thm.main}. For any finite set $F\subset P$ and any $U\subseteq F$, since $P$ is lattice ordered, $\vee U\in P$ and thus $T(\vee U)T(\vee U)^*=I$. Therefore,
$$Z(F)=\sum_{U\subseteq F} (-1)^{|U|} T(\vee U) T(\vee U)^* = \sum_{U\subseteq F} (-1)^{|U|} I= 0.\qedhere$$
\end{proof} 

\section{Descending Chain Condition}\label{sec.desc}

In general, Condition \pref{thm.main.3} in the Theorem \ref{thm.main} can be very difficult to verify since it requires $Z(F)\geq 0$ for all finite subset $P$. Our goal is to reduce it to a smaller collection of finite subsets.

\subsection{Reduction Lemmas} We first prove a few technical lemmas that helps us with the reduction.

\begin{lemma}\label{lm.base} Let $F\subseteq P$ be a finite subset.
\begin{enumerate}
\item\label{lm.base.1} If $F=\{p_1,p_2,\cdots,p_n\}$ where $p_1P=p_2P$, then let $F_0=\{p_2,\cdots,p_n\}$. Then $Z(F)=Z(F_0)$ and thus $Z(F)\geq 0$ if and only if $Z(F_0)\geq 0$.
\item\label{lm.base.2} If  $F=\{p_1,p_2,\cdots,p_n\}$ and $p_1\in P^\ast$, then $Z(F)=0$. 
\end{enumerate}
\end{lemma}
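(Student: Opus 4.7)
The strategy for both parts is the same pairing argument: write $F_0 = F \setminus \{p_1\} = \{p_2,\ldots,p_n\}$ and split the sum defining $Z(F)$ according to whether $p_1$ lies in the subset $U$, namely
\[
Z(F) = \sum_{A \subseteq F_0} (-1)^{|A|}\, TT^*(\vee A) \;-\; \sum_{A \subseteq F_0} (-1)^{|A|}\, TT^*\bigl(\vee(A \cup \{p_1\})\bigr).
\]
Everything then reduces to comparing $TT^*(\vee A)$ with $TT^*(\vee(A \cup \{p_1\}))$ and exploiting cancellations.

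For part \pref{lm.base.2}, if $p_1 \in P^\ast$ then $p_1 P = P$, so
\[
\bigcap_{x \in A \cup \{p_1\}} xP \;=\; \bigcap_{x \in A} xP,
\]
and hence $\vee(A \cup \{p_1\}) = \vee A$ for every $A \subseteq F_0$. (I should also note the edge case $A = \emptyset$: $\vee \emptyset = P^\ast$, so $TT^*(\vee \emptyset) = I$ since $T$ restricted to $P^\ast$ is unitary, which makes $TT^*$ well-defined on the equivalence class.) The two sums then coincide term by term, giving $Z(F) = 0$.

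For part \pref{lm.base.1}, I will split the second sum further according to whether $p_2 \in A$. When $p_2 \in A$, the intersection $\bigcap_{x \in A} xP$ is already contained in $p_2 P = p_1 P$, so adjoining $p_1$ has no effect and $\vee(A \cup \{p_1\}) = \vee A$; those paired terms cancel. When $p_2 \notin A$, the equality $p_1 P = p_2 P$ gives $\vee(A \cup \{p_1\}) = \vee(A \cup \{p_2\})$. I will then re-index the surviving piece of the second sum via $A' = A \cup \{p_2\}$, so that $A'$ ranges over subsets of $F_0$ that contain $p_2$ and $|A'| = |A|+1$, which absorbs the minus sign. Combining with the surviving piece of the first sum (subsets of $F_0$ not containing $p_2$) exactly reconstitutes $\sum_{A \subseteq F_0}(-1)^{|A|} TT^*(\vee A) = Z(F_0)$.

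There is no real obstacle here; the content is inclusion--exclusion bookkeeping, and the only mildly delicate step is the re-indexing in part \pref{lm.base.1} together with keeping track of the well-definedness convention $TT^*(\vee U) = TT^*(r)$ for any $r \in \vee U$ (guaranteed by $T$ being unitary on $P^\ast$, as noted just before the statement of Theorem~\ref{thm.main}). The equivalence ``$Z(F) \geq 0$ iff $Z(F_0) \geq 0$'' in \pref{lm.base.1} is immediate from the equality $Z(F) = Z(F_0)$.
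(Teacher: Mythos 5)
Your proposal is correct and follows essentially the same route as the paper: both arguments split the alternating sum according to whether $p_1$ belongs to $U$ and cancel terms in pairs using $\vee(A\cup\{p_1\})=\vee A$ (when $p_1\in P^\ast$, or when $p_2\in A$) and $\vee(A\cup\{p_1\})=\vee(A\cup\{p_2\})$ (when $p_2\notin A$). The paper organizes the pairing as $U_0\cup\{p_1\}$ versus $U_0\cup\{p_1,p_2\}$ for $U_0\subseteq\{p_3,\dots,p_n\}$, which is the same cancellation as your re-indexing by $A'=A\cup\{p_2\}$.
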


\begin{proof}  For \pref{lm.base.1}: Consider $Z(F)=\sum\limits_{U\subseteq F} (-1)^{|U|} TT^*(\vee U)^*$. For any $U_0\subseteq\{p_3,\cdots,p_n\}$ and consider the terms $U_1=\{p_1\}\cup U_0$ and $U_2=\{p_1,p_2\}\cup U_0$. Since $p_1P=p_2P$, it is clear that $\vee U_1=\vee U_2$ and $|U_2|=|U_1|+1$. Therefore, $$(-1)^{|U_1|} TT^*(\vee U_1)+(-1)^{|U_1|} TT^*(\vee U_2)=0.$$
Hence,
$$Z(F)-Z(F_0) = \sum_{p_1\in U\subset F} (-1)^{|U|} TT^*(\vee U)^* = 0.$$

For \pref{lm.base.2}: Since $p_1\in P^\ast$ is invertible, $p_1 P=P$. Hence, for any $U_0\subseteq\{p_2,\cdots,p_n\}$, $\vee U_0 = \vee \{p_1\}\cup U_0$. It follows from a similar argument that $Z(F)=0$. 
\end{proof} 

\begin{lemma}\label{lm.reduction} Let $T:P\to\bh{H}$ be a unital representation of a right LCM semigroup. Let $p_1,\cdots,p_n,q\in P$. Define:
\begin{align*}
F &= \{p_1\cdot q, p_2, \cdots, p_n\}, \\
F_1 &= \{p_1,\cdots, p_n\}, \\
F_2 &= \{q, p_1^{-1} s_2, \cdots, p_1^{-1} s_n\}. \\
\end{align*}
where $s_i\in p_1\vee p_i$ for all $2\leq i\leq n$. Here, when $p_1\vee p_i=\emptyset$, we can exclude the term $p_1^{-1} s_i$ in $F_2$. 

Then $Z(F)=Z(F_1)+T(p_1) Z(F_2) T(p_1)^*$. 
In particular, $Z(F)\geq 0$ if $Z(F_1),Z(F_2)\geq 0$. 
\end{lemma}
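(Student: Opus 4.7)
The plan is to expand both sides of the claimed identity from the definition $Z(F)=\sum_{U\subseteq F}(-1)^{|U|}TT^*(\vee U)$ and match them term by term, using Lemma~\ref{lm.pull} and the identity $TT^*(p_1 r)=T(p_1)TT^*(r)T(p_1)^*$. First I would split subsets of $F$ by whether they contain $p_1q$, and split subsets of $F_1$ by whether they contain $p_1$. Letting $U_0$ range over subsets of $\{p_2,\dots,p_n\}$, the ``no $p_1q$'' part of $Z(F)$ equals the ``no $p_1$'' part of $Z(F_1)$, so these cancel and leave
\[
Z(F)-Z(F_1) \;=\; \sum_{U_0}(-1)^{|U_0|}\bigl[TT^*(\vee(\{p_1\}\cup U_0)) - TT^*(\vee(\{p_1q\}\cup U_0))\bigr].
\]

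Next I would compute $T(p_1)Z(F_2)T(p_1)^*$ analogously by splitting subsets $W$ of $F_2$ by whether they contain $q$, then pulling $T(p_1)(\cdot)T(p_1)^*$ inside each $TT^*$ via $TT^*(p_1r)=T(p_1)TT^*(r)T(p_1)^*$ together with $p_1\cdot\vee W=\vee(p_1\cdot W)$ from Lemma~\ref{lm.left.inv}. Writing $W_0$ for a subset of $\{p_1^{-1}s_i : p_1\vee p_i\neq\emptyset\}$ indexed by some $I\subseteq\{2,\dots,n\}$, this gives
\[
T(p_1)Z(F_2)T(p_1)^* \;=\; \sum_{W_0}(-1)^{|W_0|}\bigl[TT^*(p_1\cdot\vee W_0) - TT^*(p_1\cdot\vee(\{q\}\cup W_0))\bigr].
\]
Lemma~\ref{lm.pull} directly identifies $p_1\cdot\vee(\{q\}\cup W_0) = \vee(\{p_1q\}\cup\{p_i\}_{i\in I})$. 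For the other term, since $s_iP=p_1P\cap p_iP$, one has $\bigcap_{i\in I}s_iP=p_1P\cap\bigcap_{i\in I}p_iP$, so $p_1\cdot\vee W_0 = \vee\{s_i\}_{i\in I}=\vee(\{p_1\}\cup\{p_i\}_{i\in I})$. Thus the $W_0$-sum matches the $U_0$-sum over those $U_0$ for which every $p_i\in U_0$ satisfies $p_1\vee p_i\neq\emptyset$.

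The remaining $U_0\subseteq\{p_2,\dots,p_n\}$, namely those containing some $p_i$ with $p_1\vee p_i=\emptyset$, contribute nothing to $Z(F)-Z(F_1)$: then $\vee(\{p_1\}\cup U_0)=\emptyset$ and $\vee(\{p_1q\}\cup U_0)=\emptyset$ (using $p_1qP\subseteq p_1P$), so by convention both $TT^*$ terms vanish. Consequently $Z(F)-Z(F_1)=T(p_1)Z(F_2)T(p_1)^*$, and positivity of $Z(F)$ under the hypothesis $Z(F_1),Z(F_2)\geq 0$ is immediate since $T(p_1)Z(F_2)T(p_1)^*\geq 0$. The main obstacle is purely bookkeeping: matching signs and cardinalities across the three sums and treating the empty $p_1\vee p_i$ case carefully so that the correspondence between $U_0$ and $W_0$ is genuinely a bijection after discarding the vanishing terms. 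Once Lemma~\ref{lm.pull} is applied to reinterpret the $\vee$'s on each side, the identity falls out from this reindexing.
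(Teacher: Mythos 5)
Your proposal is correct and follows essentially the same route as the paper: both split the subsets of $F$ and $F_1$ according to membership of the distinguished first element, cancel the common part, factor out $T(p_1)(\cdot)T(p_1)^*$ via $TT^*(p_1r)=T(p_1)TT^*(r)T(p_1)^*$, and use Lemma~\ref{lm.pull} (together with Lemma~\ref{lm.left.inv}) to identify the resulting joins with those appearing in $Z(F_2)$. Your treatment of the case $p_1\vee p_i=\emptyset$ matches the remark at the end of the paper's proof, so there is nothing further to add.
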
 

\begin{proof}  Let $F_0=\{p_2,\cdots,p_n\}$ and consider $Z(F)-Z(F_1)$:
\begin{align*}
 &Z(F)-Z(F_1)\\
=& \sum_{U\subseteq F} (-1)^{|U|} TT^*(\vee U)
- \sum_{U\subseteq F_1} (-1)^{|U|} TT^*(\vee U)
\end{align*}

The only difference between $F$ and $F_1$ is their first element, and therefore the only difference between $Z(F)$ and $Z(F_1)$ occurs when $U$ contains the first element. Hence,
\begin{align*}
 & Z(F)-Z(F_1) \\
=& \sum_{U\subseteq F_0} (\!\sminus 1\!)^{|U|+1} \left(TT^*\left(\vee (\{p_1 q\}\medcup U)\right)
   - TT^*\left(\vee (\{p_1 \}\medcup U)\right) \right) \\
=& T(p_1) \big(\!\sum_{U\subseteq F_0} (\!\sminus 1\!)^{|U|+1} TT^*\!\big(p_1^{\sminus 1}\vee\!(\{p_1 q\}\!\medcup\! U)\big)
   \!-\! TT^*\!\big(p_1^{\sminus 1} \vee\!(\{p_1\}\!\medcup\! U)\big)\!\big) T(p_1)^* \\
=& T(p_1) \big(\!\sum_{U\subseteq F_0} (\!\sminus 1\!)^{|U|+1} TT^*\!\big(q\vee\!\bigvee_{p\in U} p_1^{\sminus 1}(p_1\!\vee\! p)\big)
   \!-\! TT^*\!\big(\bigvee_{p\in U} p_1^{\sminus 1}(p_1\!\vee\! p)\big)\!\big) T(p_1)^* \\
=& T(p_1) \big(\sum_{q\in U\subseteq F_2} (\!\sminus 1\!)^{|U|} TT^*\left(\vee U\right)
   + \sum_{q\notin U\subseteq F_2} (\!\sminus 1\!)^{|U|} TT^*\left(\vee U \right) \big) T(p_1)^* \\
=& T(p_1) Z(F_2) T(p_1)^*
\end{align*}

Now it is clear that $Z(F)\geq 0$ if $Z(F_1)\geq 0$ and $Z(F_2)\geq 0$. In the case when $p_1\vee p_i=\emptyset$, $\vee U=\emptyset$ whenever $p_1,p_i\in U\subset F_1$ or $p_1q, p_i\in U\subset F$. Therefore, we can simply pretend that the term $p_1^{-1} s_i$ does not exist in $F_2$. The calculation will not be affected. \end{proof}

\begin{remark}\label{rm.reduction}
Lemma \ref{lm.reduction} allows us to reduce the positivity of $Z(F)$ to the positivity of $Z(F_1),Z(F_2)$. $F_1$ replaces the element $p_1 q\in F$ by $p_1\in F_1$ while keeping the rest of it unchanged. Moreover, since $p_1 q P\subseteq p_1 P$, take $r_1\in\vee F_1$ and $r\in\vee F$, we have $rP\subseteq r_1P$ and thus $r=r_1 v$ for some $v\in P$. For $F_2$, observe that
\begin{align*}
\vee F &= \left(p_1 q \vee p_2 \vee \cdots \vee p_n \vee e\right) \\
&= p_1 \cdot \left(q \vee \left(p_1^{-1} (p_1\vee p_2)\right) \vee \cdots \left(p_1^{-1} (p_1\vee p_n)\right)\right) \\
&= p_1 \cdot \vee F_2
\end{align*}

Intuitively, elements are `smaller' in $F_1,F_2$ compared to $F$. 
\end{remark}

\begin{remark} In the case when $T$ is an isometric Nica-covaraint representation, 
$$Z(F)=(I-TT^*(p_1q)) \cdot \prod_{i=2}^n (I-TT^*(p_i))$$

Observe that $$I-TT^*(p_1q)=(I-TT^*(p_1)) + T(p_1)(I-TT^*(q))T(p_1)^*.$$

Therefore,
\begin{align*}
Z(F)=& (I-TT^*(p_1)) \cdot \prod_{i=2}^n (I-TT^*(p_i)) \\
 & + T(p_1)\left((I-TT^*(q))\cdot \prod_{i=2}^n (I-TT^*(p_i))\right) T(p_1)^*\\
=& Z(F_1) + T(p_1) Z(F_2) T(p_1)^*.
\end{align*}
\end{remark} 

\subsection{Ore semigroup} We say the right LCM semigroup $P$ is an Ore semigroup if for any $p,q\in P$, $pP\cap qP\neq\emptyset$. In the case of quasi-lattice ordered group, this corresponds to the lattice order condition discussed in \cite{CrispLaca2002} where every finite subset $F$ of $P$ always has a least upper bound. 

\begin{definition} We say that $P$ satisfies \emph{the descending chain condition} if there is no infinite sequence $x_n\in P$ and $y_n\notin P^\ast$ so that $x_n=x_{n+1} y_n$ or $x_n=y_n x_{n+1}$.

An element $x\in P$ is called \emph{minimal} if $x\notin P^\ast$ and whenever $x=yz$ for $y,z\in P$, either $y\in P^\ast$ or $z\in P^\ast$. We let $P_{min}$ be the set of all minimal elements in $P$.
\end{definition} 

Intuitively, $P$ has the descending chain property if we cannot take way non-invertible factors from each $x\in P$ from the left or the right infinitely many times. 

\begin{remark} In the case when $(G,P)$ is a quasi-lattice ordered group, the descending chain condition is saying there is no infinite sequence $x_n$ so that $x_{n+1} < x_n$ (i.e. when there is $y_n\neq e$, $x_n=x_{n+1} y_n$) or $x_{n+1} <_r x_n$ (i.e. when there is $y_n\neq e$, $x_n= y_n x_{n+1}$). We are not sure if the descending chain property of the partial order $<$ (or $<_r$) alone would be sufficient. 
\end{remark} 

Suppose $P$ satisfies the descending chain condition, it is clear that $P_{min}\neq\emptyset$ since otherwise we can build an infinite descending chain starting from any element $x\neq e$. It turns out that testing subsets of $P_{min}$ is sufficient for Condition \pref{thm.main.3} in the Theorem \ref{thm.main}. 

\begin{proposition}\label{pn.desc.reduction} Let $P$ be a right LCM Ore semigroup that satisfies the descending chain condition. Suppose $Z(F)\geq 0$ for all finite $F\subset P_{min}$. Then $Z(F)\geq 0$ for all finite $F\subset P$. 
\end{proposition}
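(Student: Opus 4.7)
The plan is a double well-founded induction driven by the descending chain condition. Define a relation $\prec$ on $P$ by $y\prec x$ iff $x=yz$ or $x=zy$ for some $z\in P\setminus P^\ast$; the descending chain condition is precisely the assertion that $\prec$ is well-founded, so the associated ordinal-valued rank
$$h(x)=\sup\{h(y)+1:y\prec x\}$$
is well defined by transfinite recursion. A direct check using left cancellativity shows $h(x)=0$ iff $x\in P^\ast$, and $h(x)=1$ iff $x\in P_{min}$. For a finite subset $F\subset P$, let $\mu(F)$ denote the multiset $\{h(p):p\in F\}$ under the (well-founded) Dershowitz--Manna multiset order. I will prove $Z(F)\geq 0$ by well-founded induction on the lexicographic pair $(h(\vee F),\mu(F))$, which is well-founded since each coordinate is.

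The base cases are immediate: if $F=\emptyset$ then $Z(F)=I$; if $F$ contains an invertible element then $Z(F)=0$ by Lemma~\ref{lm.base}(2); and if $F\subseteq P_{min}$, the hypothesis of the proposition gives $Z(F)\geq 0$. Otherwise some element of $F$, which I relabel as $p_1$, is non-invertible and non-minimal, so it admits a factorization $p_1=a\cdot b$ with $a,b\notin P^\ast$. Applying Lemma~\ref{lm.reduction} with its ``$p_1$'' and ``$q$'' taken to be $a$ and $b$ yields
$$Z(F)=Z(F_1)+T(a)\,Z(F_2)\,T(a)^\ast,$$
where $F_1=\{a,p_2,\dots,p_n\}$ and $F_2=\{b,\,a^{-1}s_2,\dots,a^{-1}s_n\}$ for some $s_i\in a\vee p_i$; these $s_i$ exist since $P$ is Ore. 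For $F_2$, the Remark following Lemma~\ref{lm.reduction} gives $\vee F=a\cdot(\vee F_2)$, so $\vee F_2\prec \vee F$ and therefore $h(\vee F_2)<h(\vee F)$; the outer induction hypothesis yields $Z(F_2)\geq 0$. For $F_1$, the inclusion $abP\subseteq aP$ gives $\vee F\in(\vee F_1)P$, whence $h(\vee F_1)\leq h(\vee F)$; if this inequality is strict, the outer hypothesis applies, and if equality holds then $a\prec p_1$ forces $\mu(F_1)<\mu(F)$ in multiset order, so the inner hypothesis applies. Either way $Z(F_1)\geq 0$, and therefore $Z(F)\geq 0$.

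The main technical obstacle is the asymmetry between the two summands produced by Lemma~\ref{lm.reduction}. The move $F\to F_1$ strictly decreases the multiset of heights (since $a\prec p_1$) but need not strictly decrease $\vee F$, while $F\to F_2$ strictly decreases $\vee F$ via the identity $\vee F=p_1\cdot(\vee F_2)$ from the Remark but introduces brand-new elements $a^{-1}s_i$ whose individual heights cannot be controlled. Bundling the two measures into the single lexicographic pair $(h(\vee F),\mu(F))$ is exactly what allows each of the two summands to fall strictly under the induction hypothesis in at least one coordinate.
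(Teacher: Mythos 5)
Your argument is correct and is essentially the paper's proof: both rest on the same decomposition $Z(F)=Z(F_1)+T(a)Z(F_2)T(a)^\ast$ from Lemma \ref{lm.reduction} and terminate for the same two reasons (any $r_2\in\vee F_2$ satisfies $r_2\prec a r_2\in\vee F$, while passing to $F_1$ strictly shrinks one element of the set), the only difference being that you package the recursion as a well-founded induction on the explicit measure $(h(\vee F),\mu(F))$, where the paper instead grows a binary tree and derives its finiteness from the descending chain condition via K\"onig's lemma. The one detail you should add is that $h$ is constant on $\vee F$ (so that $h(\vee F)$ is well defined): this follows from the identity $h(xu)=h(x)$ for $u\in P^\ast$, proved by well-founded induction on $x$, since right multiplication by $u$ fixes the left-factor predecessors of $x$ and carries its right-factor predecessors onto those of $xu$ by the same right translation.
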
  

\begin{proof} Pick any finite $F\subset P$. If $F\cap P^\ast\neq\emptyset$, we have $Z(F)=0\geq 0$ by Lemma \ref{lm.base}. If $F\nsubseteq P_{min}$, we can pick some element $x\in F$ that is not minimal. Therefore, we can write $x=p_1\cdot q$ for $p_1,q\notin P^\ast$ and write $F=\{p_1q,p_2,\cdots,p_n\}$. We have $Z(F)\geq 0$ if $Z(F_1)\geq 0$ and $Z(F_2)\geq 0$ where $F_1,F_2$ are defined in the Lemma \ref{lm.reduction}. 

This process allows us to build a binary tree rooted at $F$. Let $\mathbb{F}_2^+$ be the free semigroup generated by $\{1,2\}$, and let $\epsilon\in\mathbb{F}_2^+$ be the empty word. We start with $F_\epsilon=F$. Suppose for a word $\omega\in\mathbb{F}_2^+$ where $F_\omega\nsubseteq P_{min}\cup P^\ast$, we can pick an element $x=p_1\cdot q\in F_\omega$ where $p_1,q\notin P^\ast$. This allows us to define $F_{\omega 1}$ and $F_{\omega 2}$ as in the Lemma \ref{lm.reduction}. We have $Z(F_\omega)\geq 0$ whenever $Z(F_{\omega 1})\geq 0$ and $Z(F_{\omega 2})\geq 0$. 

Suppose the binary tree is finite, its leaves contain finite subsets $\overline{F}\subset P_{min}\cup P^\ast$. We know such $\overline{F}$ satisfies $Z(\overline{F})\geq 0$ by the hypothesis (in the case when $\overline{F}\subset P_{min}$) or the Lemma \ref{lm.base} (in the case when $\overline{F}\cap P^\ast\neq\emptyset$). Therefore, it suffices to show the binary tree is finite. 

Assume otherwise that the binary tree is infinite. By the K\"{o}nig Lemma, this tree has an infinite path $s_1s_2\cdots s_n\cdots$, $s_i\in\{1,2\}$. Let $\omega_n=s_1s_2\cdots s_n$ so that $F_{\omega_n}$ are nodes in the binary tree. Pick $t_{\omega}\in \vee F_{\omega}$ for each node of the binary tree. As we observed in the Remark \ref{rm.reduction}, there exists $p_{\omega 2}\notin P^\ast$ so that $p_{\omega 2}\cdot t_{\omega 2} = t_{\omega}$ and some element $u_{\omega 1}\in P$ so that $t_{\omega 1} u_{\omega 1} = t_{\omega}$. By the descending chain condition, this implies there is only finitely many $s_i=2$ and hence there is $N$ so that $s_i=1$ for all $i>N$. 

For $n>N$, the only difference between $F_{\omega_n}$ and $F_{\omega_{n+1}}=F_{\omega_n 1}$ is an element $p_1 q\in F_{\omega_n}$ and $p_1\in F_{\omega_{n+1}}$ where $q\notin P^\ast$. By the descending chain condition again, this process cannot continue infinitely many times. This proves the binary tree has to be finite which finishes the proof. \end{proof} 

As an immediate consequence, we can replace the Condition \pref{thm.main.3} in the Theorem \ref{thm.main} by a much smaller collection of subsets when the semigroup has the descending chain property. 

\begin{theorem}\label{thm.main.desc}  Let $T:P\to\bh{H}$ be a unital representation of a right LCM Ore semigroup with the descending chain property. Let $P_{min}$ be the set of all minimal elements in $P$. The following are equivalent:
\begin{enumerate}
\item\label{thm.main.desc.1} $T$ has a $\ast$-regular dilation;
\item\label{thm.main.desc.2} $T$ has a minimal isometric Nica-covariant dilation;
\item\label{thm.main.desc.3} For any finite set $F\subset P_{min}$, $$Z(F)=\sum_{U\subseteq F} (-1)^{|U|} TT^*(\vee U)^* \geq 0.$$
\end{enumerate}
\end{theorem}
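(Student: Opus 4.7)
The plan is to observe that this theorem is essentially a direct corollary of Theorem \ref{thm.main} combined with Proposition \ref{pn.desc.reduction}, so the proof will be short and primarily a matter of assembling the implications in the right order. The equivalence of \pref{thm.main.desc.1} and \pref{thm.main.desc.2} is already established as Theorem \ref{thm.reg.NC} (or equivalently as the \pref{thm.main.1}$\Leftrightarrow$\pref{thm.main.2} part of Theorem \ref{thm.main}), so nothing needs to be added there; I would simply cite it.

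For the implication \pref{thm.main.desc.1}$\Rightarrow$\pref{thm.main.desc.3}, I would invoke Theorem \ref{thm.main}: if $T$ has a $\ast$-regular dilation, then $Z(F)\geq 0$ for every finite $F\subset P$. In particular the inequality holds for every finite $F\subset P_{min}$, which is condition \pref{thm.main.desc.3}. No extra work is needed; \pref{thm.main.desc.3} is literally a special case of \pref{thm.main.3} restricted to subsets of $P_{min}$.

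The interesting direction is \pref{thm.main.desc.3}$\Rightarrow$\pref{thm.main.desc.1}, and here the plan is to upgrade the hypothesis to the full Brehmer-type positivity and then appeal again to Theorem \ref{thm.main}. Concretely, assume $Z(F)\geq 0$ for every finite $F\subset P_{min}$. Proposition \ref{pn.desc.reduction} applies verbatim, using both the descending chain condition and the Ore property, and yields $Z(F)\geq 0$ for every finite $F\subset P$. This is exactly condition \pref{thm.main.3} of Theorem \ref{thm.main}, so $T$ has a $\ast$-regular dilation, giving \pref{thm.main.desc.1}.

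There is no real obstacle in the theorem itself: the nontrivial content has been absorbed into the reduction lemmas (Lemma \ref{lm.base}, Lemma \ref{lm.reduction}) and Proposition \ref{pn.desc.reduction}, where a binary-tree argument combined with K\"onig's lemma and the descending chain condition does the actual work. The only care I would take in the write-up is to explicitly note the logical structure: \pref{thm.main.desc.1}$\Leftrightarrow$\pref{thm.main.desc.2} by Theorem \ref{thm.reg.NC}, \pref{thm.main.desc.1}$\Rightarrow$\pref{thm.main.desc.3} is immediate from Theorem \ref{thm.main} by restriction, and \pref{thm.main.desc.3}$\Rightarrow$\pref{thm.main.desc.1} uses Proposition \ref{pn.desc.reduction} followed by Theorem \ref{thm.main}. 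In other words, the proposed proof is essentially a single paragraph of bookkeeping that chains together the three previously-established results.
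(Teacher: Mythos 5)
Your proposal is correct and matches the paper exactly: the paper gives no separate proof, presenting the theorem as an immediate consequence of Theorem \ref{thm.main} together with Proposition \ref{pn.desc.reduction}, which is precisely the chain of implications you assemble. Nothing is missing.
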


\subsection{Non-Ore Semigroup}

In the case of a right LCM semigroup that fails to satisfy the Ore condition, the proof of the Proposition \ref{pn.desc.reduction} fails due to the fact that $\vee F$ can be $\emptyset$. Nevertheless, a similar argument can be applied.

\begin{definition} We say a subset $P_0$ of a right LCM semigroup a minimal set if 
\begin{enumerate}
\item $P_{min}\subseteq P_0$
\item For any $x\in P_{min}$ and $y\in P_0$, we have $$x^{-1}(x\vee y)\subseteq P_0\cup P^\ast.$$ 
\end{enumerate}
\end{definition} 

It is clear that $P_0=P$ is always a minimal set. However, in many cases, we can choose $P_0$ to be a much smaller set. 

\begin{proposition}\label{pn.desc.reduction.2} Let $P$ be a right LCM semigroup that satisfies the descending chain condition. Let $P_0$ be any minimal set of $P$. Suppose $Z(F)\geq 0$ for all finite $F\subset P_0$. Then $Z(F)\geq 0$ for all finite $F\subset P$. 
\end{proposition}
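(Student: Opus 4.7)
The plan is to adapt the proof of Proposition~\ref{pn.desc.reduction} to the non-Ore setting, using the generalized minimal set $P_0$ in place of $P_{min}$.

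Let $F\subset P$ be finite. If $F\subseteq P_0\cup P^\ast$ then $Z(F)\geq 0$ follows from the hypothesis together with Lemma~\ref{lm.base}. Otherwise, pick $x\in F$ with $x\notin P_0\cup P^\ast$. Since $x\notin P^\ast$, the descending chain condition (applied to iterated factorization of the left factor in any decomposition $x=y\cdot z$) produces $x=p_1\cdot q$ with $p_1\in P_{min}$; moreover $q\notin P^\ast$, since $p_1\in P_{min}$ together with $q\in P^\ast$ would force $x\in P_{min}\subseteq P_0$, contradicting $x\notin P_0$. Lemma~\ref{lm.reduction} then yields
$$Z(F)=Z(F_1)+T(p_1)Z(F_2)T(p_1)^\ast,$$
with $F_1=(F\setminus\{x\})\cup\{p_1\}$ and $F_2=\{q\}\cup\{p_1^{-1}s_i:p_1\vee p_i\neq\emptyset\}$. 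The minimal-set axiom guarantees $p_1^{-1}s_i\in P_0\cup P^\ast$ whenever $p_i\in P_0$. Writing $k(G)=|\{y\in G:y\notin P_0\cup P^\ast\}|$, the two key inequalities are $k(F_1)=k(F)-1$ and $k(F_2)\leq k(F)$, with $k(F_2)\leq k(F)-1$ whenever $q\in P_0\cup P^\ast$.

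Iterating this reduction produces a binary tree rooted at $F$, whose leaves are subsets of $P_0\cup P^\ast$. It remains to show the tree is finite. I commit to the following strategy at each internal node $G$: if $G$ is reached via a direction-2 edge from its parent and the $q$ introduced at that step still lies outside $P_0\cup P^\ast$, then I pick that $q$ as the element to factor at $G$; in all other cases I pick an arbitrary element of $G$ outside $P_0\cup P^\ast$. Suppose for contradiction the tree is infinite. By K\"{o}nig's Lemma there is an infinite path of internal nodes. Along this path $k$ is a non-increasing positive integer (each internal node has $k\geq 1$) and so stabilizes at some $k^\ast\geq 1$ past an index $N$. Past $N$, direction-1 edges are impossible (they strictly decrease $k$), and direction-2 edges with $q\in P_0\cup P^\ast$ are also impossible. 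Hence every step past $N$ is a direction-2 edge with $q$ problematic, and by the strategy the distinguished element factored at each such node is exactly the $q$ produced one step earlier. This yields a sequence $x_N,x_{N+1},x_{N+2},\ldots$ of elements of $P\setminus(P_0\cup P^\ast)$ satisfying $x_n=p_1^n\cdot x_{n+1}$ with each $p_1^n\in P_{min}\subseteq P\setminus P^\ast$, contradicting the descending chain condition in its left-peeling form.

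The main obstacle is precisely this termination argument. The proof of Proposition~\ref{pn.desc.reduction} in the Ore case tracked a single element $t_\omega\in\vee F_\omega$ to witness direction-2 descent, but in the general right LCM setting $\vee F_\omega$ may well be empty. The substitute here is to descend along a distinguished problematic element of $F_\omega$ itself. What makes this substitute work is exactly the minimal-set axiom: it prevents already non-problematic elements in $F\setminus\{x\}$ from turning problematic after the reduction, which is what forces the stabilization of $k$ to occur through direction-2 steps in which the incoming $q$ is still problematic, so that the strategy can chain them into a descending sequence.
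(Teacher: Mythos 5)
Your proof is correct and follows essentially the same reduction as the paper: peel a minimal left factor off a non-$P_0$ element via Lemma \ref{lm.reduction}, use the minimal-set axiom to keep the already-good elements of $F$ non-problematic in $F_2$, and induct on the count of elements outside $P_0$. The only organizational difference is in terminating the chain of direction-2 steps: the paper writes the bad element as $x=x_1\cdots x_n$ with all $x_i\in P_{min}$ up front (so that chain has length at most $n-1$ by construction), whereas you peel one minimal factor at a time and close the argument with K\"{o}nig's Lemma plus a second appeal to the descending chain condition --- both are valid.
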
  

\begin{proof} For every finite $F\subset P$, denote $m(F)=|F\cap P_0|$ which counts the number of elements in $F$ that are from $P_0$. In the case when $m(F)=|F|$, we have $F\subset P_0$ and thus $Z(F)\geq 0$. Otherwise, we will show that we can find a collection $F_1,\cdots, F_k$ with $m(F_i)>m(F)$, and $Z(F)\geq 0$ whenever $Z(F_i)\geq 0$ for all $i$. This allows us to proceed with induction with $m(F)$.

Suppose $m(F)<|F|$, pick $x\in F$ so that $x$ is not in $P_0$. Since $P$ has the descending chain condition, we can repeatedly remove a minimal element from $x$ for a finite number of times. Hence, we can write $x=x_1x_2\cdots x_n$ where $x_i\in P_{min}$. Write $F=\{x,p_2,p_3,\cdots,p_n\}$. Apply Lemma \ref{lm.reduction}, $Z(F)\geq 0$ if $Z(F_1),Z(F_2)\geq 0$, where 
\begin{align*}
F_1 &= \{x_1,p_2,p_3,\cdots,p_n\} \\
F_2 &= \{x_2x_3\cdots x_n, x_1^{-1}(x_1\vee p_2),\cdots, x_1^{-1}(x_1\vee p_n)\} 
\end{align*} 

Notice that $x_1\in P_{min}\subset P_0$, and thus $m(F_1)=m(F)+1$. For each $p_i\in F\cap P_0$, $x_1^{-1}(x_1\vee p_i)\in P_0\cup P^\ast$. If $x_1^{-1}(x_1\vee p_i)\in P^\ast$, then it follows from Lemma \ref{lm.base} that $Z(F_2)=0$. Otherwise, we must have $m(F_2)\geq m(F)$. In the case when $n=2$, $x_n\in P_{min}\subset P_0$ and we get $m(F_2)>m(F)$, which we can proceed with induction. Otherwise, notice that though $m(F_2)=m(F)$, the element $x=x_1\cdots x_n\in F$ is replaced by $x'=x_2x_3\cdots x_n$ in $F_2$, where $x'$ is a product of $(n-1)$ minimal elements. Repeat the same procedure again for $F_2$, we get $Z(F_2)\geq 0$ if $Z(F_{21})\geq 0$ and $Z(F_{22})\geq 0$, where $m(F_{21})>Z(F_2)\geq Z(F)$ and $m(F_{22})\geq m(F_2)\geq Z(F)$. The inequality is strict when $n=3$ since $x_3\in F_{22}\cap P_0$. Otherwise, repeat the same procedure again. Eventually, we can reduce the positivity of $Z(F)$ to the positivity of $Z(F_i)$ with $m(F_i)>m(F)$. This finishes the proof. \end{proof} 

We now reach a nice condition for $\ast$-regularity in the case of an arbitrary right LCM semigroup with descending chain condition. 

\begin{theorem}\label{thm.main.desc.NO}  Let $T:P\to\bh{H}$ be a unital representation of a right LCM with the descending chain condition. Let $P_0$ be a minimal set. The following are equivalent:
\begin{enumerate}
\item\label{thm.main.desc.NO.1} $T$ has a $\ast$-regular dilation;
\item\label{thm.main.desc.NO.2} $T$ has a minimal isometric Nica-covariant dilation;
\item\label{thm.main.desc.NO.3} For any finite set $F\subset P_0$, $$Z(F)=\sum_{U\subseteq F} (-1)^{|U|} TT^*(\vee U)^* \geq 0.$$
\end{enumerate}
\end{theorem}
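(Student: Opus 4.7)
The plan is that this theorem is really a synthesis of the earlier results, so the proof should be short. The equivalence of (1) and (2) is already given by Theorem \ref{thm.reg.NC}, whose statement is about arbitrary right LCM semigroups and so applies verbatim here. This leaves me with showing that condition (3) is equivalent to (1) (equivalently (2)).

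For the direction $(2)\Rightarrow(3)$, I would simply invoke Theorem \ref{thm.main}: if $T$ has a minimal isometric Nica-covariant dilation, then $Z(F)\geq 0$ for every finite subset $F\subset P$, and in particular for every finite subset $F\subset P_0$, since $P_0\subseteq P$. No new computation is needed.

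The substantive direction is $(3)\Rightarrow(1)$. Here the hard work has been done in Proposition \ref{pn.desc.reduction.2}, which was designed exactly for this purpose: it shows that under the descending chain condition, positivity of $Z(F)$ on finite subsets of any minimal set $P_0$ lifts automatically to positivity of $Z(F)$ on \emph{all} finite subsets of $P$. So given (3), I would apply Proposition \ref{pn.desc.reduction.2} to conclude that $Z(F)\geq 0$ for every finite $F\subset P$, and then apply the implication $(3)\Rightarrow(1)$ of Theorem \ref{thm.main} to conclude that $T$ has a $\ast$-regular dilation.

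In this sense the theorem is really a packaging statement: the main obstacle was proving Proposition \ref{pn.desc.reduction.2}, which required the careful inductive argument on the statistic $m(F)=|F\cap P_0|$, based on the reduction Lemma \ref{lm.reduction}, the minimal-set condition $x^{-1}(x\vee y)\subseteq P_0\cup P^\ast$, and the descending chain property to terminate the induction. Once that proposition is in hand, the present theorem follows with no additional difficulty.
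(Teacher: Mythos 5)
Your proposal is correct and is exactly the argument the paper intends: the theorem is stated without a separate proof precisely because it is the combination of Theorem \ref{thm.main} (for the equivalence of (1) and (2) and the easy direction $(2)\Rightarrow(3)$) with Proposition \ref{pn.desc.reduction.2} (to upgrade positivity on finite subsets of $P_0$ to positivity on all finite subsets of $P$). No gaps.
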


\section{Examples}\label{sec.examples}

We now examine several classes of right LCM semigroups that satisfy the descending chain condition. For each class of semigroups, we derive the corresponding conditions for $\ast$-regularity. 

\subsection{Artin Monoids} 

Artin monoids (see Example \ref{ex.Artin}) form an important class of right LCM semigroups. Their Nica-covariant representations and related $C^\ast$-algebras are studied in \cite{CrispLaca2002}. In the case of finite type or right-angled Artin monoids $P_M$, it is known that they are embedded injectively in the corresponding Artin group $G_M$, and $(G_M,P_M)$ form a quasi-lattice ordered group \cite{CrispLaca2002}. In general, Artin monoids are shown to embed injectively inside the corresponding artin group \cite{Paris2002}. The semigroup $P_M$ is known to be a right LCM semigroup, but it is unknown whether $(G_M,P_M)$ is quasi-lattice ordered.

Let $\{e_1,\cdots,e_n\}$ be the set of generators for $P_M$. Each element $p\in P_M$ can be written as $p=e_{i_1}e_{i_2}\cdots e_{i_n}$, and we define the length of $p$ to be $\ell(p)=n$ when $p$ can be expressed as a product of $n$ generators. Though there may be multiple ways to express $p$ as a product of generators, the relations on an Artin monoid are always homogeneous and thus it always takes the same number of generators to express $p$. Therefore, $\ell(p)$ is well-defined. 

\begin{lemma}\label{lm.Artin} Every Artin monoid $P_M$ has the descending chain property. The set of minimal elements is precisely the set of generators $\Gamma$. 
\end{lemma}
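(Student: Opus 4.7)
The plan is to exploit the well-defined length function $\ell:P_M\to\mathbb{N}$ arising from the homogeneity of the Artin relations $\langle e_i,e_j\rangle_{m_{i,j}}=\langle e_j,e_i\rangle_{m_{i,j}}$, both sides of which involve exactly $m_{i,j}$ generators. Because every defining relation preserves word length, $\ell$ is well-defined on $P_M$ and is additive: $\ell(xy)=\ell(x)+\ell(y)$, as the paper has already noted.

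First I would pin down the invertible elements. If $xy=e$ then $\ell(x)+\ell(y)=0$, forcing $\ell(x)=\ell(y)=0$, and the only word of length $0$ is $e$. Hence $P_M^\ast=\{e\}$, so every non-identity element has strictly positive length.

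For the descending chain condition, suppose for contradiction there were an infinite sequence $x_n\in P_M$ together with elements $y_n\notin P_M^\ast$ satisfying either $x_n=x_{n+1}y_n$ or $x_n=y_nx_{n+1}$ for every $n$. Since $P_M^\ast=\{e\}$, each $y_n\neq e$, so $\ell(y_n)\geq 1$, and additivity gives $\ell(x_n)\geq \ell(x_{n+1})+1$. This produces an infinite strictly decreasing sequence in $\mathbb{N}$, which is impossible.

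Finally, to identify $P_{min}$: if $e_i\in\Gamma$ and $e_i=yz$, then $\ell(y)+\ell(z)=1$, so one of $\ell(y),\ell(z)$ vanishes, making that factor equal to $e\in P_M^\ast$; hence every generator is minimal. Conversely, suppose $x\in P_M\setminus\{e\}$ has $\ell(x)\geq 2$ and write $x=e_{i_1}e_{i_2}\cdots e_{i_n}$ with $n\geq 2$; then setting $y=e_{i_1}$ and $z=e_{i_2}\cdots e_{i_n}$ realizes $x=yz$ with $y,z\notin P_M^\ast$, so $x$ is not minimal. The only delicate point is the additivity of $\ell$, but this is automatic from the homogeneity of the presentation, so no real obstacle is expected.
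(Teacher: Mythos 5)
Your proposal is correct and follows essentially the same route as the paper: both arguments rest on the length function $\ell$ being well-defined by the homogeneity of the Artin relations and additive under multiplication, which rules out infinite descending chains and identifies the minimal elements as the length-one elements, i.e.\ the generators. Your explicit verification that $P_M^\ast=\{e\}$ is a detail the paper leaves implicit, but it is not a different approach.
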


\begin{proof} Once we defined the length of an element $\ell(p)$ to be the number of generators requires to express $p$. We have for any $p,q\in P_M$, $\ell(pq)=\ell(p)+\ell(q)$. It is clear that we can not find infinite sequences $x_n$ and $y_n\neq e$ with $x_n=y_n x_{n+1}$ or $x_n=x_{n+1} y_n$ since otherwise, $\ell(x_n)\in\mathbb{Z}_{\geq 0}$ is strictly decreasing. 

Its set of minimal elements are precisely the set of elements with length $1$, which is exactly the set of generators. 
\end{proof}

The Artin monoids of finite types are all lattice ordered. Therefore, Theorem \ref{thm.main.desc} applies. 

\begin{theorem}\label{thm.main.Artin} A contractive representation $T$ of finite-type Artin monoids are $\ast$-regular if and only if $Z(F)\geq 0$ for all finite subset $F$ of the set of generators. 
\end{theorem}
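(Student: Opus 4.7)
The statement is essentially a direct corollary of Theorem \ref{thm.main.desc} once one assembles the hypotheses that have already been verified or noted in the excerpt. The plan is therefore to verify the three input conditions of Theorem \ref{thm.main.desc} for a finite-type Artin monoid $P_M$, and then identify the set $P_{min}$ explicitly so the quantifier in condition \pref{thm.main.desc.3} simplifies to "all finite subsets of generators."

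First I would check that $P_M$ is a right LCM Ore semigroup. The right LCM property for Artin monoids is recorded in Example \ref{ex.Artin}, and the finite-type assumption is used precisely to upgrade to the Ore condition: as noted earlier in the excerpt, in the finite-type case $(G_M, P_M)$ is lattice ordered in the sense of \cite{CrispLaca2002}, meaning every pair $p,q \in P_M$ has a common upper bound in $P_M$, so $pP_M \cap qP_M \neq \emptyset$. This is exactly the Ore condition defined at the start of Section \ref{sec.desc}. Second, Lemma \ref{lm.Artin} already delivers both the descending chain condition and the identification $P_{min} = \Gamma$, the set of standard generators.

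With these ingredients in hand, the theorem follows by a single invocation of Theorem \ref{thm.main.desc}: $T$ has a $\ast$-regular dilation if and only if $Z(F) \geq 0$ for every finite $F \subset P_{min} = \Gamma$. I would write this out as a two- or three-line proof, citing Lemma \ref{lm.Artin} for the descending chain condition and for the identification $P_{min} = \Gamma$, citing the finite-type lattice-ordered property for the Ore condition, and then applying Theorem \ref{thm.main.desc}.

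There is essentially no obstacle: all the machinery was set up in Sections \ref{sec.regular} and \ref{sec.desc}, and the present section's Lemma \ref{lm.Artin} did the one piece of Artin-monoid-specific combinatorial bookkeeping (using that relations in $P_M$ are length-homogeneous, so the length function $\ell$ is well defined and strictly decreases along any proper factorization). The only minor subtlety worth flagging in the write-up is to explicitly remark that "finite type" is what gives the Ore property; without it (for general Artin monoids) one would instead have to invoke Theorem \ref{thm.main.desc.NO} with a chosen minimal set $P_0$, which is a strictly weaker statement.
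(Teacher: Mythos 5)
Your proposal is correct and matches the paper's argument exactly: the paper proves Lemma \ref{lm.Artin} (descending chain condition and $P_{min}=\Gamma$ via the homogeneous length function), notes that finite-type Artin monoids are lattice ordered so the Ore condition holds, and then invokes Theorem \ref{thm.main.desc}. Your additional remark distinguishing the finite-type case from the general case (where Theorem \ref{thm.main.desc.NO} with a minimal set would be needed) is consistent with how the paper handles right-angled Artin monoids immediately afterwards.
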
 

\begin{example} Let us consider the Braid monoid on 3 strands: $$B_3^+=\langle e_1,e_2: e_1e_2e_1=e_2e_1e_2\rangle.$$

A representation $T:B_3^+\to\bh{H}$ is uniquely determined by $T_i=T(e_i)$, $i=1,2$, which satisfies $T_1T_2T_1=T_2T_1T_2$. Theorem \ref{thm.main.Artin} states that $T$ is $\ast$-regular if and only if $T_1,T_2$ are contractions, and
\begin{align*}
&I-T_1T_1^*-T_2T_2^*+T(e_1\vee e_2)T(e_1\vee e_2)^* \\
=& I-T_1T_1^*-T_2T_2^*+T_1T_2T_1T_1^*T_2^*T_1^* \geq 0.
\end{align*}
\end{example} 

When the Artin monoid is infinite, it is hard to find a minimal set in general. Recall that an Artin monoid is called right-angled if entries in $M$ are either $2$ or $\infty$. This is also known as the graph product of $\mathbb{N}$. Regular dilations onn right-angled Artin monoids were studied in \cite{Li2017}. 

\begin{proposition}\label{prop.raArtin.min} Given a right-angled Artin monoid $A_M^+$, the set of generators $P_{min}=\{e_1,\cdots,e_n\}$ is also a minimal set. 
\end{proposition}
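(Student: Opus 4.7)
The plan is to verify the two defining conditions of a minimal set for $P_0 = \{e_1, \ldots, e_n\}$ directly, using a case analysis on pairs of generators based on the entries of the matrix $M$. Condition (1) is immediate: Lemma \ref{lm.Artin} already identifies $P_{min}$ with the set of generators, so $P_{min} \subseteq P_0$ trivially.

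The substance lies in verifying condition (2), namely that $e_i^{-1}(e_i \vee e_j) \subseteq P_0 \cup P^\ast$ for every pair of generators $e_i, e_j$. First I would record the useful observation that $P^\ast = \{e\}$ for any Artin monoid: the length function $\ell$ is additive on products and takes values in $\mathbb{Z}_{\geq 0}$, so $uv = e$ forces $\ell(u) = \ell(v) = 0$, hence $u = v = e$. Then I would split into three cases:

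\textbf{Case $i = j$:} $e_iP \cap e_iP = e_iP$, so $e_i \vee e_i = e_iP^\ast = \{e_i\}$, and hence $e_i^{-1}(e_i \vee e_i) = \{e\} \subseteq P^\ast$.

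\textbf{Case $i \neq j$ with $m_{i,j} = 2$:} The defining relation $e_i e_j = e_j e_i$ shows $e_i e_j \in e_i P \cap e_j P$; combined with the normal form for right-angled Artin monoids (equivalently, the graph product normal form), $e_i e_j$ generates the intersection, so $e_i \vee e_j = \{e_i e_j\}$ and $e_i^{-1}(e_i \vee e_j) = \{e_j\} \subseteq P_0$.

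\textbf{Case $i \neq j$ with $m_{i,j} = \infty$:} Here there is no relation between $e_i$ and $e_j$; I would appeal to the normal form (or to the quasi-lattice property established in \cite{CrispLaca2002, Li2017} for graph products) to conclude $e_iP \cap e_jP = \emptyset$, so $e_i \vee e_j = \emptyset$ and the containment holds vacuously.

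The only step that is not essentially definitional is the computation of $e_i \vee e_j$ when $m_{i,j} = 2$ or $\infty$, and that is the main (mild) obstacle: one needs to invoke the graph product normal form to rule out nontrivial common right multiples beyond those already exhibited, or equivalently to see that when $e_i$ and $e_j$ are non-commuting generators, no word starting with $e_i$ can equal a word starting with $e_j$ in the monoid. Once this is in hand, the three cases cover every pair $(e_i, e_j)$, and $P_0 = \{e_1, \ldots, e_n\}$ satisfies both conditions of a minimal set.
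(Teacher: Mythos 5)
Your proof is correct and follows essentially the same route as the paper's: a case analysis on $m_{i,j}\in\{2,\infty\}$ showing $e_i^{-1}(e_i\vee e_j)$ is either a generator or empty. You are somewhat more careful than the paper in explicitly handling the $i=j$ case, noting $P^\ast=\{e\}$, and flagging that the normal form is needed to pin down $e_i\vee e_j$, but the substance is identical.
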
 

\begin{proof} Pick any $e_i\in P_{min}$ and $e_j$ with $e_j\neq e_i$. Either $m_{ij}=2$, in which case $e_i^{-1}(e_i\vee e_j)=e_i^{-1}e_ie_j=e_j$. Or $m_{ij}=\infty$, in which case $e_i\vee e_j=\infty$. In either case, we can see $P_{min}$ is a minimal set. 
\end{proof} 

\begin{remark} $\ast$-regular dilation on graph products of $\mathbb{N}$ was characterized in \cite{Li2017}, which can be recovered from the Proposition \ref{prop.raArtin.min} combined with the Theorem \ref{thm.main.desc.NO}.
\end{remark} 

\subsection{Thompson's Monoid}

Recall the Thompson's monoid from Example \ref{ex.quasi} \pref{ex.quasi.thompson}: $$F^+=\left<x_0,x_1,\cdots | x_nx_k=x_kx_{n+1}, k<n\right>.$$

Our result of $\ast$-regular dilation can help us generate isometric Nica-covariant representations for the Thompson's monoid. We first show that $F^+$ has the descending chain property. 

\begin{lemma}\label{lm.Thompson} Thompson's monoid $F^+$ has the descending chain property. The set of minimal elements is the set of generators $\{x_0,x_1,\cdots\}$. The set of generators is also a minimal set for $F^+$. 
\end{lemma}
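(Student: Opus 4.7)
The plan is to establish the three assertions in order: first the descending chain property via a length function, then the identification of minimal elements, then the minimal set condition via explicit LCM computations.

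First I would introduce a length function $\ell: F^+ \to \mathbb{Z}_{\geq 0}$ defined by $\ell(p) = n$ whenever $p = x_{i_1} x_{i_2} \cdots x_{i_n}$. This is well-defined because the defining relations $x_n x_k = x_k x_{n+1}$ (for $k<n$) are homogeneous of degree $2$, so every expression of $p$ as a product of generators uses the same number of generators. Clearly $\ell$ is additive, $\ell(pq) = \ell(p) + \ell(q)$, and $\ell(p) = 0$ iff $p = e$. In particular, if $uv = e$ then $\ell(u) = \ell(v) = 0$, so $u = v = e$, which shows $(F^+)^\ast = \{e\}$. The descending chain property then follows immediately: an infinite sequence $x_n = x_{n+1} y_n$ (or $x_n = y_n x_{n+1}$) with $y_n \neq e$ would give a strictly decreasing sequence $\ell(x_n) > \ell(x_{n+1})$ in $\mathbb{Z}_{\geq 0}$, which is impossible.

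Next I would identify $(F^+)_{\min}$. Since $(F^+)^\ast = \{e\}$, an element $p$ is minimal iff $p \neq e$ and every factorization $p = yz$ forces $y = e$ or $z = e$. If $\ell(p) \geq 2$ and $p = x_{i_1} \cdots x_{i_n}$ is any expression, then $y = x_{i_1}$, $z = x_{i_2}\cdots x_{i_n}$ is a factorization with $y,z \neq e$, so $p$ is not minimal. If $\ell(p) = 1$, then $p = yz$ gives $\ell(y) + \ell(z) = 1$, so one factor is $e$. Hence $(F^+)_{\min}$ is exactly the set of generators $\{x_0, x_1, \ldots\}$.

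Finally, to verify that $P_0 := \{x_0, x_1, \ldots\}$ is a minimal set, I need to show that for any two generators $x_i, x_j$, the set $x_i^{-1}(x_i \vee x_j)$ is contained in $P_0 \cup \{e\}$. The case $i = j$ is trivial since $x_i \vee x_i = \{x_i\}$ gives $x_i^{-1}(x_i \vee x_j) = \{e\}$. For $i \neq j$, without loss of generality $i < j$, so the defining relation yields $x_j x_i = x_i x_{j+1}$, placing $x_i x_{j+1}$ in $x_i F^+ \cap x_j F^+$. I would then argue this is in fact a least common right multiple: since $F^+$ is right LCM, some LCM $s$ exists and must left-divide $x_i x_{j+1}$, so $\ell(s) \in \{0,1,2\}$. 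Ruling out $\ell(s) = 0$ (since $s \in x_i F^+$ forces $\ell(s) \geq 1$) and $\ell(s) = 1$ (since $s$ would be a single generator equal to both $x_i$ and $x_j$ by the length argument, contradicting $i \neq j$) forces $\ell(s) = 2$, and since $(F^+)^\ast = \{e\}$, we get $s = x_i x_{j+1}$. Hence $x_i^{-1}(x_i \vee x_j) = \{x_{j+1}\} \subset P_0$. The case $i > j$ is symmetric using $x_i x_j = x_j x_{i+1}$.

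The only subtle step is the LCM identification; everything else is bookkeeping on the length function. That LCM calculation is the main point where one must actually use the right LCM hypothesis on $F^+$ (rather than just exhibit a common multiple), and I would explicitly invoke that $F^+$ is right LCM (as recalled in Example \ref{ex.quasi}\pref{ex.quasi.thompson}) to get existence of $s$ before pinning it down by length.
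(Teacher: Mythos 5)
Your proof is correct and follows essentially the same route as the paper: a homogeneous length function gives the descending chain property and identifies the generators as the minimal elements, and the relation $x_jx_i=x_ix_{j+1}$ gives the minimal-set property. The one place you go beyond the paper is in actually verifying that $x_ix_{j+1}$ is a \emph{least} common right multiple rather than merely a common multiple (the paper asserts this without argument), and your length-based identification of the LCM is a legitimate and welcome filling-in of that gap; note also that your computation $x_i^{-1}(x_i\vee x_j)=x_{j+1}$ is the correct one, whereas the paper's displayed values are transposed, though this does not affect the conclusion since both elements are generators.
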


\begin{proof} Similiar to the case of Artin monoids, since the relations that define the Thompson's monoid $F^+$ are homogeneous, we can define $\ell(p)=n$ if we can write $p$ as a product of $n$ generators $p=x_{i_1}x_{i_2}\cdots x_{i_n}$. It is clear that for all $p,q\in F^+$, $\ell(p)+\ell(q)=\ell(pq)$. Therefore, $F^+$ has the descending chain property (otherwise, we can obtain a strictly decreasing sequence of $\ell(p_n)$). It is clear that the set of minimal elements are precisely the set of generators. 

Now for any $x_i,x_j$, $i<j$. It follows from the relation $x_j x_i= x_i x_{j+1}$ that $x_i\vee x_j=x_jx_i$ and thus both $x_i^{-1} (x_i\vee x_j)=x_j$ and $x_j^{-1}(x_i\vee x_j)=x_{j+1}$ are again minimal elements. Therefore, $P_{min}$ is also a minimal set.  
\end{proof} 

Again, the Theorem \ref{thm.main.desc} applies to the Thompson's monoid.

\begin{theorem}\label{thm.main.Thompson} Let $T:F^+\to\bh{H}$ be a unital representation uniquely determined by the generators $T_i=T(e_i)$. Then $T$ has a $\ast$-regular dilation if and only if for any finite subset $F$ of the generators, $Z(F)\geq 0$. 
\end{theorem}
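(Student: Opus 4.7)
The plan is to reduce this statement immediately to one of the general descending chain theorems proved earlier. The Thompson's monoid $F^+$ has been shown in Lemma \ref{lm.Thompson} to satisfy the descending chain condition, to have $P_{min}=\{x_0,x_1,\cdots\}$ as its set of minimal elements, and crucially to have $P_{min}$ itself be a minimal set, since for any generators $x_i,x_j$ with $i<j$ the relation $x_jx_i=x_ix_{j+1}$ forces $x_i\vee x_j=x_jx_i$, and then $x_i^{-1}(x_i\vee x_j)=x_{j+1}\in P_{min}$ while $x_j^{-1}(x_i\vee x_j)=x_i\in P_{min}$. The right LCM property of $F^+$ is already recorded in Example \ref{ex.quasi}\pref{ex.quasi.thompson}.

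With these pieces in hand, the proof is a one-line application: invoke Theorem \ref{thm.main.desc.NO} with $P=F^+$ and $P_0=P_{min}$. The equivalence among $\ast$-regular dilation, existence of a minimal isometric Nica-covariant dilation, and positivity $Z(F)\geq 0$ for all finite $F\subset P_0$ then specializes directly to the claim, since $P_0$ here is precisely the set of generators.

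If one prefers to use Theorem \ref{thm.main.desc} instead (noting that Thompson's monoid is Ore, as any two generators admit the common multiple $x_jx_i$, and this extends to arbitrary elements by induction on length using the defining relations), then the same conclusion is reached by noting that $P_{min}$ again equals the set of generators. Either route produces the same characterization.

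There is essentially no obstacle: the entire weight of the argument has been absorbed into Lemma \ref{lm.Thompson} and into Theorem \ref{thm.main.desc.NO}. The only thing worth double-checking is that the minimal-set condition is genuinely satisfied for the full family of generators, i.e.\ that for every pair $i<j$ the element $x_i^{-1}(x_i\vee x_j)$ really lies in $P_{min}\cup(F^+)^*$; this is the computation already carried out in Lemma \ref{lm.Thompson}, so nothing further is needed.
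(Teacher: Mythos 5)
Your proposal is correct and follows essentially the same route as the paper: Lemma \ref{lm.Thompson} supplies the descending chain property and the fact that the generators form a minimal set, after which the statement is a direct specialization of Theorem \ref{thm.main.desc.NO} (or, via the Ore property, of Theorem \ref{thm.main.desc}). Nothing further is needed.
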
 

\subsection{$\mathbb{N} \rtimes \mathbb{N}^\times$} 

Recall the semigroup $\mathbb{N} \rtimes \mathbb{N}^\times$ (Example \ref{ex.Zappa} \pref{ex.Zappa.NN}) is the monoid $\{(a,p): a\in\mathbb{N}, p\in\mathbb{N}^\times\}$ with the multiplication $$(a,p)(b,q)=(a+bp,pq).$$

It embeds in $\mathbb{Q} \rtimes \mathbb{Q}^\times$, and they form a quasi-lattice ordered group \cite{LacaRaeburn2010}. The semigroup $\mathbb{N} \rtimes \mathbb{N}^\times$ has $(0,1)$ as the identity, and it is generated by $P_0=\{(1,1),(0,p): p\mbox{ is a prime}\}$ with the relations:
\begin{align*}
(0,p)(1,1) &= (p,p) = (1,1)^p (0,p), \\
(0,p)(0,q) &= (0,pq).
\end{align*}

It is obvious that $\mathbb{N} \rtimes \mathbb{N}^\times$ has the descending chain property and the set of minimal elements are precisely the set of its generators $P_{min}$. However, it is not a Ore-semigroup. For example, consider the principal right ideal generated by $(0,2)$ and $(1,2)$. For all $(b,q)\in P$, $(i,2)(b,q)=(i+2b,q)$, and thus the first coordinate always has the same parity as $i$. Therefore, $(0,2)P\cap (1,2)P=\emptyset$. In general, given $(a,m),(b,n)\in\mathbb{N} \rtimes \mathbb{N}^\times$, one can compute \cite[Remark 2.3]{LacaRaeburn2010}:
$$(a,m)\vee (b,n)=\begin{cases} (\ell, \operatorname{lcm}(m,n)): (a+m\mathbb{N})\cap(b+n\mathbb{N})\neq\emptyset; \\ \infty, (a+m\mathbb{N})\cap(b+n\mathbb{N})=\emptyset \end{cases}$$
Here, $\ell=\min\{(a+m\mathbb{N})\cap(b+n\mathbb{N})\}$. 

\begin{proposition} Let $P_0=\{(1,1),(i,p): 0\leq i<p, p\mbox{ is a prime}\}$. Then $P_0$ is a minimal set. 
\end{proposition}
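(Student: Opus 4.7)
The plan is to verify the two defining axioms of a minimal set directly. The inclusion $P_{min} \subseteq P_0$ is essentially bookkeeping: the only invertible element is $(0,1)$, and factoring $(a,m) = (b,q)(c,r) = (b+cq, qr)$ shows that any element whose second coordinate is composite is non-minimal, while any element $(a,p)$ with $a \ge 1$ and $p$ prime factors as $(1,1)(a-1,p)$ with both factors non-invertible. This identifies $P_{min} = \{(1,1)\} \cup \{(0,p) : p \text{ prime}\}$, and both families sit inside $P_0$ (taking $i = 0$ for the second family).

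For the second axiom, I would run through all pairs $(x,y) \in P_{min} \times P_0$ using the explicit join formula recalled just before the proposition. Several cases collapse immediately: if $x \vee y = \infty$ there is nothing to verify; if $x = (1,1)$ and $y = (i,q) \in P_0$ with $q$ prime, then $\mathrm{lcm}(1,q) = q$ and the minimum $\ell$ of $(1 + \mathbb{N}) \cap (i + q\mathbb{N})$ equals $i$ when $i \ge 1$ and $q$ when $i = 0$, so left-cancelling $(1,1)$ yields $(\ell - 1, q)$, which lies in $P_0$ since $0 \le \ell - 1 < q$; and if $x = (0,p)$ with $y \in \{(1,1)\} \cup \{(0,q) : q \text{ prime}\}$, a short direct calculation gives either $(1,1)$, an invertible element, or $(0,q)$, all of which lie in $P_0 \cup P^\ast$.

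The substantive case, and the one that the particular shape of $P_0$ is tailored to handle, is $x = (0,p)$ with $y = (i,q)$ where $1 \le i < q$ and $q$ is prime. When $p = q$, the arithmetic progressions $p\mathbb{N}$ and $i + p\mathbb{N}$ occupy different residue classes modulo $p$, so their intersection is empty and $x \vee y = \infty$. When $p \ne q$, the Chinese Remainder Theorem produces a unique $\ell \in \{0, 1, \dots, pq-1\}$ satisfying $\ell \equiv 0 \pmod{p}$ and $\ell \equiv i \pmod{q}$; this is the minimum of the intersection, so $x \vee y = (\ell, pq)$. Writing $\ell = pj$, the bound $\ell < pq$ forces $0 \le j < q$, giving $x^{-1}(x \vee y) = (j, q) \in P_0$.

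The main obstacle is simply organizing the case split cleanly and pinpointing the inequality $\ell < pq$ in the CRT case — this is the sole numerical fact that drives the argument, and it is precisely what prevents larger first coordinates from appearing that would force us to enlarge $P_0$ beyond the stated set.
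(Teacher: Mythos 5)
Your proof is correct and follows essentially the same route as the paper: a case analysis over pairs $(x,y)\in P_{min}\times P_0$ using the explicit join formula for $\mathbb{N}\rtimes\mathbb{N}^\times$, with the Chinese Remainder Theorem and the bound $\ell<pq$ handling the key case of distinct primes. You are in fact slightly more careful than the paper in the subcase $x=(1,1)$, $y=(i,q)$ with $i\geq 1$ (the paper only writes out the computation $x\vee y=(p,p)$, which corresponds to $i=0$); the only pair you leave implicit is the trivial $x=y=(1,1)$, which yields the invertible $(0,1)$.
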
 

\begin{proof} We need to show for all $x\in P_{min}$ and $y\in P_0$, $x^{-1}(x\vee y)\in P_0$. We divide the proof into several cases.

\textbf{Case 1}: take $y=(1,1)$. It is clear that if we take $x=(1,1)$, then $x^{-1}(x\vee y)=(0,1)\in P^\ast$. Suppose we take $x=(0,p)$ for a prime $p$, then one can check that $x\vee y=(p,p)=x\cdot (1,1)$. Thus, $x^{-1}(x\vee y)=(1,1)\in P_0$. 

\textbf{Case 2}: Take $y=(i,p)$ for some prime $p$ and $0\leq i<p$. We divide the choices of $x$ into three cases:

When $x=(1,1)$, we have $x\vee y=(p,p)=(1,1)(p-1,p)$. Therefore, $x^{-1}(x\vee y)=(p-1,p)\in P_0$. 

When $x=(0,p)$, we have $x\vee y=\infty$ unless $y=(0,p)$, in which case $x^{-1}(x\vee y)=(0,1)\in P^\ast$. 

When $x=(0,q)$ for some prime $q\neq p$, we have $x\vee y=(\ell,pq)$, where $\ell=\min\{(i+p\mathbb{N})\cap q\mathbb{N}\}$. Notice that by the Chinese remainder theorem, there always exists a solution $\ell\in[0,pq-1)$, and thus $\ell=kq$ for some $0\leq k< p$. Hence, $x\vee y=(kq,pq)=(0,q)(k,p)$ and thus $x^{-1}(x\vee y)=(k,p)\in P_0$. This finishes the last case of the proof.  
\end{proof} 

Therefore, we obtain the following characterization:

\begin{theorem}\label{thm.main.NN} Let $T:\mathbb{N} \rtimes \mathbb{N}^\times\to\bh{H}$ be a contractive representation. Then, $T$ has a $\ast$-regular dilation if and only if for any $F\subseteq P_0=\{(1,1),(i,p): 0\leq i<p, p\mbox{ is a prime}\}$
$$\sum_{U\subseteq F} (-1)^{|U|} T(\vee U)T(\vee U)^* \geq 0.$$
\end{theorem}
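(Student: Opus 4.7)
The plan is a direct application of Theorem \ref{thm.main.desc.NO}. The hypotheses of that theorem are that $\mathbb{N}\rtimes\mathbb{N}^\times$ is a right LCM semigroup with the descending chain condition and that $P_0$ is a minimal set. Both are already in hand by the time we reach the statement: the paragraph preceding the theorem explicitly observes that $\mathbb{N}\rtimes\mathbb{N}^\times$ has the descending chain property (a quick verification follows from the multiplication rule $(a,p)(b,q)=(a+pb,pq)$ together with the fact that $P^\ast=\{(0,1)\}$, since any non-invertible right factor either decreases the $\mathbb{N}^\times$-coordinate or, when that coordinate stays fixed, strictly decreases the $\mathbb{N}$-coordinate), and the proposition just before the theorem establishes that $P_0=\{(1,1),(i,p):0\leq i<p,\ p\text{ prime}\}$ is a minimal set.

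With these two facts in place, I would simply quote Theorem \ref{thm.main.desc.NO} to conclude that $T$ has a $\ast$-regular dilation if and only if $Z(F)=\sum_{U\subseteq F}(-1)^{|U|}TT^\ast(\vee U)\geq 0$ for every finite $F\subseteq P_0$, which is exactly the stated condition. There is no genuine obstacle, because the combinatorial heart of the argument — the case analysis of $x^{-1}(x\vee y)$ for $x\in P_{min}$ and $y\in P_0$, including the application of the Chinese remainder theorem for $x=(0,q)$ and $y=(i,p)$ with distinct primes — has already been absorbed into the preceding proposition. The only thing that could reasonably be called an obstacle is confirming, as noted above, that $\mathbb{N}\rtimes\mathbb{N}^\times$ is not an Ore semigroup (so that one truly needs the non-Ore version Theorem \ref{thm.main.desc.NO} rather than the Ore version Theorem \ref{thm.main.desc}); this is witnessed by the ideals $(0,2)P$ and $(1,2)P$ being disjoint, a remark already recorded in the paper.
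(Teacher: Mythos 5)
Your proposal is correct and is exactly the paper's own route: the paper states the theorem as an immediate consequence of the preceding proposition (that $P_0$ is a minimal set) together with Theorem \ref{thm.main.desc.NO}, the descending chain property having already been noted. Your added remarks on verifying the descending chain condition and on the non-Ore nature of the semigroup (via $(0,2)P\cap(1,2)P=\emptyset$) match observations already recorded in the paper.
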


\subsection{Baumslag-Solitar monoids} 

The Baumslag-Solitar monoid  $B_{n,m}$ (Example \ref{ex.Zappa} \pref{ex.Zappa.BS}) is the monoid generated by $a,b$ with the relation $ab^n=b^m a$. Each $B_{n,m}$ is a right LCM semigroup. 

\begin{lemma}\label{lm.BS} Every Baumslag-Solitar monoid $B_{n,m}$ has the descending chain property. The set of minimal elements is precisely $\{a,b\}$. 
\end{lemma}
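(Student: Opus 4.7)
The plan is to exhibit a well-founded invariant on $B_{n,m}$ that strictly decreases under both left- and right-divisibility descent, and then to read off the minimal elements from a normal form.

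First I would observe that the defining relation $ab^n=b^m a$ is homogeneous in $a$, so the $a$-length $\ell_a\colon B_{n,m}\to\mathbb{Z}_{\geq 0}$ counting the number of $a$'s in any word for $x$ is well-defined and additive: $\ell_a(xy)=\ell_a(x)+\ell_a(y)$. Since $\ell_a(a)=1$ and $\ell_a(b)=0$, this immediately forces $P^\ast=\{e\}$. Next I would establish a unique right-pushed normal form: repeatedly applying $b^m a\mapsto ab^n$, every element admits a unique expression $b^{k_0}ab^{k_1}\cdots ab^{k_r}$ with $r=\ell_a(x)$ and $k_0,\ldots,k_{r-1}\in\{0,1,\ldots,m-1\}$, and symmetrically a left-pushed form. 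I would then record the two divisibility invariants $k_0(x)=\max\{k: x\in b^k P\}$ and $k_r(x)=\max\{k: x\in Pb^k\}$; these coincide with the outer exponents in the respective normal forms but are intrinsic and do not rely on the normal form's uniqueness.

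The heart of the proof is descending chain in both directions, obtained via the lex orders $(\ell_a,k_r)$ and $(\ell_a,k_0)$ on $\mathbb{Z}_{\geq 0}^2$, which are well-founded. For a left-descent $x_n=x_{n+1}y_n$ with $y_n\neq e$: if $\ell_a(y_n)\geq 1$ then $\ell_a(x_{n+1})<\ell_a(x_n)$; otherwise $y_n=b^l$ with $l\geq 1$, so $x_n=x_{n+1}b^l$ and hence $k_r(x_n)\geq k_r(x_{n+1})+l$. Either way the lex value strictly drops. A symmetric argument for right-descent $x_n=y_nx_{n+1}$ uses $k_0$ in place of $k_r$.

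For the identification of minimal elements, I would show directly that $a$ and $b$ are minimal by using $\ell_a$ and the normal form to rule out non-trivial factorizations, and conversely that every $x\notin\{e,a,b\}$ factors non-trivially: if $\ell_a(x)=0$ then $x=b^k$ with $k\geq 2$ splits as $b\cdot b^{k-1}$; if $\ell_a(x)\geq 2$ one splits the normal form after its first $a$; if $\ell_a(x)=1$ with $x\neq a$, the normal form $b^{k_0}ab^{k_1}$ has $k_0+k_1\geq 1$ and one peels off a $b$ from the non-empty side. The point I expect to require the most care is the well-definedness and uniqueness of the normal form, which underpins the intuition; this can be bypassed by defining $k_0,k_r$ purely in terms of divisibility as above, so that only additivity of $\ell_a$ together with a short cancellation argument for pure $b$-powers is actually needed in the formal proof.
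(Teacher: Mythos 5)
Your route is genuinely different from the paper's and is essentially workable, so let me compare. The paper takes $\ell(p)$ to be the \emph{maximum} number of generators over all expressions of $p$, cites Jackson for the nontrivial fact that this maximum is finite, and then uses superadditivity $\ell(pq)\geq\ell(p)+\ell(q)$ to kill any descending chain in one stroke; the identification of $P_{min}=\{a,b\}$ is then immediate from $\ell(p)=1$. You instead exploit the fact that the relation $ab^n=b^ma$ is homogeneous in $a$ (so $\ell_a$ is an honest additive invariant, unlike total word length when $n\neq m$), and supplement it with the divisibility invariants $k_0,k_r$ to handle descent among pure $b$-powers. What your approach buys is independence from Jackson's lemma; what it costs is that the combinatorial burden reappears elsewhere, and in two places you are more optimistic than the argument allows. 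First, the finiteness of $k_0(x)$ and $k_r(x)$, and even the triviality of $P^\ast$ (your $\ell_a$ argument only shows invertibles are $b$-powers; you still need $b^k\neq e$), are not free: they require knowing that the $b$-powers form a copy of $\mathbb{N}$ and that the set of right $b$-divisors of $x$ is bounded, which in practice means proving the uniqueness of the pushed normal form (confluence of the rewriting system $b^ma\to ab^n$, or the embedding into the Baumslag--Solitar group). So the normal form cannot really be ``bypassed''; it is the load-bearing step, playing the role Jackson's lemma plays in the paper. Second, the descending chain condition as defined in the paper allows a single chain to mix left- and right-descents, so it does not suffice to show that $(\ell_a,k_r)$ drops at left-steps and $(\ell_a,k_0)$ drops at right-steps; you must also check that each invariant is non-increasing under the other type of step (which is true: $x_n=y_nx_{n+1}$ with $x_{n+1}\in Pb^k$ gives $x_n\in Pb^k$, and symmetrically for $k_0$), or use a single well-founded invariant such as $(\ell_a,\,k_0+k_r)$ in lexicographic order, which strictly drops at every step. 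With those two repairs your proof is complete, and your treatment of the minimal elements is in fact more careful than the paper's one-line assertion.
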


\begin{proof} Every elements $p\in P$ can have many different expressions as product of $a,b$. We let $\ell(p)$ to be the maximum number of $a,b$ we can use to express $p$. $\ell(p)$ is always bounded \cite[Lemma 2.2]{Jackson2002}. It is clear that for any $p,q\in B_{n,m}$, $\ell(pq)\geq \ell(p)+\ell(q)$. Therefore, whenever $p,q\neq e$, we have $\ell(p),\ell(q)<\ell(pq)$. Since $\ell(p)\geq 1$ are integer-valued, $B_{n,m}$ has the descending chain property. It is clear that the set of minimal elements are $\{a,b\}$. 
\end{proof} 

We first find a minimal set for $B_{n,m}$. 

\begin{proposition}\label{pn.BS.minimal} $P_0=\{b^i a: 0\leq i\}\cup\{b^j: 1\leq j\}$ is a minimal set for $B_{n,m}$. 
\end{proposition}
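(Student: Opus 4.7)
The plan is to verify both defining properties of a minimal set. Property (1), $P_{min}\subseteq P_0$, is immediate from Lemma~\ref{lm.BS}, since $a=b^0 a\in P_0$ and $b=b^1\in P_0$. Property (2) requires $x^{-1}(x\vee y)\subseteq P_0\cup P^\ast$ for every $x\in\{a,b\}$ and $y\in P_0$; writing $y=b^j$ with $j\geq 1$ or $y=b^i a$ with $i\geq 0$ yields four case families.

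For $x=b$ the analysis is essentially divisibility: $b\vee b^j=b^{\max(1,j)}$ with quotient in $\{e\}\cup\{b^k:k\geq 1\}\subseteq P^\ast\cup P_0$; for $i\geq 1$, $b\vee b^i a=b^i a$ with quotient $b^{i-1}a\in P_0$ (reading $b^0 a$ as $a$); and for $y=a$, the defining relation $ab^n=b^m a$ gives the candidate LCM $b\vee a=b^m a=ab^n$, with quotient $b^{m-1}a\in P_0\cup P^\ast$. For $x=a$, iterating $ab^n=b^m a$ yields $b^{km}a=ab^{kn}$ for every $k\geq 0$; writing $j=Qm+R$ and $i=qm+r$ with $0\leq R,r<m$, I would identify $a\vee b^j=b^{\lceil j/m\rceil m}a$ (giving quotient $b^{\lceil j/m\rceil n}\in P_0$) and $a\vee b^i a=b^i a$ when $r=0$ (giving quotient $b^{qn}\in P_0\cup P^\ast$, which is $e$ exactly when $i=0$). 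The residual case $y=b^i a$ with $r\geq 1$ I would settle by showing $aP\cap b^i aP=\emptyset$, so that $a\vee b^i a=\emptyset$ and the containment holds vacuously.

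The main obstacle is justifying the LCM and emptiness claims. The tool I would use is the Zappa-Sz\'ep decomposition $B_{n,m}\cong U\bowtie\langle b\rangle$ from \cite{BRRW}, where $U=\langle a,ba,\ldots,b^{m-1}a\rangle$ is free of rank $m$ (freeness is easy to check directly, since the single relation $ab^n=b^m a$ cannot equate two distinct words in the generators $b^i a$). Every element of $B_{n,m}$ has a unique normal form $u\cdot b^k$ with $u\in U$ and $k\geq 0$, and one reads off that $aP$ consists precisely of elements whose $U$-word begins with the free generator $a$, while $b^i aP$ with $i\not\equiv 0\pmod m$ consists of elements whose $U$-word begins with the distinct free generator $b^r a$, where $r=i\bmod m$. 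Freeness of $U$ then makes these sets disjoint, giving $aP\cap b^i aP=\emptyset$. For the non-trivial LCMs, the same normal form reduces the task to identifying the minimal $u\in U$ for which the Zappa-Sz\'ep left action $b^j\cdot u$ begins with $a$; iterating $b^m a=ab^n$ shows this minimum is attained at $u=b^{m-R}a$ when $R\geq 1$ (and at $u=a$ when $R=0$), producing the claimed LCM $b^{\lceil j/m\rceil m}a$ and completing the verification.
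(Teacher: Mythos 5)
Your proof is correct and follows essentially the same route as the paper: the same case analysis over $x\in\{a,b\}$ and $y\in P_0$, producing the same least common multiples, the same emptiness case $aP\cap b^iaP=\emptyset$ for $m\nmid i$, and the same quotients (your quotient $b^{qn}$ for $x=a$, $y=b^{qm}a$ is in fact stated more precisely than the paper's). The only difference is that you substantiate the LCM and disjointness claims via the Zappa--Sz\'{e}p normal form $ub^k$ with $U$ free on $\{a,ba,\ldots,b^{m-1}a\}$ — claims the paper simply asserts — and that extra justification is sound and completable.
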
 

\begin{proof} We need to show for all $x\in P_{min}$ and $y\in P_0$, $x^{-1}(x\vee y)\in P_0$. We divide the proof into several cases.

\textbf{Case 1}. Suppose $y=b^i a$ for some $0\leq i$. If $x=a$, then either $i$ is a multiple of $m$ in which case $x^{-1}(x\vee y)=b^i\in P_0\cup P^\ast$, or $i\neq 0$ in which case $x\vee y=\emptyset$. If $x=b$, then either $i=0$ in which case $x^{-1}(x\vee y)=b^{m-1}a\in P_0$, or $i\neq 0$ in which case $x^{-1}(x\vee y)=b^{i-1} a\in P_0$. 

\textbf{Case 2}. Suppose $y=b^j$ for some $1\leq j$. If $x=b$, then $x^{-1}y=b^{j-1}\in P_0\cup P^\ast$. If $x=a$, then $x\vee y=b^\ell a$ where $\ell=min\{m\mathbb{N}\cap \mathbb{N}_{\geq j}\}$. Assume $\ell=km$, we have $x\vee y=a b^{kn}$. Hence, $x^{-1}(x\vee y)=b^{kn} \in P_0$. This finishes the proof. 
\end{proof} 

In fact, we can further reduce this set $P_0$ to a smaller set. Let $P_{00}=\{b, b^i a: 0\leq i\leq m-1\}$. 

\begin{proposition} The following are equivalent:
\begin{enumerate}
\item $Z(F)\geq 0$ for all finite $F\subset P_0$.
\item $Z(F)\geq 0$ for all finite $F\subset P_{00}$. 
\end{enumerate}
\end{proposition}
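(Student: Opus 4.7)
The forward direction (1) $\Rightarrow$ (2) is immediate since $P_{00} \subseteq P_0$. For (2) $\Rightarrow$ (1), the plan is to induct on an \emph{excess} measure that vanishes precisely on $P_{00}$, reducing every finite $F \subset P_0$ to subsets of $P_{00}$ via repeated applications of Lemma \ref{lm.reduction} with the factor $p_1 = b$.

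Define $\nu \colon P_0 \to \mathbb{Z}_{\geq 0}$ by $\nu(b^j) = \max(0, j-1)$ and $\nu(b^i a) = \max(0, i - m + 1)$, so that $\nu(x) = 0$ iff $x \in P_{00}$. Extend to finite subsets by $\mu(F) = \sum_{x \in F} \nu(x)$. The base case $\mu(F) = 0$ is exactly the hypothesis in (2). For the inductive step, pick $x \in F$ with $\nu(x) \geq 1$; then either $x = b^j$ with $j \geq 2$ or $x = b^i a$ with $i \geq m$, and in either case factor $x = b \cdot q$ with $q = b^{j-1}$ or $q = b^{i-1} a$ respectively. Applying Lemma \ref{lm.reduction} with $p_1 = b$ gives $Z(F) = Z(F_1) + T(b) Z(F_2) T(b)^*$, where $F_1 = \{b\} \cup (F \setminus \{x\})$ and $F_2 = \{q\} \cup \{b^{-1}(b \vee p) : p \in F \setminus \{x\}\}$. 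Since $b \in P_{00} \subset P_0$ and the other elements of $F_1$ are unchanged, $F_1 \subseteq P_0$ with $\mu(F_1) = \mu(F) - \nu(x) < \mu(F)$, so the inductive hypothesis yields $Z(F_1) \geq 0$.

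The heart of the argument is verifying $F_2 \subseteq P_0 \cup P^\ast$ with $\mu(F_2) < \mu(F)$. For each $p \in F \setminus \{x\} \subseteq P_0$, I compute $b^{-1}(b \vee p)$ case by case: when $p = b^{j'}$ with $j' \geq 1$, we have $b^{j'} \in bP$, so $b^{-1}(b \vee p) = b^{j'-1}$, which is invertible when $j' = 1$ and in $P_0$ otherwise; when $p = b^i a$ with $i \geq 1$, similarly $b^{-1}(b \vee p) = b^{i-1} a \in P_0$; when $p = a$, the relation $ab^n = b^m a$ (as in the proof of Proposition \ref{pn.BS.minimal}) gives $b \vee a = b^m a$, and hence $b^{-1}(b \vee a) = b^{m-1} a \in P_{00}$. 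In every case $\nu(b^{-1}(b \vee p)) \leq \nu(p)$, and $\nu(q) = \nu(x) - 1$, so $\mu(F_2) \leq \mu(F) - 1$. If $F_2$ contains an invertible element, Lemma \ref{lm.base}(2) gives $Z(F_2) = 0$; otherwise the inductive hypothesis provides $Z(F_2) \geq 0$. Either way $Z(F) \geq 0$, completing the induction.

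The main subtlety is the case $p = a$: the $b$-exponent jumps from $0$ up to $m-1$, which would derail any naive length-based induction. This is precisely absorbed by the definition of $P_{00}$ containing all $b^i a$ with $0 \leq i \leq m-1$, so that the reduction lands exactly at the boundary of $P_{00}$ rather than escaping it. Once this boundary computation is in place, the rest is a straightforward bookkeeping argument on $\mu$.
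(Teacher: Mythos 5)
Your proof is correct and takes essentially the same route as the paper's: both arguments peel a factor of $b$ off an oversized element via Lemma \ref{lm.reduction} with $p_1=b$ and then check, exactly as in the computations of Proposition \ref{pn.BS.minimal}, that the resulting sets $F_1,F_2$ stay inside $P_0\cup P^\ast$ with strictly smaller complexity. The only difference is bookkeeping --- you induct on a single summed excess $\mu(F)$ while the paper inducts on the pair of maxima $(k(F),\ell(F))$ --- and your measure is, if anything, slightly cleaner.
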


\begin{proof} It is clear that $P_{00}\subset P_0$ and thus one direction is trivial. Now suppose $Z(E)\geq 0$ for all finite $E\subset P_{00}$. Now take a finite $F\subset P_0$ and let $k(F)=\max\{i: b^i a\in F\}$ and $\ell(F)=\max\{j: b^j\in F\}$. We know $F\subset P_{00}$ when $k(F)<m$ and $\ell(F)\leq 1$. 

Suppose $k(F)=k\geq m$, then write $F=\{b^k a, p_2,\cdots, p_n\}$. Due to Lemma \ref{lm.base}, we may assume all other elements in $F$ with the form $b^i a$ has $i<k$. Denote 
\begin{align*}
F_1 &= \{b, p_2,\cdot, p_n\},\\
F_2 &= \{b^{k-1}a, b^{-1}(b\vee p_2), \cdots, b^{-1}(b\vee p_n)\}.
\end{align*}
It follows from the Lemma \ref{lm.reduction} that $Z(F)\geq 0$ if both $Z(F_1)\geq 0$ and $Z(F_2)\geq 0$.  Notice that we replaced $b^k a$ by $b$ in $F_1$, so that $k(F_1)<k(F)$ and $\ell(F_1)=\ell(F)$. For $F_2$, it follows from the calculation in the Proposition \ref{pn.BS.minimal} that if $p_i=b^i a$, then $$b^{-1}(b\vee p_i)=\begin{cases} b^{i-1} a, i\geq 1 \\ b^{m-1} a, i=0 \end{cases}$$
If $p_i=b^j$, then $b^{-1}(b\vee p_i)=b^{j-1}$. Therefore, $k(F_2)<\max\{k(F),m-1\}$ and $\ell(F_2)\leq\ell(F_1)$. 

Suppose $\ell(F)=\ell>1$, then write $F=\{b^\ell, p_2,\cdots, p_n\}$. Denote 
\begin{align*}
F_1 &= \{b, p_2,\cdot, p_n\},\\
F_2 &= \{b^{\ell-1}, b^{-1}(b\vee p_2), \cdots, b^{-1}(b\vee p_n)\}.
\end{align*}
It follows from the Lemma \ref{lm.reduction} that $Z(F)\geq 0$ if both $Z(F_1)\geq 0$ and $Z(F_2)\geq 0$. A similar computation shows that $k(F_1)=k(F)$, $k(F_2)\leq\max\{k(F),m-1\}$, and $\ell(F_1),\ell(F_2)<\ell(F)$. 

Combining these two cases, we able to repeatedly use Lemma \ref{lm.reduction} and induction on $(k(F),\ell(F))$ to show $Z(F)\geq 0$ assuming $Z(E)\geq 0$ for all finite $E\subset P_{00}$. 
\end{proof} 

\begin{theorem}\label{thm.main.BS} Let $B_{n,m}$ be a Baumslag-Solitar monoid for $n,m\geq 1$ and let $a,b$ be its generators. Let $P_{00}=\{b, b^i a: 0\leq i\leq m-1\}$. Then $T$ is $\ast$-regular if and only if $Z(F)\geq 0$ for all finite $F\subset P_{00}$. 
\end{theorem}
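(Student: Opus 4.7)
The plan is to assemble this statement as a direct corollary of the machinery already developed: Theorem \ref{thm.main.desc.NO} together with Lemma \ref{lm.BS}, Proposition \ref{pn.BS.minimal}, and the preceding proposition do essentially all the work. My first step would be to invoke Lemma \ref{lm.BS} to see that $B_{n,m}$ satisfies the descending chain condition, so that Theorem \ref{thm.main.desc.NO} is applicable. Next, I would apply Proposition \ref{pn.BS.minimal} to identify $P_0 = \{b^i a : 0 \leq i\}\cup\{b^j : 1 \leq j\}$ as a minimal set in $B_{n,m}$, whereupon Theorem \ref{thm.main.desc.NO} immediately gives the equivalence of $\ast$-regularity for $T$ with positivity of $Z(F)$ for every finite $F \subset P_0$.

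Then I would finish by invoking the proposition directly preceding the theorem, which states that positivity of $Z(F)$ on finite subsets of $P_0$ is equivalent to positivity of $Z(F)$ on finite subsets of the smaller collection $P_{00} = \{b, b^i a : 0 \leq i \leq m-1\}$. Chaining these three equivalences then produces the claimed characterization.

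Given that all the substantive work is done beforehand, there is no real obstacle remaining for this statement: the theorem is simply the combined output of Lemma \ref{lm.BS}, Proposition \ref{pn.BS.minimal}, the preceding proposition, and Theorem \ref{thm.main.desc.NO}. The main conceptual content, namely that it suffices to test the short list $P_{00}$ rather than all of $B_{n,m}$, lives in the inductive reduction inside the preceding proposition, where Lemma \ref{lm.reduction} is used to successively replace an element $b^k a$ with $k \geq m$ by $b$ and to strip powers of $b$ down to the single generator. The theorem itself is a packaging step.
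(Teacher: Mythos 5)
Your proposal is correct and is exactly the route the paper takes: the theorem is stated as the immediate combination of Lemma \ref{lm.BS} (descending chain condition), Proposition \ref{pn.BS.minimal} (the minimal set $P_0$), Theorem \ref{thm.main.desc.NO}, and the preceding proposition reducing $P_0$ to $P_{00}$. You also correctly use the non-Ore version (Theorem \ref{thm.main.desc.NO}) rather than Theorem \ref{thm.main.desc}, which matters since $B_{n,m}$ can fail the Ore condition.
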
 

%

\section{The Graph Product of Right LCM Semigroups}\label{sec.gp}

Let $\Gamma=(V,E)$ be a countable simple undirected graph (i.e. the vertex set $V$ is countable, and there is no 1-loop or multiple edges in the graph). Suppose $P=(P_v)_{v\in V}$ is a countable collection of right LCM semigroups. The graph product $\Gamma_{v\in V} P_v$ is the semigroup defined by taking the free product $\ast_{v\in V} P_v$ modulo the relation $p\in P_v$ commutes with $q\in P_u$ whenever $(u,v)$ is an edge in the graph $\Gamma$. For simplicity, we shall denote $P_\Gamma=\Gamma_{v\in V} P_v$. 

The graph product of groups was first studied in Green's thesis \cite{Green1990}. Subsequently, it is used to construct new quasi-lattice ordered groups \cite{CrispLaca2002}. It is shown that a graph product of quasi-lattice ordered groups is also quasi-lattice ordered \cite[Theorem 10]{CrispLaca2002}. This is generalized to graph products of right LCM semigroups where it is shown that a graph product of right LCM semigroups is still right LCM \cite[Theorem 2.6]{FK2009} (though the original statement concerns left LCM semigroups, this can be easily translated into right LCM semigroups). 

Regular dilation for representations of graph product of $\mathbb{N}$ is recently studied in \cite{Li2017}, where it unifies the Brehmer's dilation on $\mathbb{N}^k$ and the Frazho-Bunce-Popescu's dilation on $\mathbb{F}_k^+$. We would now like to extend this result further to graph product of right LCM semigroups.

Given $x\in P_\Gamma$, if we can write $x=x_1x_2\cdots x_n$ where each $e\neq x_j\in P_{v_j}$, this is called an expression of $x$. Each $x_j$ is called a syllable in the expression. For $e\neq p\in\bigcup_{v\in V} P_v$, let $I(p)=v$ if $p\in P_v$. 

Let $x=x_1x_2\cdots x_n$ be an expression of $x$. Suppose $I(x_j)$ is adjacent to $I(x_{j+1})$, then $x_j x_{j+1}=x_{j+1} x_j$ and thus we can write $$x=x_1\cdots x_{j-1} x_{j+1} x_j x_{j+2} \cdots x_n.$$
This is called a shuffle of $x$. Two expressions of $x$ are called shuffle equivalent if one expression can be obtained from the other via finitely many shuffles. 

In the case when $I(x_j)=I(x_{j+1})$, we can let $x_j'=x_j x_{j+1}$ and write $$x=x_1\cdots x_{j-1} x_j' x_{j+2} \cdots x_n.$$
This is called an amalgamation. 

An expression $x=x_1\cdots x_n$ is called a reduced expression for $x$ if it is not shuffle equivalent to an expression that admits an amalgamation. Equivalently, this implies whenever $I(p_i)=I(p_j)$ for some $i<j$, there exists $i<k<j$ so that $I(p_k)$ is not adjacent to $I(p_i)$. A result of Green \cite{Green1990} states that every element $x$ has a reduced expression, and any two reduced expression of $x$ are shuffle equivalent. Therefore, one can define the $\ell(x)$ to be the number of syllables in a reduced expression of $x$. $\ell(x)$ is the least number of syllables in an expression of $x$. By convention, if $x=e$, $\ell(x)=0$. 

Given a reduced expression $x=x_1x_2\cdots x_n$, a syllable $x_i$ is called an initial syllable if we can shuffle this reduced expression as $x=x_i x_2'\cdots x_n'$. Notice that we can shuffle $x_i$ to the front if and only if $x_i$ commutes with all the syllables $x_1,\cdots, x_{i-1}$. Therefore, if $x_i, x_j$ are two distinct initial syllables of $x$, they have to commute. We call a vertex $v$ an initial vertex of $x$ if there exists an initial syllable $x_i$ of $x$ with $I(x_i)=v$.

It is clear that when $x=x_1\cdots x_n$, $x_1$ is always an initial syllable of $x$ and $v=I(x_1)$ is an initial vertex. Moreover, even if the expression $x=x_1\cdots x_n$ is not a reduced expression, $I(x_1)$ is still an initial vertex of $x$ (as long as $x_1\neq e$). This follows from the fact that $y=x_2\cdots x_n$ admits a reduced expression $y_1\cdots y_k$, and $x=x_1\cdot y_1\cdots y_k$. Either $v$ is not an initial vertex of $y$ and $x_1$ is an initial syllable, or $v$ is an initial vertex of $y$ and $x_1$ amalgamate with this initial vertex and form an initial syllable from $P_v$. 

The graph product of right LCM semigroups has some nice properties. Let us fix a simple graph $\Gamma=(V,E)$ and a collection of right LCM semigroups $(P_v)_{v\in V}$. Let their graph product be $P_\Gamma$. The next two lemmas are directly taken from \cite{FK2009}. 

\begin{lemma}[{\cite[Lemma 2.5]{FK2009}}]\label{lm.gp1} Let $e\neq p\in P_u$ and $e\neq q\in P_v$ where $(u,v)\in E$. Then $$pP_\Gamma \cap qP_\Gamma = pq P_\Gamma.$$
\end{lemma}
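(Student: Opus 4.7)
The inclusion $pqP_\Gamma\subseteq pP_\Gamma\cap qP_\Gamma$ is immediate, since the hypothesis $(u,v)\in E$ forces $p$ and $q$ to commute in $P_\Gamma$; my attention will be on the reverse inclusion. I would take $x\in pP_\Gamma\cap qP_\Gamma$, write $x=pa=qb$ with $a,b\in P_\Gamma$, and aim to produce $c$ with $a=qc$, whence $x=pqc\in pqP_\Gamma$.

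The tool I plan to use is Green's normal form theorem, which says that every element of $P_\Gamma$ admits a reduced expression, unique up to shuffle equivalence. For each vertex $w\in V$ and each $y\in P_\Gamma$, I would define the \emph{initial $P_w$-syllable} $\iota_w(y)\in P_w$ by taking any reduced expression of $y$: if some syllable in $P_w$ can be shuffled to the front, I set $\iota_w(y)$ equal to that syllable, and otherwise I set $\iota_w(y)=e$. Well-definedness comes from the observation that two distinct shuffleable $P_w$-syllables would commute and therefore admit an amalgamation, contradicting reducedness; moreover, the ordered sequence of $P_w$-syllables in any reduced expression is rigid, since two $P_w$-syllables cannot be swapped past each other without eventually amalgamating.

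The core of the argument will be two identities, both using $u\sim v$ and $u\neq v$. First, $\iota_v(pa)=\iota_v(a)$: because $p\in P_u$ with $u\neq v$, $p$ is not itself a $P_v$-syllable and cannot amalgamate with one, so the ordered sequence of $P_v$-syllables of $pa$ agrees with that of $a$; and because $p$ commutes with every syllable in $P_v$, prepending $p$ neither creates nor destroys the shuffleability of the leading $P_v$-syllable. Second, $\iota_v(qb)=q\cdot\iota_v(b)$: the leading $P_v$-syllable of $b$ (if any) can be shuffled to the front of $b$, whereupon $q$ amalgamates with it to form the leading $P_v$-syllable of $qb$. Combining the two identities with $pa=qb$ yields $\iota_v(a)=q\,\iota_v(b)\in qP_v$; writing $a=\iota_v(a)\cdot a^*$ via the shuffled reduced expression of $a$ gives $a=q\,(\iota_v(b)\,a^*)\in qP_\Gamma$, and hence $x=pa\in pqP_\Gamma$.

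The hard part will be the combinatorial bookkeeping around Green's theorem: justifying well-definedness of $\iota_v$, the rigidity of the $P_v$-subsequence, and the invariance of $\iota_v(pa)$ under prepending $p$. That last identity is delicate because it superficially resembles the lemma being proved, but it can be established by directly inspecting the $P_v$-syllables of reduced expressions, without invoking the lemma itself.
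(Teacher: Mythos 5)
Your proof is correct, but note that the paper does not actually prove this lemma: it is quoted verbatim from \cite[Lemma 2.5]{FK2009}, so there is no in-paper argument to compare against. Your route --- tracking the initial $P_v$-syllable $\iota_v$ through the two factorizations $x=pa=qb$ and extracting $\iota_v(a)=q\,\iota_v(b)$ --- is the standard normal-form argument, and it uses exactly the machinery (reduced expressions, shuffle equivalence, initial syllables and vertices) that the paper itself sets up in Section~\ref{sec.gp} and deploys in the proof of Lemma~\ref{lm.gp.technical}; your two identities $\iota_v(pa)=\iota_v(a)$ and $\iota_v(qb)=q\,\iota_v(b)$ check out, and the well-definedness of $\iota_v$ follows even more directly than you say, since a $P_v$-syllable can never be shuffled past another $P_v$-syllable ($v$ is not adjacent to itself). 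The one point deserving an explicit word is that the vertex semigroups may have nontrivial units, so an amalgamation can collapse a syllable to $e$ (for instance $p$ absorbing the initial $P_u$-syllable of $a$, or $q\,\iota_v(b)=e$); neither degeneracy disturbs your identities or the final step $a=\iota_v(a)\,a^{*}=q\,\iota_v(b)\,a^{*}$, but your phrasing implicitly assumes syllables never vanish, so this should be acknowledged.
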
 

\begin{lemma}[{\cite[Lemma 2.7]{FK2009}}]\label{lm.gp2} Let $x,y\in P_v$ for some $v\in V$. Then
\begin{enumerate}
\item $xP_v\cap yP_v =\emptyset$ if and only if $xP_\Gamma \cap y P_\Gamma = \emptyset$.
\item If $xP_v\cap yP_v = zP_v$ (i.e. $z\in x\vee y$), then $xP_\Gamma\cap yP_\Gamma = z P_\Gamma.$
\end{enumerate}
\end{lemma}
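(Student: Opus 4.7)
The plan is to reduce intersections in $P_\Gamma$ to intersections in $P_v$ by extracting an ``initial $v$-part'' from any common right multiple. The main tool will be Green's normal form theorem cited earlier in this section: every $w\in P_\Gamma$ has a reduced expression, and any two reduced expressions of $w$ are shuffle-equivalent. Using this, for each $w\in P_\Gamma$ I would define an initial $v$-syllable $h_v(w)\in P_v$ by taking a reduced expression $w=w_1\cdots w_n$ and setting $h_v(w)$ to be the unique initial syllable belonging to $P_v$ if one exists, and $h_v(w)=e$ otherwise. Uniqueness follows from the observation already made in the excerpt: two distinct initial syllables must commute (and so have adjacent vertices), hence two initial syllables both from $P_v$ would amalgamate after being shuffled to the front, contradicting that the expression is reduced. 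Different reduced expressions may yield different choices of $h_v(w)$, but they agree up to right multiplication by $P_v^{\ast}$, so the right coset $h_v(w)P_v$ is well defined.

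The key technical step is to show: if $w=xa$ with $x\in P_v$ and $a\in P_\Gamma$, then $h_v(w)\in xP_v$. To see this I would take a reduced expression $a=a_1\cdots a_m$ and split into cases. If $v$ is not an initial vertex of $a$, then no $a_i\in P_v$ can be shuffled to the front of $a$, so in the expression $xa_1\cdots a_m$ no syllable can be amalgamated with $x$; thus either $x=e$ (and $h_v(w)\in P_v=xP_v$ vacuously) or $x$ itself is an initial $v$-syllable of $w$, giving $h_v(w)=x\in xP_v$. If $v$ is an initial vertex of $a$, shuffle so that $a=a_i a'$ with $a_i\in P_v$, and write $w=(xa_i)a'$; then $xa_i\in P_v$, and provided $xa_i\neq e$ it serves as an initial $v$-syllable of $w$, yielding $h_v(w)=xa_i\in xP_v$. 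The degenerate possibility $xa_i=e$ forces $x\in P_v^{\ast}$, in which case $xP_v=P_v$ and the conclusion is again trivial.

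With the key step in hand, both statements follow. For (1), the direction $xP_v\cap yP_v=\emptyset \Leftarrow xP_\Gamma\cap yP_\Gamma=\emptyset$ is immediate from $P_v\subseteq P_\Gamma$. Conversely, if $w\in xP_\Gamma\cap yP_\Gamma$, then applying the key step to both factorizations gives $h_v(w)\in xP_v\cap yP_v$, so the latter is nonempty. For (2), the containment $zP_\Gamma\subseteq xP_\Gamma\cap yP_\Gamma$ is clear from $z\in xP_v\cap yP_v$. For the reverse containment, let $w\in xP_\Gamma\cap yP_\Gamma$; the key step produces $h_v(w)\in xP_v\cap yP_v=zP_v$, so $h_v(w)=zh'$ for some $h'\in P_v$. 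Writing $w=h_v(w)w'$ in a shuffled reduced expression, we conclude $w=zh'w'\in zP_\Gamma$.

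The main obstacle will be bookkeeping with reduced expressions, especially invertible elements. Since $h_v(w)$ is only well defined modulo $P_v^{\ast}$, one must consistently work with the coset $h_v(w)P_v$ rather than the element itself, and handle the boundary cases where $x$ or $xa_i$ becomes invertible. The combinatorial content of why two distinct $v$-initial syllables cannot coexist in a reduced expression is also worth a careful justification, though it is implicit in Green's normal form.
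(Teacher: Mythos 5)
The paper offers no proof of this lemma: it is imported verbatim from the cited reference (the surrounding text says ``the next two lemmas are directly taken from [FK2009]''), so there is no in-paper argument to compare yours against. Judged on its own, your proof is correct and self-contained given the normal-form facts the paper already sets up in this section -- existence of reduced expressions, their uniqueness up to shuffling, and the observation that distinct initial syllables commute, which (since the graph has no loops) forces at most one initial syllable to lie in any given $P_v$. The extraction map $h_v$ is exactly the right device: the key step that $w = xa$ with $x \in P_v$ forces $h_v(w) \in xP_v$ is what makes both halves of the lemma go through, and your case split on whether $v$ is an initial vertex of $a$, including the degenerate branches where $x$ or $xa_i$ is invertible, is complete.

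Two details are worth tightening. First, $h_v(w)$ is well defined on the nose, not merely modulo $P_v^\ast$: shuffling permutes syllables without altering them, so every reduced expression of $w$ carries the same (at most one) initial $P_v$-syllable; the coset bookkeeping is harmless but unnecessary. Second, you should record explicitly why $x\,a_1\cdots a_m$ is reduced when $v$ is not an initial vertex of $a$: for any $a_j \in P_v$, if every $a_k$ with $k<j$ had vertex adjacent to $v$, then $a_j$ could be shuffled to the front of $a$ and $v$ would be an initial vertex of $a$; combined with the reducedness of $a$ itself this supplies the required ``blocking'' syllable for every same-vertex pair involving the new first letter. The same observation shows $a'$ has no initial $v$-syllable in your second case, so $(xa_i)a'$ is reduced there as well. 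With these points written out the argument is a complete standalone proof of the cited lemma.
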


Lemma \ref{lm.gp1} implies that for $e\neq p\in P_u$ and $e\neq q\in P_v$ where $(u,v)\in E$, $pq\in p\vee q$. Following the proof of \cite[Lemma 2.5]{FK2009}, one can deduce that this is true for more than $2$ vertices. Recall a finite subset $W\subset V$ is called a clique if every two vertices in $W$ are adjacent in $\Gamma$. 

\begin{lemma}\label{lm.gp.clique} If $W\subseteq V$ is a clique in $\Gamma$, and $e\neq p_v\in P_v$ for all $v\in W$. Then $\prod_{v\in W} p_v\in\vee\{p_v: v\in W\}$. In other words, $$\big(\prod_{v\in W} p_v\big) P_\Gamma =\bigcap_{v\in W} p_v P_\Gamma.$$
\end{lemma}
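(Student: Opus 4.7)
The plan is to proceed by strong induction on the clique size $|W|$. The case $|W|=1$ is trivial and $|W|=2$ is exactly Lemma \ref{lm.gp1}. For the inductive step, write $W=\{v\}\cup W_0$ with $|W_0|=n\geq 2$ and set $q=\prod_{u\in W_0} p_u$. The inductive hypothesis applied to the sub-clique $W_0$ (of size $n$) gives $\bigcap_{u\in W_0} p_u P_\Gamma = q P_\Gamma$, so the statement reduces to showing
$$p_v P_\Gamma \cap q P_\Gamma = p_v q P_\Gamma.$$
Since $v$ is adjacent to every vertex of $W_0$, the element $p_v$ commutes with each $p_u$, hence with $q$; this gives $p_v q P_\Gamma \subseteq p_v P_\Gamma \cap q P_\Gamma$ at once.

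For the reverse inclusion, I would enumerate $W_0=\{u_1,\ldots,u_n\}$ and take $r \in p_v P_\Gamma \cap q P_\Gamma$, say $r=p_v a = qb$. Because $qP_\Gamma \subseteq p_{u_1}P_\Gamma$, we have $r\in p_v P_\Gamma \cap p_{u_1}P_\Gamma$, and Lemma \ref{lm.gp1} (applied to the adjacent pair $v,u_1$) yields $r \in p_v p_{u_1} P_\Gamma$, so $r = p_v p_{u_1} c_1$ for some $c_1 \in P_\Gamma$. Substituting this into $r = p_{u_1}p_{u_2}\cdots p_{u_n} b$ and using $p_v p_{u_1} = p_{u_1} p_v$ together with left cancellativity produces $p_v c_1 = p_{u_2}\cdots p_{u_n} b$. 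Thus $p_v c_1$ lies in $p_v P_\Gamma \cap (p_{u_2}\cdots p_{u_n}) P_\Gamma$. Applying the inductive hypothesis to the clique $\{v,u_2,\ldots,u_n\}$ of size $n$, this intersection equals $p_v p_{u_2}\cdots p_{u_n} P_\Gamma$; a final left cancellation gives $c_1 \in p_{u_2}\cdots p_{u_n} P_\Gamma$, and therefore $r = p_v p_{u_1} c_1 \in p_v q P_\Gamma$, as required.

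The one technical subtlety worth flagging is that the inductive step invokes the hypothesis twice, on $W_0$ and on the reshuffled clique $\{v,u_2,\ldots,u_n\}$, both of size $n = |W|-1$, so strong induction is needed. Beyond this bookkeeping, the argument uses only Lemma \ref{lm.gp1}, the pairwise commutativity available within a clique, and the left cancellativity of $P_\Gamma$; no reasoning about reduced expressions or normal forms in $P_\Gamma$ is required.
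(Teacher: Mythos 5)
Your proof is correct, but it does not follow the paper's route, because the paper gives no self-contained argument for this lemma at all: it simply asserts that the two-vertex proof of \cite[Lemma 2.5]{FK2009} ``can be deduced'' to work for more than two vertices, which would mean re-running the reduced-expression/normal-form analysis for a product of $n$ syllables. You instead treat the two-vertex case (Lemma \ref{lm.gp1}) as a black box and bootstrap to $n$ vertices by strong induction, using only the pairwise commutation available inside a clique and the left cancellativity of $P_\Gamma$ (which holds since $P_\Gamma$ is right LCM by \cite[Theorem 2.6]{FK2009}). This is arguably cleaner: it makes the dependence on the quoted lemma explicit and requires no reasoning about normal forms. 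The computation itself checks out: from $r=p_vp_{u_1}c_1=p_{u_1}p_{u_2}\cdots p_{u_n}b$ and $p_vp_{u_1}=p_{u_1}p_v$, cancelling $p_{u_1}$ on the left gives $p_vc_1=p_{u_2}\cdots p_{u_n}b$, and after the second application of the induction hypothesis a final cancellation of $p_v$ gives $c_1\in(p_{u_2}\cdots p_{u_n})P_\Gamma$, whence $r\in p_vqP_\Gamma$. The one point you gloss over is that identifying $p_vP_\Gamma\cap(p_{u_2}\cdots p_{u_n})P_\Gamma$ with $(p_vp_{u_2}\cdots p_{u_n})P_\Gamma$ actually invokes the induction hypothesis twice more, not once: first on the clique $\{u_2,\dots,u_n\}$ of size $n-1$ to rewrite $(p_{u_2}\cdots p_{u_n})P_\Gamma$ as $\bigcap_{i\geq 2}p_{u_i}P_\Gamma$, and only then on $\{v,u_2,\dots,u_n\}$ of size $n$. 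Both sizes are strictly below $|W|$, so the strong induction you flagged covers this; it is bookkeeping, not a gap.
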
 

We now prove a key technical lemma in our analysis of the $\ast$-regular condition on graph product of right LCM semigroup. 

\begin{lemma}\label{lm.gp.technical} Let $p\in P_v$ and $x\in P_\Gamma$ so that $x\vee p\neq\emptyset$. Then there exists $s\in x\vee p$ with $\ell(p^{-1} s)\leq \ell(s)$. 
\end{lemma}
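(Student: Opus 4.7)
The plan is induction on $\ell(x)$. The base case $\ell(x)=0$ forces $x=e$; taking $s=p$ gives $\ell(p^{-1}s)=0\leq 1=\ell(s)$.

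For the inductive step I would split on whether $v$ is an initial vertex of $x$. If $v$ is an initial vertex, by reducedness there is a unique $v$-initial syllable, so I can shuffle to write $x=x_v\cdot x'$ with $x_v\in P_v\setminus\{e\}$, $\ell(x')=\ell(x)-1$, and $v$ not an initial vertex of $x'$. The inclusion $xP_\Gamma\subseteq x_vP_\Gamma$ combined with Lemma \ref{lm.gp2} forces $x_v\vee p\neq\emptyset$ in $P_v$, so I pick $r=x_v r_1=p r_2$ in $x_v\vee p$. A direct left-cancellation argument together with Lemma \ref{lm.gp2} then identifies $xP_\Gamma\cap pP_\Gamma$ with $x_v\cdot(x'P_\Gamma\cap r_1 P_\Gamma)$. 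By the induction hypothesis applied to $x'$ and $r_1\in P_v$, there is $s_1\in x'\vee r_1$ with $\ell(r_1^{-1}s_1)\leq\ell(s_1)$, and I take $s=x_v s_1\in x\vee p$, so that $p^{-1}s=r_2\cdot(r_1^{-1}s_1)$.

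If $v$ is not an initial vertex of $x$, any $s\in xP_\Gamma\cap pP_\Gamma$ has an initial vertex set containing both $v$ (coming from $s\in pP_\Gamma$) and every initial vertex of $x$ (coming from $s\in xP_\Gamma$); since initial vertices of an element of $P_\Gamma$ form a clique, every initial vertex of $x$ must be adjacent to $v$. Picking an initial syllable $x_u$ of $x$, the vertex $u\neq v$ is adjacent to $v$, so Lemma \ref{lm.gp1} lets $x_u$ commute with $p$ and gives $x_uP_\Gamma\cap pP_\Gamma=x_u p P_\Gamma$. Writing $x=x_u x''$ with $\ell(x'')=\ell(x)-1$, the bijection $t\mapsto x_u t$ (as in the proof of Lemma \ref{lm.reduction}) gives $xP_\Gamma\cap pP_\Gamma=x_u\cdot(x''P_\Gamma\cap pP_\Gamma)$. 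The induction hypothesis yields $s''\in x''\vee p$ with $\ell(p^{-1}s'')\leq\ell(s'')$, and I set $s=x_u s''\in x\vee p$, so that $p^{-1}s=x_u\cdot(p^{-1}s'')$.

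The main obstacle in both cases is the length bookkeeping through the inductive step, since left-multiplication by a single syllable changes $\ell$ by $0$ or $1$ depending on whether amalgamation with a leading same-vertex syllable occurs. In Case 2 this closes cleanly because $u\neq v$: the amalgamation behavior at vertex $u$ must be the same for $s''$ and for $p^{-1}s''$, so the inequality $\ell(p^{-1}s'')\leq\ell(s'')$ propagates after left-multiplying by $x_u$. In Case 1, to verify $\ell(r_2\cdot r_1^{-1}s_1)\leq\ell(x_v s_1)$ I would pick the representative of $x'\vee r_1$ (modulo $P_\Gamma^\ast$) so that $r_1$ appears explicitly as the initial $v$-syllable of $s_1$; this makes $p$ an initial syllable of the shuffled reduced expression of $s=x_v s_1$, so that $\ell(p^{-1}s)=\ell(s)-1$ directly, which is the desired inequality.
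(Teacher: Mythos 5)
Your inductive skeleton is exactly the paper's: induction on $\ell(x)$, a case split on whether $v$ is an initial vertex of $x$, the observation that otherwise every initial vertex of $x$ must be adjacent to $v$ because initial vertices of a common multiple form a clique, and the intersection identities $xP_\Gamma\cap pP_\Gamma=x_v\cdot(x'P_\Gamma\cap r_1P_\Gamma)$ resp.\ $x_u\cdot(x''P_\Gamma\cap pP_\Gamma)$ that reduce to an element of length $\ell(x)-1$. The gap is in what inequality you are driving the induction toward. The bound $\ell(p^{-1}s)\leq\ell(s)$ that the statement literally asserts is essentially vacuous: for \emph{any} $s\in pP_\Gamma$ one has $s=p\cdot(p^{-1}s)$ with $p$ a single syllable, and left multiplication by one syllable never decreases $\ell$ (it adds $1$ unless it amalgamates into an initial $v$-syllable, in which case $\ell$ is unchanged). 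What the paper's own proof establishes, and what is actually needed in Proposition \ref{pn.gpLCM.main} to get $c(F_2)<c(F)$, is the stronger bound $\ell(p^{-1}s)\leq\ell(x)$; the ``$\ell(s)$'' in the displayed statement is a typo for ``$\ell(x)$''. These are genuinely different: already for $x\in P_u$, $p\in P_v$ with $u\neq v$ adjacent, the only choice is $s=xp$ with $\ell(s)=\ell(x)+1$, and the slack of $1$ per element would destroy the induction on $c(F)$ in the application.

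The fix is to re-target your induction hypothesis at $\ell(p^{-1}s)\leq\ell(x)$, and then your own reduction closes with \emph{less} bookkeeping than you feared. In Case 1, $p^{-1}s=r_2\cdot(r_1^{-1}s_1)$ with $r_2\in P_v$ a single syllable and $\ell(r_1^{-1}s_1)\leq\ell(x')=\ell(x)-1$ by the strengthened hypothesis, so $\ell(p^{-1}s)\leq 1+(\ell(x)-1)=\ell(x)$; in Case 2, $p^{-1}s=x_u\cdot(p^{-1}s'')$ gives $\ell(p^{-1}s)\leq 1+\ell(x'')=\ell(x)$. In particular the delicate amalgamation analysis in your final paragraph can be discarded, which is just as well because as written it does not hold: in Case 1 you cannot in general arrange $\ell(p^{-1}s)=\ell(s)-1$, since whenever $r_2\neq e$ (or $v$ is an initial vertex of $r_1^{-1}s_1$) the syllable $p$ amalgamates into the initial $v$-syllable of $s$ and one only gets $\ell(p^{-1}s)=\ell(s)$, and there is no freedom in the choice of $s_1\in x'\vee r_1$ (it is determined up to right multiplication by units) to force $r_1$ to be the entire initial $v$-syllable of $s_1$.
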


\begin{proof} The statement is trivially true if $p=e$ since we can simply pick $s=x\in x\vee e$. Suppose otherwise, let $x=x_1 x_2\cdots x_n$ be a reduced expression of $x$ and let $x_1\in P_u$. Here, $\ell(x)=n$. Let $y=x_2\cdots x_n$ and $x=x_1 y$. First of all, since $x\vee p\neq\emptyset$, there exists $q=p\cdot p'=x\cdot x'=x_1 y\cdot x'$ for some $x',p'\in P_\Gamma$. Since $p$ and $x_1$ are in the front of this expression, $u,v$ are both initial vertices of $q$ and thus either $u=v$ or $(u,v)$ is an edge of $\Gamma$. 

Let us do an induction on $\ell(x)$. In the base case when $\ell(x)=1$, $x=x_1$ has only one syllable in its reduced expression. There are two cases: 

\underline{Case 1}: if $u=v$, then $x_1,p\in P_v$. By Lemma \ref{lm.gp2}, $xP_\Gamma\cap p P_\Gamma\neq\emptyset$ implies $xP_v\cap p P_v\neq\emptyset$. Therefore, we can pick $s\in x\vee p\subset P_v$ and $p^{-1} s\in P_v\subset P_\Gamma$. $p^{-1} s$ has only one syllable, and its length is either $0$ (when $p^{-1}s=e$) or $1$. Hence, $\ell(p^{-1}s)\leq 1= \ell(x)$. 

\underline{Case 2}: if $u\neq v$, then $(u,v)$ must be an edge of $\Gamma$. By Lemma \ref{lm.gp1}, we can pick $s=px_1\in p\vee x$ and thus $\ell(p^{-1} s)=\ell(x)$. 

Suppose now the statement holds true for all $x$ with $\ell(x)<n$. Now consider the case when $\ell(x)=n$ and $x=x_1\cdots x_n$ is an reduced expression of $x$. Let $y=x_2\cdots x_n$. There are again two cases.
 
\underline{Case 1}: if $u=v$, then $xP_\Gamma\cap p P_\Gamma\neq\emptyset$ implies $x_1 P_v\cap p P_v\neq\emptyset$. Pick $t\in x_1\vee p\in P_v$ and let $q=x_1^{-1} t\in P_v$. We first prove that $$tP_\Gamma\cap xP_\Gamma = pP_\Gamma\cap xP_\Gamma.$$

First, by Lemma \ref{lm.gp2}, $t\in x_1\vee p$ implies $tP_\Gamma=x_1 P_\Gamma\cap p P_\Gamma$. Hence $$tP_\Gamma\cap xP_\Gamma \subseteq pP_\Gamma\cap xP_\Gamma.$$
Conversely, by Lemma \ref{lm.gp2}, $$pP_\Gamma\cap xP_\Gamma\subset pP_\Gamma\cap x_1 P_\Gamma=t P_\Gamma.$$
This proves the other inclusion.

Now $t\vee x=p\vee x$. But $t=x_1 q$ and $x=x_1 y$, by Lemma \ref{lm.left.inv}, $x_1\cdot(q\vee y)=p\vee x\neq\emptyset$. In particular, $q\vee y\neq\emptyset$. Notice that $y$ is obtained by removing the initial syllable $x_1$ from a reduced expression $x=x_1y$. Hence, $\ell(y)=\ell(x)-1<n$. By the induction hypothesis, there exists $s\in q\vee y$ and $\ell(q^{-1} s)\leq\ell(y)$. 

Let $w=q^{-1} s$ and $s=qw\in q\vee y$. Let $s'=x_1 s\in x_1(q\vee y)=p\vee x$. $s'=x_1 q w=tw$ and $p^{-1}s'=(p^{-1}t)w$. The induction hypothesis gives $\ell(w)\leq \ell(y)$. Now $p^{-1}t\in P_v$ and thus $\ell(p^{-1} s')=\ell(p^{-1} t w)\leq \ell(w)+1$. Hence $$\ell(p^{-1} s')\leq\ell(w)+1\leq\ell(y)+1=\ell(x),$$
where $s'\in p\vee x$. This finishes the induction step for this case.

\underline{Case 2}: if $u\neq v$, then $(u,v)$ must be an edge of $\Gamma$ and $x_1,p$ commute. We first prove that $$x_1p P_\Gamma\cap xP_\Gamma=pP_\Gamma\cap xP_\Gamma.$$

The $\subseteq$ direction is trivial as $x_1pP_\gamma=px_1P_\Gamma\subset pP_\Gamma$. Conversely, by Lemma \ref{lm.gp1}, $$pP_\Gamma\cap xP_\Gamma\subset pP_\Gamma\cap x_1 P_\Gamma=x_1 p P_\Gamma.$$
This proves the other inclusion. 

By Lemma \ref{lm.left.inv}, $p\vee x=x_1p\vee x_1y=x_1(p\vee y)\neq\emptyset$. Hence $p\vee y\neq\emptyset$. Moreover, $\ell(y)=\ell(x)-1<n$. By the induction hypothesis, there exists $s\in p\vee y$ so that $\ell(p^{-1} s)\leq \ell(y)$. Let $w=p^{-1} s$ and $s'=x_1 s=x_1 p w\in x_1(p\vee y)=p\vee x$. Now $$\ell(p^{-1} s')=\ell(p^{-1} x_1 p w)=\ell(x_1 w)\leq \ell(w)+1\leq \ell(y)+1=\ell(x),$$
where $s'\in p\vee x$. This finishes the induction step for this case and thus the entire proof. \end{proof}  

Now consider a collection of representations $T_v:P_v\to\bh{H}$. Suppose for any edge $(u,v)$ of $\Gamma$, $T_u(p)$ commutes with $T_v(q)$ for all $p\in P_u$ and $q\in P_v$. Then we can build a representation $T:P_\Gamma\to\bh{H}$ where for any $x=x_1\cdots x_n$, $x_i\in P_{v_i}$, $$T(x)=T_{v_1}(x_1)T_{v_2}(x_2)\cdots T_{v_n}(x_n).$$

Since the commutation relations of $T_v$ coincide with the commutation relations in $P_\Gamma$, this defines a representation $T$ on the graph product $P_\Gamma$. In fact, every representation $T$ of $P_\Gamma$ arises in this way since we can simply let $T_v$ be the restriction of $T$ on $P_v$. We are interested in when the representation $T$ has $\ast$-regular dilation. 

\begin{example} Take $P_v=\mathbb{N}$ for all $v\in V$. This semigroup $P_\Gamma$ is the graph product of $\mathbb{N}$, also known as a right-angled Artin monoid as discussed previously (Example \ref{ex.quasi}\pref{ex.quasi.2}). Each representation $T_v$ of $P_v=\mathbb{N}$ is uniquely determined by the value $T_v=T_v(1_v)$. The commutation relations require that $T_u,T_v$ commute whenever $(u,v)$ is an edge of $\Gamma$. 

The $\ast$-regular dilation for such representation $T$ of graph product of $\mathbb{N}$ was the focus of \cite{Li2017}. A Brehmer-type condition is established in \cite[Theorem 2.4]{Li2017}. It is shown that the following are equivalent:

\begin{enumerate}
\item $T$ has a $\ast$-regular dilation;
\item $T$ has a minimal isometric Nica-covariant dilation;
\item For every finite $W\subset V$, $$\sum_{\substack{U\subseteq W \\ U\mbox{ is a clique}}} (-1)^{|U|} T_U T_U^*\geq 0.$$
Here, $T_U=\prod_{v\in U} T_v$.
\end{enumerate} 
\end{example} 

We would like to extend our result of $\ast$-regular dilation on graph product of $\mathbb{N}$ to graph product of any right LCM semigroup. We have derived in the Theorem \ref{thm.main} that $T$ has a $\ast$-regular dilation if and only if for every finite set $F\subset P_\Gamma$, $$Z(F)=\sum_{U\subseteq F} (-1)^{|U|} TT^*(\vee U)\geq 0.$$

The goal is to reduce $F$ further to a much smaller collection of subsets. 

\begin{proposition}\label{pn.gpLCM.main} Let $P_\Gamma$ be a graph product of a collection of right LCM smeigroups $(P_v)_{v\in V}$, and $T:P_\Gamma\to\bh{H}$ be a contractive representation. Then the following are equivalent:
\begin{enumerate}
\item\label{pn.gpLCM.main.1} For every finite set $F\subset P_\Gamma$, $Z(F)\geq 0$.
\item\label{pn.gpLCM.main.2} For every finite set $e\notin F\subset \bigcup_{v\in V} P_v$, $Z(F)\geq 0$. 
\end{enumerate}
\end{proposition}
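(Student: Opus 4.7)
The direction \pref{pn.gpLCM.main.1} $\Rightarrow$ \pref{pn.gpLCM.main.2} is immediate, since every admissible $F$ in \pref{pn.gpLCM.main.2} is already a finite subset of $P_\Gamma$. For the substantive direction \pref{pn.gpLCM.main.2} $\Rightarrow$ \pref{pn.gpLCM.main.1}, my plan is to perform strong induction on the total length $N(F) := \sum_{x \in F} \ell(x)$, where $\ell(\cdot)$ is the length of an element of $P_\Gamma$ as the number of syllables in any reduced expression. The two main tools will be Lemma \ref{lm.reduction}, which lets me split a subset whose positivity is in question into two strictly smaller pieces, and Lemma \ref{lm.gp.technical}, which lets me control lengths of the resulting elements.

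The base cases handle the easy situations: if $F$ contains an invertible element (in particular $e$), Lemma \ref{lm.base}\pref{lm.base.2} gives $Z(F) = 0$; if instead $e \notin F$ and every $x \in F$ satisfies $\ell(x) \leq 1$, then each $x$ is a single syllable, i.e.\ $F \subset \bigcup_{v \in V} P_v \setminus \{e\}$, and hypothesis \pref{pn.gpLCM.main.2} applies directly. In the inductive step, I pick any $x \in F$ with $\ell(x) \geq 2$ and decompose $x = x_1 \cdot q$ where $x_1$ is an initial syllable of $x$ (so $x_1 \in P_v$ for some $v$ and $\ell(x_1) = 1$) and $\ell(q) = \ell(x) - 1 \geq 1$. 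Writing $F = \{x, p_2, \ldots, p_n\}$, Lemma \ref{lm.reduction} reduces the positivity of $Z(F)$ to that of $Z(F_1)$ and $Z(F_2)$, where $F_1 = \{x_1, p_2, \ldots, p_n\}$ and $F_2 = \{q, x_1^{-1} s_2, \ldots, x_1^{-1} s_n\}$ for any $s_i \in x_1 \vee p_i$ (omitting terms for which $x_1 \vee p_i = \emptyset$). I will exploit the freedom to choose the $s_i$ by invoking Lemma \ref{lm.gp.technical}, which applies precisely because $x_1 \in P_v$ lies in a single vertex semigroup, to obtain $s_i$ with $\ell(x_1^{-1} s_i) \leq \ell(p_i)$. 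Then $N(F_1) = 1 + \sum_{i \geq 2} \ell(p_i) \leq N(F) - 1$ (since $\ell(x) \geq 2$) and $N(F_2) \leq \ell(q) + \sum_{i \geq 2} \ell(p_i) = N(F) - 1$, so the induction hypothesis yields $Z(F_1), Z(F_2) \geq 0$ and hence $Z(F) \geq 0$.

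The main obstacle is precisely the length control required in $F_2$: without it, the elements $x_1^{-1} s_i$ obtained from an arbitrary choice of least common right multiple could easily have length exceeding $\ell(p_i)$, and the induction on $N$ would collapse. Lemma \ref{lm.gp.technical}, whose proof carefully selects a ``short'' representative in $x_1 \vee p_i$ using the interplay between reduced expressions in $P_\Gamma$ and the right LCM structure on each $P_v$, is the key tool that circumvents this difficulty. It is also the reason that the reduction must peel off one single syllable $x_1$ at a time rather than attempting to factor $x$ in larger chunks, since the length-control lemma is only known to hold when the element being factored out lies in a single vertex semigroup.
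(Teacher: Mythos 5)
Your proof is correct and follows essentially the same route as the paper: the paper also peels off a single initial syllable $p_1\in P_v$ from an element of length at least $2$, applies Lemma \ref{lm.reduction} to pass to $F_1$ and $F_2$, and uses Lemma \ref{lm.gp.technical} to choose the representatives $s_i$ so that lengths do not grow, inducting on the quantity $c(F)=\sum_{x\in F}(\ell(x)-1)$ rather than your $N(F)=\sum_{x\in F}\ell(x)$ (an immaterial difference, as both strictly decrease). Your identification of the length control in $F_2$ as the crux, and of the need to factor out one syllable at a time, matches the paper's reasoning exactly.
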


\begin{proof} The direction \pref{pn.gpLCM.main.1} $\Rightarrow$ \pref{pn.gpLCM.main.2} is obvious. To show the converse, notice that a finite set $e\notin F\subset \bigcup_{v\in V} P_v$ if and only if every element $x\in F$ is inside some $P_v$ and thus $\ell(x)=1$ for all $x\in F$. Denote $c(F)=\sum_{x\in F} (\ell(x)-1)$. Then for a finite subset $e\notin F\subset P_\Gamma$, $c(F)\geq 0$ and $F\subset \bigcup_{v\in V} P_v$ if and only if $c(F)=0$. 

If $e\notin F$ has $c(F)>0$, then there exists $x\in F$ with $\ell(x)\geq 2$. Write $x=p_1 q$ for some $p_1\in P_v$ and $\ell(q)=\ell(x)-1$. Let $F=\{p_1q,p_2,\cdots,p_n\}$. Let 
\begin{align*}
F_1 &= \{p_1, p_2, \cdots, p_n\} \\
F_2 &= \{q, p_1^{-1} s_2, \cdots, p_1^{-1} s_n\}
\end{align*}
where $s_i\in p_1 \vee p_i$ for all $2\leq i\leq n$. By Lemma \ref{lm.reduction}, $Z(F)\geq 0$ if $Z(F_1)\geq 0$ and $Z(F_2)\geq 0$. 

Since $\ell(p_1)<\ell(p_1 q)$, we have $c(F_1)<c(F)$. By Lemma \ref{lm.gp.technical}, for each $2\leq i\leq n$, either $p_1\vee p_i=\emptyset$ or there exists $s_i\in p_1\vee p_i$ with $\ell(p_1^{-1} s_i)\leq \ell(p_i)$. Therefore, compare elements in $F$ with $F_2$: either an element $p_i$ is removed when $p_1\vee p_i=\emptyset$, or $p_i$ is replaced by $p_1^{-1} s_i$ with $\ell(p_1^{-1} s_i)\leq \ell(s_i)$. Moreover, the element $p_1 q$ in $F$ is replaced by $q$ where $\ell(q)=\ell(p_1 q)-1$. Hence, $c(F_2)<c(F)$. 

Now $c(F_1),c(F_2)<c(F)$. We can only repeat this process finitely many times. The positivity of any finite $e\notin F\subset P_\Gamma$ is therefore reduced to the positivity of sets of the form $F\subset P_\Gamma$ where $\ell(x)\leq 1$ for all $x\in F$. Notice that $Z(F)\geq 0$ whenever $e\in F$ (Lemma \ref{lm.base}\pref{lm.base.2}). Hence, the condition \pref{pn.gpLCM.main.2} is sufficient. \end{proof}

For a finite set $e\notin U\subset \bigcup_{v\in V} P_v$, we denote $I(U)=\{I(x): x\in U\}$. Suppose $(u,v)$ is not an edge of $\Gamma$, and $e\neq p\in P_u$, $e\neq q\in P_v$, then $pP_\Gamma\cap qP_\Gamma$ must be $\emptyset$ since $u,v$ are both initial vertices of any element $r\in p\vee q$. Therefore, $\vee U=\emptyset$ unless any two vertices in $U$ are adjacent to one another. In other words, $\vee U=\emptyset$ unless $I(U)$ is a clique in $\Gamma$. Hence, we can simplify $Z(F)$ as:
\begin{align*}
Z(F) &=\sum_{U\subseteq F} (-1)^{|U|} TT^*(\vee U) \\
&= \sum_{\substack{U\subseteq F \\ I(U)\mbox{ is a clique}}} (-1)^{|U|} TT^*(\vee U).
\end{align*}

Take a finite set finite $e\notin F\subset \bigcup_{v\in V} P_v$. Each $x\in F$ belongs to a certain copy of $P_v$. If $x=p_1 q$ with $p_1, q\in P_v$ and $F=\{p_1 q, p_2,\cdots, p_n\}$. Let $F_1,F_2$ be the subsets defined in Lemma \ref{lm.reduction}: 
\begin{align*}
F_1 &= \{p_1, p_2, \cdots, p_n\} \\
F_2 &= \{q, p_1^{-1} s_2, \cdots, p_1^{-1} s_n\}
\end{align*}
where $s_i\in p_1 \vee p_i$ for all $2\leq i\leq n$. For each $2\leq i\leq n$, apply the Lemma \ref{lm.gp.technical}, either $p_1\vee p_i=\emptyset$ or we can pick $s_i\in p_1\vee p_i$ with $\ell(p_1^{-1} s_i)\leq \ell(p_i)$. Hence, $F_1,F_2\subset\bigcup_{v\in V} P_v$. In the case when a semigroup $P_u$ satisfies the descending chain condition, the procedure described in the Proposition \ref{pn.desc.reduction} still applies. This can further reduce $F$ to a subset $F\subset\bigcup_{v\in V} P_v$ where every element in $F\cap P_u$ is a minimal element (i.e. $F\cap P_u=(P_u)_0$). 

Therefore, we obtain the following characterization of $\ast$-regular representations of a graph product of right LCM semigroups.

\begin{theorem}\label{thm.gp.main} Let $P_\Gamma$ be a graph product of right LCM semigroups and $T:P_\Gamma\to\bh{H}$ be a contractive representation. Then the following are equivalent:

\begin{enumerate}
\item $T$ has a $\ast$-regular dilation;
\item $T$ has a minimal isometric Nica-covariant dilation;
\item For every finite $F\subset P_\Gamma$, $$Z(F)=\sum_{U\subseteq F} (-1)^{|U|} TT^*(\vee U)\geq 0;$$
\item For every finite $e\notin F\subset \bigcup_{v\in V} P_v$, $$Z(F)=\sum_{\substack{U\subseteq F \\ I(U)\mbox{ is a clique}}} (-1)^{|U|} TT^*(\vee U)\geq 0.$$
\end{enumerate}

In particular, in the case when a $P_u$ satisfies the descending chain condition, we may assume $F\cap P_u\subset(P_u)_0$, where $(P_u)_0$ is the set of minimal elements of $P_u$. 
\end{theorem}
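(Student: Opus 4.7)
The plan is to obtain the theorem as an assembly of previously established reductions rather than a fresh argument. Since the graph product of right LCM semigroups is itself a right LCM semigroup (by the result of Fountain–Kambites cited in the paragraph preceding Lemma \ref{lm.gp1}), the equivalence of items (1), (2), and (3) is simply Theorem \ref{thm.main} applied to $P_\Gamma$; I will state this as an immediate corollary with no further work.

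Next I would handle the equivalence (3) $\Leftrightarrow$ (4). For (3) $\Rightarrow$ (4), I invoke the observation made just above the theorem statement: whenever $I(U)$ fails to be a clique in $\Gamma$ there are two non-adjacent vertices among the initial vertices of any prospective common right multiple of $U$, forcing $\vee U = \emptyset$. Hence the restricted sum in (4) agrees on the nose with $Z(F)$ for any $e \notin F \subset \bigcup_v P_v$, and (4) is the literal restriction of (3) to such $F$. The converse direction (4) $\Rightarrow$ (3) is precisely what Proposition \ref{pn.gpLCM.main} provides, so I simply cite it.

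For the final clause, I would refine the reduction inside each $P_u$ that satisfies the descending chain condition. Given a finite $e \notin F \subset \bigcup_v P_v$ with some $x \in F \cap P_u$ that is not in $(P_u)_0$, I pick a decomposition $x = p_1 q$ with $p_1, q \in P_u \setminus P_u^\ast$ and apply Lemma \ref{lm.reduction} to produce $F_1$ and $F_2$. The critical check is that $F_1, F_2$ still live in $\bigcup_v P_v$. For $F_1$ this is automatic since $p_1 \in P_u$. For $F_2$ I argue by cases on each other element $p_i \in P_v$: if $v = u$, Lemma \ref{lm.gp2} keeps $s_i \in p_1 \vee p_i$ inside $P_u$, so $p_1^{-1} s_i \in P_u$; if $(u,v) \in E$, Lemma \ref{lm.gp1} gives $s_i = p_1 p_i$ and hence $p_1^{-1} s_i = p_i \in P_v$; and if $u, v$ are distinct and non-adjacent then $p_1 \vee p_i = \emptyset$ and the entry disappears from $F_2$. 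So the procedure of Proposition \ref{pn.desc.reduction}, run only on the $P_u$-coordinate, can be carried out without leaving $\bigcup_v P_v$; termination is guaranteed by the descending chain condition on $P_u$, and it exits with $F \cap P_u \subset (P_u)_0$.

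The argument is essentially a concatenation of existing pieces, so the only subtle point I anticipate is the third paragraph's verification that the reductions respect the constraint $F \subset \bigcup_v P_v$ and the constraint on $P_u$ simultaneously; this amounts to observing that Lemma \ref{lm.gp.technical} specialized to $p \in P_u$, $x \in P_v$ collapses into the three clean cases listed above rather than producing longer words. Once this is in hand, iterating over the (finitely many) vertices $u$ with descending chain condition yields the refined conclusion and completes the proof.
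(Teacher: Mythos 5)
Your proposal is correct and follows essentially the same route as the paper: the paper likewise obtains (1)$\Leftrightarrow$(2)$\Leftrightarrow$(3) from Theorem \ref{thm.main} applied to the right LCM semigroup $P_\Gamma$, gets (3)$\Leftrightarrow$(4) from the clique observation together with Proposition \ref{pn.gpLCM.main}, and derives the final clause by running the reduction of Lemma \ref{lm.reduction} inside $\bigcup_v P_v$ with termination from the descending chain condition on $P_u$. The only cosmetic difference is that where you verify $F_2\subset\bigcup_v P_v$ by a direct three-case analysis via Lemmas \ref{lm.gp1} and \ref{lm.gp2}, the paper cites Lemma \ref{lm.gp.technical}, whose base case is exactly that analysis.
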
 

\begin{example} In the case when $P_v=\mathbb{N}$ for all $v\in V$. $P_\Gamma$ is a graph product of $\mathbb{N}$, which is generated by the generator $\{e_v: v\in V\}$ with the relation $e_ue_v=e_ve_u$ whenever $(u,v)$ is an edge of $\Gamma$. Let $T:P_\Gamma\to\bh{H}$ be a contractive representation which is uniquely determined by $T_v=T(e_v)$. For each finite $W\subset V$, if $W$ is a clique, let $T_W=\prod_{v\in W} T_W$. Each $P_v=\mathbb{N}$ satisfies the descending chain condition, and the set of minimal elements $(P_v)_0=\{e_v\}$. 

By Theorem \ref{thm.gp.main}, $T$ has a $\ast$-regular dilation if and only if for every finite $F\subseteq\{e_v: v\in V\}$, $$Z(F)=\sum_{\substack{E\subseteq F \\ I(E)\mbox{ is a clique}}} (-1)^{|E|} TT^*(\vee E)\geq 0.$$

Notice that each finite $E\subseteq\{e_v: v\in V\}$ corresponds to a finite set $U=\{v: e_v\in E\}\subset V$. It is easy to see that $I(E)=U$ and $\vee E=\prod_{v\in U} e_v$. Therefore, $T$ has a $\ast$-regular dilation if and only if for every finite $W\subset V$, $$\sum_{\substack{U\subseteq W \\ U\mbox{ is a clique}}} (-1)^{|U|} T_U T_U^*\geq 0.$$
Here, $T_U=\prod_{u\in U} T(e_u)$. This gives an another proof of \cite[Theorem 2.4]{Li2017}.
\end{example} 

We would like to consider an application of Theorem \ref{thm.gp.main}. Let $\Gamma$ be a complete graph. In other words, $(u,v)\in E$ for all $u\neq v$ in $V$. The graph product $P_\Gamma$ is simply the direct sum $\oplus_{v\in V} P_v$. 

\begin{definition} A family of contractive representation $T_v:P_v\to\bh{H}$ are called doubly commuting if for any $u\neq v$ and $p\in P_u$, $q\in P_v$, $T_u(p)$ commutes with both $T_v(q)$ and $T_v(q)^*$. 

Suppose that $\Gamma$ is a complete graph. A representation $T:P_\Gamma\to\bh{H}$ is called doubly commuting if $T_v:P_v\to\bh{H}$ given by restricting $T$ on $P_v$ form a doubly commuting family of contractive representations. 
\end{definition} 

Doubly commuting representations on products of special semigroups have been previously studied. A doubly commuting representation of $\mathbb{N}^k$ is always regular (\cite{Brehmer1961}, see also \cite{VernBook} for an alternative proof using $C^*$-algebra and completely positive maps). Fuller \cite[Theorem 2.4]{Fuller2013} proved that a doubly commuting representation of $\oplus S_i$ is always regular, where $S_i$ is a countable additive subgroup of $\mathbb{R}^+$. We are now going to extend all these results to direct sums of right LCM semigroups. 

\begin{lemma}\label{lm.gp.doublyComm} Fix a finite subset $W\subset V$ and for each $w\in W$, $F_w\subset P_w$ is a finite subset. Let $F=\bigcup_{w\in W} F_w$. Let $T:P_\Gamma\to\bh{H}$ be a doubly commuting contractive representation and $T_v:P_v\to\bh{H}$ be the representation by restricting $T$ on $P_v$. Then $$TT^*(\vee F)=\prod_{w\in W} T_w T_w^*(\vee F_w)$$.

Moreover, let
\begin{align*}
Z(F) &= \sum_{U\subseteq F} (-1)^{|U|} TT^*(\vee U),\\
Z_w(F_w) &= \sum_{U_w \subseteq F_w} (-1)^{|U_w|} T_wT_w^*(\vee U_w).
\end{align*}
Then $\{Z_w(F_w)\}_{w\in W}$ is a collection of commuting operators, and $$Z(F)=\prod_{w\in W} Z_w(F_w).$$
\end{lemma}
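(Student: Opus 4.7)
The plan is to reduce both identities to a clean computation of $\vee F$ as a product of representatives, then exploit the doubly commuting hypothesis. Since $\Gamma$ is the complete graph here, the finite set $W$ is automatically a clique, and elements living in distinct $P_w$'s all commute inside $P_\Gamma$.

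First I would establish the single identity $TT^*(\vee F) = \prod_{w \in W} T_w T_w^*(\vee F_w)$. By Lemma \ref{lm.union} applied iteratively, $\vee F = \bigvee_{w \in W}(\vee F_w)$. If every $\vee F_w$ is nonempty, pick any $s_w \in \vee F_w$; since the $s_w$ lie in distinct $P_w$ and $W$ is a clique, Lemma \ref{lm.gp.clique} gives $\prod_{w \in W} s_w \in \vee\{s_w : w \in W\} = \vee F$. The doubly commuting hypothesis on $\{T_w\}$ then yields
\[
TT^*(\vee F) = T\Big(\prod_w s_w\Big) T\Big(\prod_w s_w\Big)^* = \prod_{w \in W} T_w(s_w)T_w(s_w)^* = \prod_{w \in W} T_w T_w^*(\vee F_w),
\]
where the reordering inside the product is legal because operators indexed by distinct $w$'s commute with each other and with each other's adjoints. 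If some $\vee F_w = \emptyset$, then by Lemma \ref{lm.gp2} already $\vee F = \emptyset$, and the convention $TT^*(\emptyset) = 0$ makes both sides zero, so the identity still holds.

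Second I would use the set-theoretic fact that every $U \subseteq F$ decomposes uniquely as a disjoint union $U = \bigsqcup_{w \in W} U_w$ with $U_w = U \cap F_w \subseteq F_w$. Hence $|U| = \sum_w |U_w|$, so $(-1)^{|U|} = \prod_w (-1)^{|U_w|}$, and by the first identity $TT^*(\vee U) = \prod_w T_w T_w^*(\vee U_w)$. Summing over $U \subseteq F$ amounts to summing independently over each $U_w \subseteq F_w$, so
\[
Z(F) = \sum_{(U_w)_{w \in W}} \prod_{w \in W} (-1)^{|U_w|} T_w T_w^*(\vee U_w) = \prod_{w \in W} \sum_{U_w \subseteq F_w} (-1)^{|U_w|} T_w T_w^*(\vee U_w) = \prod_{w \in W} Z_w(F_w).
\]
The interchange of the sum and the product requires that the factors commute across different $w$: but each $Z_w(F_w)$ is an integer combination of operators of the form $T_w T_w^*(s)$ with $s \in P_w$, and by the doubly commuting hypothesis any such operator commutes with every $T_{w'}T_{w'}^*(s')$ for $w' \neq w$. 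This simultaneously verifies that $\{Z_w(F_w)\}_{w \in W}$ is a commuting family.

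The only delicate point is the bookkeeping for the degenerate case $\vee F_w = \emptyset$, and confirming that the convention $TT^*(\emptyset) = 0$ propagates correctly through both the product and the alternating sum. Once that is handled, the proof is essentially the factorization of a product of sums into a sum of products, powered by Lemma \ref{lm.gp.clique} to identify least common multiples of commuting elements with their ordinary product.
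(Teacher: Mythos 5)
Your proposal is correct and follows essentially the same route as the paper's own proof: Lemma \ref{lm.union} plus Lemma \ref{lm.gp.clique} to realize $\vee F$ as the product $\prod_{w}s_w$ of representatives, the doubly commuting hypothesis to factor $TT^*(\vee F)$, and the disjoint decomposition $U=\bigsqcup_w(U\cap F_w)$ to turn $Z(F)$ into a product of the $Z_w(F_w)$. Your explicit treatment of the degenerate case $\vee F_w=\emptyset$ is a small point the paper leaves implicit, but it changes nothing substantive.
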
 

\begin{proof} For each $w\in W$, pick $p_w\in \vee F_w$. By Lemma \ref{lm.union}, $\vee F=\vee\{p_w\}_{w\in W}$. By Lemma \ref{lm.gp.clique}, $$\prod_{w\in W} p_w\in\vee\{p_w\}_{w\in W}=\vee F.$$
Hence,
\begin{align*}
TT^*(\vee F) &= TT^*(\prod_{w\in W} p_w) \\
&= \big(\prod_{w\in W} T_w(p_w)\big)\big(\prod_{w\in W} T_w(p_w)^*\big) \\
&= \prod_{w\in W} T_w(p_w) T_w(p_w)^*.
\end{align*}

Now for each $U\subseteq F$, let $U_w=U\cap F_w$ be disjoint subsets. Then, $$(-1)^{|U|} TT^*(\vee U)=\prod_{w\in W} (-1)^{|U_w|} T_wT_w^*(\vee U_w).$$

It is now easy to check $Z(F)=\prod_{w\in W} Z_w(F_w)$. 
\end{proof} 

As a direct consequence of Lemma \ref{lm.gp.doublyComm} and Theorem \ref{thm.gp.main}:

\begin{theorem} Let $T:P_\Gamma\to\bh{H}$ be a doubly commuting contractive representation of $P_\Gamma$ and $T_v:P_v\to\bh{H}$ be the representation by restricting $T$ on $P_v$. Then $T$ has a $\ast$-regular dilation if and only if each $T_v$ has a $\ast$-regular dilation as a representation of $P_v$.
\end{theorem}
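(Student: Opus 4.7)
The plan is to deduce the theorem as a direct consequence of Theorem~\ref{thm.gp.main} together with the multiplicative factorization of Lemma~\ref{lm.gp.doublyComm}. Throughout, let $Z(F)$ denote the Brehmer operator for $T$ on a finite $F\subset P_\Gamma$, and $Z_v(F_v)$ the analogue for $T_v$ and $F_v\subset P_v$.

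For the forward implication, I will fix $v\in V$ and any finite $F_v\subset P_v$. If $e\in F_v$, Lemma~\ref{lm.base}\pref{lm.base.2} gives $Z_v(F_v)=0$; otherwise, viewing $F_v$ as a subset of $\bigcup_{u\in V}P_u$ with only the vertex $v$ contributing, Lemma~\ref{lm.gp.doublyComm} with $W=\{v\}$ collapses to $Z(F_v)=Z_v(F_v)$, and Theorem~\ref{thm.gp.main} gives $Z_v(F_v)=Z(F_v)\geq 0$. Since $F_v$ was arbitrary, Theorem~\ref{thm.main} applied to $T_v:P_v\to\bh{H}$ shows that $T_v$ has a $\ast$-regular dilation.

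For the converse, I will verify condition~(4) of Theorem~\ref{thm.gp.main}: for every finite $e\notin F\subset\bigcup_{v\in V}P_v$, $Z(F)\geq 0$. Setting $W=\{w\in V:F\cap P_w\neq\emptyset\}$ and $F_w=F\cap P_w$ yields the disjoint decomposition $F=\bigsqcup_{w\in W}F_w$; since $\Gamma$ is complete the clique hypothesis in~(4) is automatic. Lemma~\ref{lm.gp.doublyComm} then gives
\[
Z(F)=\prod_{w\in W}Z_w(F_w),
\]
and by hypothesis combined with Theorem~\ref{thm.main}, each $Z_w(F_w)\geq 0$.

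The one step that requires care is that this product of positive operators is itself positive. Each $Z_w(F_w)$ is a real polynomial in the operators $T_w(p)T_w(p)^*$, $p\in P_w$, and is therefore self-adjoint. The doubly commuting hypothesis says that for $w\neq w'$ every operator in $\{T_w(p),T_w(p)^*:p\in P_w\}$ commutes with every operator in $\{T_{w'}(q),T_{w'}(q)^*:q\in P_{w'}\}$, so the family $\{Z_w(F_w)\}_{w\in W}$ is pairwise commuting. A finite product of pairwise commuting positive operators is positive (iterating $AB=A^{1/2}BA^{1/2}\geq 0$ for commuting $A,B\geq 0$), which closes the argument. The only substantive content beyond bookkeeping is this commutativity deduction from the doubly commuting hypothesis.
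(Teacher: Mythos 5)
Your proposal is correct and follows exactly the route the paper intends: the paper states the theorem as a direct consequence of Lemma \ref{lm.gp.doublyComm} and Theorem \ref{thm.gp.main}, and you have simply filled in the bookkeeping, including the (correct) observation that a finite product of pairwise commuting positive operators is positive, which is the implicit step behind the paper's one-line deduction.
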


\bibliographystyle{abbrv}
\bibliography{DilnQuasiLatNew1}

\end{document}